\documentclass[12pt]{amsart}
\usepackage[centertags]{amsmath}
\usepackage{amscd}
\usepackage{amsfonts}
\usepackage{amssymb}
\usepackage{amsthm}
\usepackage{newlfont}
\usepackage{mathrsfs}
\usepackage[T1]{fontenc}
\usepackage{anysize}

\usepackage[margin=2cm]{geometry}
%\usepackage[small,nohug,heads=littlevee]{diagrams}
%\diagramstyle[labelstyle=\scriptstyle]
\setlength{\unitlength}{2mm}
\setlength{\parskip}{0.3cm}
\setcounter{secnumdepth}{5}

\theoremstyle{plain}
\newtheorem{thm}{Theorem}[section]
\newtheorem{lem}[thm]{Lemma}
\newtheorem{prop}[thm]{Proposition}
\newtheorem{cor}[thm]{Corollary}

\newcommand{\thmref}[1]{Theorem~\ref{#1}}
\newcommand{\lemref}[1]{Lemma~\ref{#1}}

\newtheorem{rmk}[thm]{Remark}
\theoremstyle{definition}

\theoremstyle{remark}
\newtheorem{rem}{Remark}[section]
\theoremstyle{remark}

\numberwithin{equation}{section}

\newcommand{\x}{\textbf}
\newcommand{\q}{\quad}
\newcommand{\qq}{\qquad}
\newcommand{\mbb}{\mathbb}
\newcommand{\p}{\prime}
\newcommand{\mc}{\mathcal}
\newcommand{\mrm}{\mathrm}

\begin{document}

\title{Note on Hermitian Jacobi forms}

\author{SOUMYA DAS}
\address{Harish Chandra Research Institute\\ 
         Chhatnag Road\\  
         Jhusi Allahabad 211019, India.}
\email{somu@hri.res.in, soumya.u2k@gmail.com}
%\date{\today}
\subjclass[2000]{Primary 11F50; Secondary 11F55}
\keywords{Hermitian Jacobi forms, Restriction maps}
%\thanks{The author wishes to thank Prof. B. Ramakrishnan for his support and encouragement.}

\begin{abstract}
We compare the spaces of Hermitian Jacobi forms (HJF) of weight $k$ and indices $1,2$ with classical Jacobi forms (JF) of weight $k$ and indices $1,2,4$. Using the embedding into JF, upper bounds for the order of vanishing of HJF at the origin is obtained. We compute the rank of HJF as a module over elliptic modular forms and prove the algebraic independence of the generators in case of index $1$. Some related questions are discussed.
\end{abstract}

\maketitle

\section{Introduction}

Hermitian Jacobi forms of integer weight and index are defined for the Jacobi group over the ring of integers $\mc{O}_{K}$ of an imaginary quadratic field $K$. They were defined and studied by K. Haverkamp in \cite{haverkamp/en}. In \cite{somu} differential operators were constructed from the Taylor expansion of Hermitian Jacobi forms in analogy to that for classical Jacobi forms in \cite{zagier} and a certain subspace of Hermitian Jacobi forms was realized as a subspace of a direct product of elliptic modular forms for the full modular group. The structural properties of index $1$ froms were treated in \cite{sasaki}.

In this paper we treat classical Jacobi forms as an intermediate space between Hermitian Jacobi forms and elliptic modular forms. We present some of the structural properties of index $2$ forms using the restriction maps $\pi_{\rho} \colon J_{k,m}(\mathcal{O}_{K} ) \rightarrow J_{k, N(\rho)m}$ defined by  $ \pi_{\rho} \phi(\tau,z_{1},z_{2}) = \phi(\tau,\rho z,\bar{\rho} z)$ ($\rho \in \mc{O}_{K}$, see \cite{haverkamp/en}). Since we do not have (at present) the order of vanishing of a Hermitian Jacobi forms at the origin (which is known to be $2m$, $m$ being the index; in the case of classical Jacobi forms), computations involving the Taylor expansions is not very fruitful for $m \geq 2$.

The main results are in section~\ref{index1},~\ref{index2} and~\ref{rank}. The purpose of this note is to look at the structure of index $2$ forms by comparing them with classical Jacobi forms. We also compute the rank of index $m$ forms of weight a multiple of $2$ and $4$ (denoted as $J_{n*,m}(\mathcal{O}_{K}), \, n=2,4$) as a module over the algebra of elliptic modular forms. Unlike the classical Jacobi forms, the number of homogeneous products of degree $m$ of the index $1$ generators is less than the rank. In the final section, we discuss several related questions on Hermitian Jacobi forms.

\section{Notations and definitions}  

\subsection{Hermitian Jacobi forms}

Let $\mathcal{H}$ be the upper half plane. Let $K = \mbb{Q}(i)$ and $\mathcal{O}_{K} = \mbb{Z}[i]$ be it's ring of integers.
Let $\Gamma_{1}(\mathcal{O}_{K} ) = \left \{ \epsilon M \mid M \in SL(2,\mbb{Z}) , \epsilon \in \mathcal{O}^{\times}_{K} \right \}$. The Jacobi group over $\mathcal{O}_{K}$ is $\Gamma^{J}(\mathcal{O}_{K} ) = \Gamma_{1}(\mathcal{O}_{K} ) \ltimes \mathcal{O}_{K}^{2}.$ \\
The space of Hermitian Jacobi forms for $\Gamma^{J}(\mathcal{O}_{K} )$ of weight $k$ and index $m$, where $k$, $m$ are positive  integers, consists of holomorphic functions $\Phi$ on $ \mathcal{H} \times \mbb{C}^{2}$ satisfying : 
\begin{eqnarray}
\numberwithin{equation}{section}
\label{jacobi1} \q \phi(\tau,z_{1},z_{2}) = \phi|_{k,m} \epsilon M(\tau,z_{1},z_{2}) := \epsilon^{-k}(c\tau + d)^{-k} e^{\frac{-2\pi i m c z_{1}z_{2}}{c\tau + d}} \phi \left(M\tau, \frac{\epsilon z_{1}}{c\tau + d},\frac{\bar{\epsilon} z_{2}}{c\tau + d}\right) \\
\mbox{ for all }  M = \left( \begin{smallmatrix}
a & b \\
c & d  \end{smallmatrix}\right) \mbox{ in } SL(2,\mbb{Z}) \nonumber \\
\label{jacobi2} \phi(\tau,z_{1},z_{2}) = \phi|_{k,m}[\lambda,\mu] := e^{2 \pi i m (N(\lambda)\tau + \bar{\lambda}z_{1}+\lambda z_{2})}
\phi (\tau,z_{1} + \lambda \tau + \mu, z_{2} + \bar{\lambda} \tau + \bar{\mu}) \\
\mbox{ for all }  \lambda , \mu \mbox{ in }  \mathcal{O}_{K}. \nonumber 
\end{eqnarray}

The (finite dimensional) complex vector space of Hermitian Jacobi forms of weight $k$ and index $m$ is denoted by $J_{k,m}(\mathcal{O}_{K} ).$ Such a form has a Fourier expansion : 
\begin{eqnarray}  \label{fourier}
\label{fourier} \phi(\tau,z_{1},z_{2}) = \sum_{n = 0}^{\infty} \underset{\underset{nm \geq N(r)}{r \in \mathcal{O}_{K}^{\sharp}}}\sum c_{\Phi}(n , r) e^{ 2 \pi i  \left( n \tau + r z_{1} + \bar{r} z_{2} \right)}    
\end{eqnarray}
where $\mathcal{O}_{K}^{\sharp} = \frac{i}{2} \mathcal{O}_{K}$ \q (the inverse different of\q  $K|\mbb{Q}$) and $N \colon \mc{K} \rightarrow \mbb{Q}$ is the norm map.

$\phi$ is called a Jacobi \textit{cusp} form if $ c_{\phi}(n , r) = 0$ for $nm = N(r)$. The space of Jacobi cusp forms of weight $k$ and index $m$ is denoted $J_{k,m}^{cusp}(\mathcal{O}\sb{K} )$.   

Further, we denote the space of Jacobi forms of weight $k$ and index $m$ for the Jacobi group $SL(2,\mbb{Z}) \ltimes \mbb{Z}^{2}$ by $J_{k,m}$, elliptic modular(cusp) forms of weight $k$ for $SL(2,\mbb{Z})$ by $M_{k}$(resp. $S_{k}$).
We let $e(z) := e^{2 \pi i z}$ unless otherwise mentioned. In the rest of the paper we will use the standard notation $T := \left( \begin{smallmatrix}
1 & 1 \\
0 & 1  \end{smallmatrix}\right)$ and $S := \left( \begin{smallmatrix}
0 & -1 \\
1 & 0  \end{smallmatrix}\right)$.

\subsection{Theta Decomposition} 
Hermitian Jacobi forms admit a Theta decomposition analogous to classical jacobi forms. Let $\phi \in J_{k,m}(\mathcal{O}\sb{K} )$ has the Fourier expansion \ref{fourier}. Then we have the \texttt{theta decomposition}
\begin{align} \label{thetadecomposition}
\noindent \phi(\tau,z_{1},z_{2}) = \underset{s \in \mathcal{O}_{K}^{\sharp}/m \mathcal{O}_{K}}\sum h_{s}(\tau) \cdot \theta^{H}_{m,s}(\tau,z_{1},z_{2}), \q \mbox{ where } \nonumber \\
 h_{s}(\tau) := \underset{N(s)+L/4 \in m \mbb{Z}}{\sum_{L=0}^{\infty}} c_{s}(L) e^{ \frac{2 \pi i L \tau}{4m}} \mbox{ and } \theta^{H}_{m,s}(\tau,z_{1},z_{2}):= \underset{r \equiv s(mod \, m\mathcal{O}_{K})}\sum e \left(\frac{N(r)}{m}\tau + rz_{1}+\bar{r}z_{2} \right).  \nonumber \\
\end{align}

The Theta components of $h_{s}$ of $ \phi \in J_{k,m}(\mathcal{O}\sb{K} )$ (see \cite[p.46,47]{haverkamp}) have the following transformation properties under $SL(2,\mbb{Z})$ and $\mc{O}_{K}^{\times}$:
\begin{align}
& h_{s} \mid_{k-1} T = e^{ - 2\pi i N(s)/m} h_{s} \label{2.5}\\
& h_{s} \mid_{k-1} S = \frac{i}{4m} \underset{s^{\p} \in \mathcal{O}_{K}^{\sharp}/m \mathcal{O}_{K}}\sum e^{- 4 \pi i \mathrm{Re}(\bar{s} s^{\p}) /m} h_{s^{\p}} \label{2.6} \\
& h_{s} \mid_{k-1} \epsilon I = \epsilon h_{\epsilon s}, \qq  \epsilon \in \mc{O}_{K}^{\times}. \label{units}
\end{align}

\section{\x{Comparision of $J_{k,1}$ and $J_{k,1}(\mathcal{O}_{K})$}} \label{index1}
As a set of representatives of $\mathcal{O}_{K}^{\sharp}$ in $\mathcal{O}_{K}^{\sharp}/ \mathcal{O}_{K}$ ($\cong \frac{\mbb{Z}}{2 \mbb{Z}} \times \frac{\mbb{Z}}{2 \mbb{Z}}$) we take $ \mc{S}_{1} := \left \{ 0, \frac{i}{2}, \frac{1}{2}, \frac{1+i}{2} \right \}$. In this section we denote the corresponding Theta components by $h_{i,j}$ and the Hermitian Theta functions of index $1$ by $\theta^{H}_{i,j}$, where $\left \{ i,j\right \} \in \{ 0,1 \}$. We denote the Jacobi Theta functions of index $1$ by $\theta_{1,0}(\tau,z)$, $\theta_{1,1}(\tau,z)$. Further we let \[ \vartheta_{0}(\tau) = \underset{r \in \mbb{Z}}\sum e \left(r^{2}\tau \right), \qq  \vartheta_{1}(\tau) = \underset{r \equiv 1\pmod{2}}\sum  e\left(\frac{r^{2}}{4}\tau \right) \q (\tau \in \mc{H}) \]

\subsection{The case $k \equiv 2 \pmod{4}$}

\begin{thm} \label{2mod4}
\begin{enumerate}

\item

Let $k \equiv 2 \pmod{4}$. Then there is an exact sequence of vector spaces
\begin{equation} \label{2mod4eqn} 0 \longrightarrow J_{k,1}(\mathcal{O}_{K}) \overset{\pi_{1}}\longrightarrow J_{k,1} \overset{D_{0}}\longrightarrow M_{k} \longrightarrow 0 \end{equation}
where $D_{0}$ denotes the restriction to modular forms $\phi(\tau,z) \mapsto \phi(\tau,0)$.

\item
Let $k \equiv 2 \pmod{4}$. Then  $\pi_{1+i}$ is the zero map.
\end{enumerate}
\end{thm}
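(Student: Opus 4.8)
The plan is to read everything off the theta decomposition \eqref{thetadecomposition} of index one forms together with the unit relation \eqref{units}, and then to compute the restrictions of the four index one Hermitian theta series $\theta^{H}_{1,s}$, $s\in\mc{S}_{1}$, along $z_{1}=z_{2}$ and along $z_{1}=(1+i)z,\ z_{2}=(1-i)z$.

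\emph{Part (1): reduction to one theta component.} First I would apply \eqref{units} with $\epsilon=i$, which gives $h_{\epsilon s}=\epsilon^{-k}h_{s}$; since $k\equiv 2\pmod 4$ we have $i^{-k}=-1$, so $h_{is}=-h_{s}$ for every $s$. Because $i\cdot 0\equiv 0$ and $i\cdot\tfrac{1+i}{2}\equiv\tfrac{1+i}{2}\pmod{\mc{O}_{K}}$, this forces $h_{0}=h_{(1+i)/2}=0$; because $i\cdot\tfrac{i}{2}\equiv\tfrac12\pmod{\mc{O}_{K}}$, it forces $h_{1/2}=-h_{i/2}$. Hence every $\phi\in J_{k,1}(\mc{O}_{K})$ has the form $\phi=h\,(\theta^{H}_{1,i/2}-\theta^{H}_{1,1/2})$ with $h:=h_{i/2}$. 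Next, writing $r=a+bi$ and splitting the theta sums over $a$ and $b$, one finds $\theta^{H}_{1,i/2}(\tau,z,z)=\vartheta_{1}(\tau)\theta_{1,0}(\tau,z)$ and $\theta^{H}_{1,1/2}(\tau,z,z)=\vartheta_{0}(\tau)\theta_{1,1}(\tau,z)$, so that
\[ \pi_{1}\phi=h\,\bigl(\vartheta_{1}\theta_{1,0}-\vartheta_{0}\theta_{1,1}\bigr), \qquad \phi(\tau,0,0)=h\,\bigl(\vartheta_{1}\vartheta_{0}-\vartheta_{0}\vartheta_{1}\bigr)=0 . \]
As $\theta_{1,0},\theta_{1,1}$ are linearly independent over meromorphic functions of $\tau$ and $\vartheta_{0},\vartheta_{1}\not\equiv 0$, the first identity shows $\pi_{1}$ is injective, and the second gives $D_{0}\circ\pi_{1}=0$, i.e.\ $\mrm{Im}\,\pi_{1}\subseteq\ker D_{0}$.

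\emph{Surjectivity of $D_{0}$ and exactness in the middle.} Surjectivity is elementary: $D_{0}E_{k,1}=E_{k}$, and for $g\in M_{k-12}$ the form $D_{0}(g\,\phi_{12,1})$ is a nonzero scalar multiple of $g\Delta$, so $\mrm{Im}\,D_{0}\supseteq\mbb{C}E_{k}+\Delta M_{k-12}=\mbb{C}E_{k}+S_{k}=M_{k}$. For the remaining inclusion $\ker D_{0}\subseteq\mrm{Im}\,\pi_{1}$ I would proceed constructively: given $\psi\in J_{k,1}$ with theta decomposition $\psi=g_{0}\theta_{1,0}+g_{1}\theta_{1,1}$, the hypothesis $\psi(\tau,0)=g_{0}\vartheta_{0}+g_{1}\vartheta_{1}=0$ allows one to set $h:=g_{0}/\vartheta_{1}=-g_{1}/\vartheta_{0}$ (the two expressions agreeing by that relation). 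One then checks that $h$ is holomorphic on $\mc{H}$ (since $\vartheta_{0},\vartheta_{1}$ do not vanish there) and holomorphic at the cusp (using the second expression $-g_{1}/\vartheta_{0}$), and that the assignment $h_{i/2}:=h$, $h_{1/2}:=-h$, $h_{0}:=h_{(1+i)/2}:=0$ satisfies \eqref{2.5}, \eqref{2.6} and \eqref{units}; this reduces to the known transformation behaviour of $\vartheta_{0},\vartheta_{1}$ and of the Jacobi theta components $g_{0},g_{1}$ under $SL(2,\mbb{Z})$. Then $\phi:=h\,(\theta^{H}_{1,i/2}-\theta^{H}_{1,1/2})$ lies in $J_{k,1}(\mc{O}_{K})$ and $\pi_{1}\phi=\psi$ by construction. (Alternatively one may finish by the dimension count $\dim J_{k,1}(\mc{O}_{K})\le\dim\ker D_{0}=\dim J_{k,1}-\dim M_{k}=\dim S_{k+2}$, combined with $\dim J_{k,1}(\mc{O}_{K})=\dim S_{k+2}$ for $k\equiv 2\pmod 4$.) I expect the verification of \eqref{2.6} for $h=g_{0}/\vartheta_{1}$ — equivalently, the matching of the relevant multiplier systems — to be the main obstacle.

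\emph{Part (2).} This is a short consequence of the normal form above. Substituting $z_{1}=(1+i)z$, $z_{2}=\overline{(1+i)}z=(1-i)z$ into $\phi=h\,(\theta^{H}_{1,i/2}-\theta^{H}_{1,1/2})$ and using $r(1+i)+\bar r(1-i)=2(a-b)$ for $r=a+bi$, one gets
\[ \theta^{H}_{1,i/2}\bigl(\tau,(1+i)z,(1-i)z\bigr)=\underset{a\in\mbb{Z},\,b\in\frac12+\mbb{Z}}{\sum}e\bigl((a^{2}+b^{2})\tau+2(a-b)z\bigr), \]
while the corresponding sum for $\theta^{H}_{1,1/2}$ runs over $a\in\tfrac12+\mbb{Z}$, $b\in\mbb{Z}$; relabelling $(a,b)\mapsto(b,a)$ identifies the latter with the former after $z\mapsto -z$. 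Hence $\pi_{1+i}\phi$ is an odd function of $z$. But $\pi_{1+i}\phi\in J_{k,N(1+i)\cdot 1}=J_{k,2}$ by \cite{haverkamp/en}, and a classical Jacobi form of even weight $k$ is even in $z$ (apply $-I\in SL(2,\mbb{Z})$). Therefore $\pi_{1+i}\phi\equiv 0$.
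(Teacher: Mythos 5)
Your proposal is correct in outline and, for part (1), follows essentially the paper's route: the unit relation \eqref{units} kills $h_{0}$ and $h_{(1+i)/2}$ and forces $h_{1/2}=-h_{i/2}$, whence $\pi_{1}\phi=h\,(\vartheta_{1}\theta_{1,0}-\vartheta_{0}\theta_{1,1})$, injectivity, and $\mrm{Im}\,\pi_{1}\subseteq\ker D_{0}$ exactly as in the text. Your surjectivity argument for $D_{0}$ (via $E_{k,1}$ and $g\,\phi_{12,1}$, $g\in M_{k-12}$) is more elementary than the paper's appeal to the isomorphism $D_{0}+D_{2}$ or to Haverkamp's dimension formulas, and it works. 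Part (2) is where you genuinely diverge, to good effect: the paper writes out the full theta decomposition of $\pi_{1+i}\phi$ and checks that each coefficient vanishes, whereas your observation that $\pi_{1+i}\phi$ is odd in $z$ while every element of $J_{k,2}$ of even weight is even in $z$ (apply $-I$) is shorter, cleaner, and correct.

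The one real gap is the step you yourself flag: exactness at $J_{k,1}$. Given $\psi=g_{0}\theta_{1,0}+g_{1}\theta_{1,1}\in\ker D_{0}$ you set $h:=g_{0}/\vartheta_{1}=-g_{1}/\vartheta_{0}$ and assert that the tuple $(0,h,-h,0)$ obeys \eqref{2.5}, \eqref{2.6} and \eqref{units}; but the crux of the whole theorem is precisely that $h$ transforms under all of $SL(2,\mbb{Z})$ in weight $k-1$ with the character $\bar{\omega}$ --- a priori $g_{0},g_{1},\vartheta_{0},\vartheta_{1}$ are only $\Gamma(4)$-type data and the components mix under $S$, so the multiplier systems must actually be matched. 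This is exactly the content of \cite[Theorem 1]{bocherer}, which the paper invokes at this point ($\ker D_{0}=M_{k-1}(SL(2,\mbb{Z}),\bar{\omega})\cdot(\vartheta_{1}\theta_{1,0}-\vartheta_{0}\theta_{1,1})$); either cite it or carry out the computation, since as written the hardest step is left as ``one checks''. Two smaller points: (i) having produced $h\in M_{k-1}(SL(2,\mbb{Z}),\bar{\omega})$ you still need the converse of the theta decomposition (a tuple of components with the transformation behaviour \eqref{2.5}--\eqref{units} assembles into an element of $J_{k,1}(\mc{O}_{K})$) to conclude $\psi\in\mrm{Im}\,\pi_{1}$; the paper relies on such a statement and you should invoke it explicitly. (ii) Your parenthetical dimension-count alternative is circular as stated: $\dim J_{k,1}(\mc{O}_{K})=\dim S_{k+2}$ is derived in the paper \emph{from} this theorem, so to use it as input you would have to establish it independently from Haverkamp's dimension formulas, as the paper does when proving surjectivity of $D_{0}$.
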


\begin{proof}
$1$. \, Let $\phi \in J_{k,1}(\mathcal{O}_{K})$. From the Theta decomposition~(\ref{thetadecomposition}) and the fact that when $k \equiv 2 \pmod{4}$, $h_{0,0} = h_{1,1} = 0$ and $h_{0,1} = - h_{1,0}$ (follows from equation~(\ref{units})) we get that \[ \pi_{1} \phi = h_{0,1} \left( \vartheta_{1} \theta_{1,0} - \vartheta_{0} \theta_{1,1} \right). \]
Since $ \vartheta_{1} \theta_{0} - \vartheta_{0} \theta_{1}  \not \equiv 0$ (see \cite{bocherer}), we clearly have that $\pi_{1}$ is injective and $\mathrm{Im}( \pi_{1}) \subseteq \ker{D_{0}}$.

Let $\phi \in \ker{D_{0}}$. From \cite[Theorem 1]{bocherer} we see that $\phi(\tau,z) = \varphi(\tau) \left( \vartheta_{1} \theta_{1,0} - \vartheta_{0} \theta_{1,1} \right)$, where $\varphi \in M_{k-1}(SL(2,\mbb{Z}),\bar{\omega})$ which consists of holomorphic functions $f \colon \mc{H} \rightarrow \mbb{C}$ bounded at infinity and satisfying  $f \mid_{k-1} S = \bar{\omega}(S) f, \, f \mid_{k-1} T = \bar{\omega}(T) f$ (also see \cite{bocherer}). $\omega$ is the linear character of $SL(2,\mbb{Z})$ defined by $\omega (T ) = i, \, \omega (S)  = i$.

So, we only need to check if $\phi \in J_{k,1}(\mathcal{O}_{K})$, then $h_{0,1} \in  M_{k-1}(SL(2,\mbb{Z}),\bar{\omega})$. We already know $h_{0,1}$ is in $M_{k-1}(\left( \Gamma(4)\right)$, so it suffices to check it has the right transformation properties under $SL(2,\mbb{Z})$. From~(\ref{2.5}),~(\ref{2.6})  we have \[  h_{0,1} \mid_{k-1} T  = e^{-2 \pi i N(i/2)} h_{0,1}, \q h_{0,1} \mid_{k-1} S  = \frac{ i}{2} \underset{s \in \mathcal{O}_{K}^{\sharp}/ \mathcal{O}_{K}}\sum e^{- 4 \pi i \mathrm{Re}(- is/2)} h_{s} \]
which give $h_{0,1} \mid_{k-1} T  = - i h_{0,1}(\tau)$ and $h_{0,1}\mid_{k-1} S  = - i h_{0,1}(\tau)$, as desired.

Finally $D_{0}$ is surjective. This is well known in view of the isomorphism $D_{0} + D_{2} \colon J_{k,1} \rightarrow M_{k} \oplus S_{k+2}$, where $D_{2}$ is defined in Corollary \ref{Sk2}. Another way to see this, using Hermitian Jacobi forms is as follows. Let $V := \mathrm{Im}(D_{0})$. Then we have the exact sequence \[ 0 \longrightarrow J_{k,1}(\mathcal{O}_{K}) \overset{\pi_{1}}\longrightarrow J_{k,1} \overset{D_{0}}\longrightarrow V \longrightarrow 0 \]
Therefore $\dim{V} = \dim{J_{k,1}} - \dim{J_{k,1}(\mc{O}_{K})} =  \dim{M_{k}}$. The last equality can be seen as follows. First let $k >4$. Then from \cite[p.25]{haverkamp} we easily compute when $k \equiv 2 \pmod{4}$ that \[ \dim{J_{k,1}^{Eis}(\mathcal{O}_{K})} =0 \q \mbox{ and from \cite[p.93]{haverkamp} we have } \q \dim{J_{k,1}^{cusp}(\mathcal{O}_{K})} = \left[ \frac{k+2}{12} \right] \] Since $J_{k,1}(\mathcal{O}_{K}) = J_{k,1}^{Eis}(\mathcal{O}_{K}) \oplus J_{k,1}^{cusp}(\mathcal{O}_{K})$ (see \cite{haverkamp}), we get the desired equality of dimensions. When $k = 2$, $J_{k,1}=0$ and hence so is it's subspace $J_{k,1}(\mathcal{O}_{K})$. This shows $V = M_{k}$ and completes the exactness of the sequence~(\ref{2mod4eqn}).  

$2$. \, Let $\phi \in J_{k,1}(\mathcal{O}_{K})$ have the Theta decomposition~(\ref{thetadecomposition}). From Lemma~\ref{Utheta}, we write down the Theta decomposition of $\pi_{1+i} \phi $:
\begin{align*}
\pi_{1+i} \phi &= (h_{0,0} a_{0} + h_{1,1} a_{2} ) \theta_{2,0} + (h_{1,0} a_{1} + h_{0,1} a_{3} ) \theta_{2,1} \\
& + (h_{0,0} a_{2} + h_{1,1} a_{0} ) \theta_{2,2} + (h_{0,1} a_{0} + h_{1,0} a_{3} ) \theta_{2,3},
\end{align*}
where $\theta_{2,\mu} := \theta_{2,\mu}(\tau,z), \, (\mu \in \mbb{Z} / 4 \mbb{Z} )$ are the Jacobi Theta functions of index $2$ and $a_{\mu} := \theta_{2,\mu}(\tau,0)$ (note that $a_{1} = a_{3}$). Since $h_{0,0}=h_{1,1}=0$ and $h_{0,1}+h_{1,0}=0$, when $k \equiv 2 \pmod{4}$ the Proposition follows.
\end{proof}

From the above Theorem and the results of \cite{bocherer} we get an isomorphism of $J_{k,1}(\mathcal{O}_{K})$ with $S_{k+2}$, which was also obtained by R. Sasaki in \cite{sasaki}.

\begin{cor}
Let $k \equiv 2 \pmod{4}$. Then $J_{k,1}(\mathcal{O}_{K}) \cong M_{k-1}(SL(2,\mbb{Z}),\bar{\omega})$.
\end{cor}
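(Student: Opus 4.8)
The plan is to assemble the isomorphism from two ingredients already in hand: the exact sequence~(\ref{2mod4eqn}) of \thmref{2mod4} and B\"ocherer's description of $\ker D_{0}$. First, since $D_{0} \circ \pi_{1} = 0$, the map $\pi_{1}$ is injective, and its image is precisely $\ker D_{0}$, it follows that $\pi_{1}$ restricts to a linear isomorphism $J_{k,1}(\mc{O}_{K}) \xrightarrow{\ \sim\ } \ker D_{0} \subseteq J_{k,1}$.

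Next I would invoke \cite[Theorem 1]{bocherer}: every $\psi \in \ker D_{0}$ has the form $\psi(\tau,z) = \varphi(\tau)\bigl(\vartheta_{1} \theta_{1,0} - \vartheta_{0} \theta_{1,1}\bigr)$ with $\varphi \in M_{k-1}(SL(2,\mbb{Z}),\bar{\omega})$, and conversely each such $\varphi$ produces an element of $\ker D_{0}$. Because $\vartheta_{1} \theta_{1,0} - \vartheta_{0} \theta_{1,1} \not\equiv 0$, the factor $\varphi$ is uniquely determined by $\psi$, so $\psi \mapsto \varphi$ is a well-defined linear isomorphism $\ker D_{0} \xrightarrow{\ \sim\ } M_{k-1}(SL(2,\mbb{Z}),\bar{\omega})$. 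Composing the two isomorphisms gives $J_{k,1}(\mc{O}_{K}) \cong M_{k-1}(SL(2,\mbb{Z}),\bar{\omega})$.

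To make the statement self-contained I would also record what the composite isomorphism is explicitly. In the proof of \thmref{2mod4} it is shown that for $\phi \in J_{k,1}(\mc{O}_{K})$ with $k \equiv 2 \pmod{4}$ one has $\pi_{1} \phi = h_{0,1}\bigl(\vartheta_{1} \theta_{1,0} - \vartheta_{0} \theta_{1,1}\bigr)$, and that $h_{0,1} \mid_{k-1} T = -i\,h_{0,1}$ and $h_{0,1} \mid_{k-1} S = -i\,h_{0,1}$, i.e. $h_{0,1}$ transforms by $\bar{\omega}$; hence the composite sends $\phi \mapsto h_{0,1}$. I do not expect any genuine obstacle: the injectivity of $\pi_{1}$, the identification $\mathrm{Im}(\pi_{1}) = \ker D_{0}$, and the $\bar{\omega}$-equivariance of $h_{0,1}$ are all contained in \thmref{2mod4} and its proof, so the corollary follows formally. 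The only small point to be careful about is the uniqueness of $\varphi$ above, which is nothing more than the observation that the single function $\vartheta_{1} \theta_{1,0} - \vartheta_{0} \theta_{1,1}$ is not identically zero.
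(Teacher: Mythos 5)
Your proposal is correct and follows essentially the same route as the paper: the paper's proof simply states that $\phi \mapsto h_{0,1}$ gives the isomorphism, citing the proof of Theorem~\ref{2mod4}, and the ingredients you spell out (the identity $\pi_{1}\phi = h_{0,1}(\vartheta_{1}\theta_{1,0}-\vartheta_{0}\theta_{1,1})$, the $\bar{\omega}$-transformation of $h_{0,1}$, the nonvanishing of $\vartheta_{1}\theta_{1,0}-\vartheta_{0}\theta_{1,1}$, and B\"ocherer's characterization of $\ker D_{0}$) are exactly the ones that proof relies on. No issues.
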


\begin{proof}
Let $\phi \in J_{k,1}(\mathcal{O}_{K})$. It follows from the proof of the above Theorem that the map sending $\phi$ to $h_{0,1}$ gives the desired isomorphism.
\end{proof}

\begin{cor} \label{Sk2}
Let $k \equiv 2 \pmod{4}$. Then the composite  
\begin{equation}
J_{k,1}(\mathcal{O}_{K}) \overset{\pi_{1}}\hookrightarrow J_{k,1} \overset{D_{2}}\rightarrow S_{k+2}
\end{equation}
gives an isomorphism from $J_{k,1}(\mathcal{O}_{K})$ with $S_{k+2}$. $ \left( D_{2} = \left( \frac{k}{2 \pi i} \frac{\partial^{2}}{\partial z^{2}} - 2 \frac{\partial}{\partial \tau} \right)_{z=0} \right)$  

\begin{proof}
The result follows from \cite[Theorem 2]{bocherer}, which in the case $N=1$ says that $ D_{2} \colon J_{k,1} \rightarrow S_{k+2}$ gives an isomorphism of $\ker{D_{0}}$ with  \[ S_{k+2}^{\circ}:= \left \{ f \in S_{k+2} \mid \varphi := \frac{f}{\xi} \in M_{k-1}(SL(2,\mbb{Z}), \bar{\omega}) \right \},\] where $\omega$ is defined as above and $\xi = \vartheta_{1}\vartheta^{\p}_{0} - \vartheta_{0}\vartheta^{\p}_{1}$. But $S_{k+2}^{\circ}= S_{k+2}$ when $N=1$, since by \cite[Proposition 2]{bocherer}, $\xi \in S_{3}(SL(2,\mbb{Z}), \omega)$. From equation~(\ref{2mod4eqn}) we have $\mathrm{Im} (\pi_{1}) = \ker{D_{0}}$. Therefore the Corollary follows.
\end{proof}
\end{cor}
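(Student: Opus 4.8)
The plan is to reduce the assertion to a statement purely about classical index~$1$ Jacobi forms and then settle it by a dimension count. By part~(1) of \thmref{2mod4} the map $\pi_{1}$ is injective with image exactly $\ker D_{0}$, so $D_{2}\circ\pi_{1}$ is an isomorphism onto $S_{k+2}$ precisely when $D_{2}$ restricts to an isomorphism $\ker D_{0}\xrightarrow{\ \sim\ }S_{k+2}$. That is the only thing that needs checking, and it takes place entirely inside $J_{k,1}$.

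For injectivity I would use the classical isomorphism $D_{0}\oplus D_{2}\colon J_{k,1}\xrightarrow{\ \sim\ }M_{k}\oplus S_{k+2}$ (this is the fact already invoked in the proof of \thmref{2mod4}). On $\ker D_{0}$ the $D_{0}$-component vanishes identically, so injectivity of $D_{0}\oplus D_{2}$ immediately forces $D_{2}$ to be injective on $\ker D_{0}$; composing with the injection $\pi_{1}$, the map $D_{2}\circ\pi_{1}$ is injective.

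For surjectivity I would compare dimensions. The isomorphism $D_{0}\oplus D_{2}$ gives $\dim J_{k,1}=\dim M_{k}+\dim S_{k+2}$, while exactness of~(\ref{2mod4eqn}) gives $\dim\ker D_{0}=\dim J_{k,1}-\dim M_{k}=\dim S_{k+2}$. An injective linear map between finite-dimensional spaces of equal dimension is onto, so $D_{2}|_{\ker D_{0}}$, hence $D_{2}\circ\pi_{1}$, is an isomorphism onto $S_{k+2}$. Alternatively one can bypass the dimension count and quote B\"ocherer's Theorem~2 directly: it identifies $D_{2}(\ker D_{0})$ with $S_{k+2}^{\circ}=\{f\in S_{k+2}: f/\xi\in M_{k-1}(SL(2,\mbb{Z}),\bar{\omega})\}$, where $\xi=\vartheta_{1}\vartheta_{0}^{\p}-\vartheta_{0}\vartheta_{1}^{\p}$; since $\xi\in S_{3}(SL(2,\mbb{Z}),\omega)$ (B\"ocherer's Proposition~2), the quotient $f/\xi$ automatically has weight $k-1$ and nebentypus $\bar{\omega}$, so $S_{k+2}^{\circ}=S_{k+2}$ in level one.

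I expect surjectivity to be the only genuine obstacle: from the Hermitian side there is no a priori reason that every weight $k+2$ cusp form lies in the image, so one has to import an external input — either the classical structure theorem for $J_{k,1}$ to run the dimension count, or B\"ocherer's verification that the auxiliary condition cutting out $S_{k+2}^{\circ}$ inside $S_{k+2}$ is vacuous for the full modular group. The reduction step and the injectivity are then formal consequences of \thmref{2mod4}.
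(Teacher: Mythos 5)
Your proposal is correct, and its main argument takes a genuinely different route from the paper. The paper proves the corollary by citing B\"ocherer's Theorem~2, which identifies $D_{2}(\ker D_{0})$ with the subspace $S_{k+2}^{\circ}=\{f\in S_{k+2}: f/\xi\in M_{k-1}(SL(2,\mbb{Z}),\bar{\omega})\}$, and then observes that $S_{k+2}^{\circ}=S_{k+2}$ in level one because $\xi\in S_{3}(SL(2,\mbb{Z}),\omega)$; combined with $\mathrm{Im}(\pi_{1})=\ker D_{0}$ from~(\ref{2mod4eqn}), this gives the result. Your primary argument instead stays entirely on the classical side and needs only the Eichler--Zagier isomorphism $D_{0}\oplus D_{2}\colon J_{k,1}\to M_{k}\oplus S_{k+2}$ (already invoked in the proof of Theorem~\ref{2mod4}): under that isomorphism $\ker D_{0}$ is carried onto $\{0\}\oplus S_{k+2}$, which is exactly your injectivity-plus-dimension-count conclusion, and composing with the isomorphism $\pi_{1}\colon J_{k,1}(\mc{O}_{K})\xrightarrow{\sim}\ker D_{0}$ finishes the proof. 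This is more elementary and avoids having to check that the auxiliary condition defining $S_{k+2}^{\circ}$ is vacuous (a point which, as you note, silently uses that $\xi$ is a nonvanishing multiple of $\eta^{6}$ on $\mc{H}$ and that cusp forms vanish to sufficient order at infinity). What the paper's route buys in exchange is robustness: for level $N>1$, where $D_{0}\oplus D_{2}$ is no longer an isomorphism and the dimension count breaks down, B\"ocherer's theorem still identifies the image of $\ker D_{0}$ precisely, which is relevant to the level-$N$ sequence~(\ref{2mod4levelN}) discussed immediately afterwards. Your alternative paragraph reproduces the paper's argument essentially verbatim, so both proofs are in hand.
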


\begin{cor} \label{multiplier}
Let $k \equiv 2 \pmod{4}$. Then multiplication by $\xi$ gives an isomorphism \\ $M_{k-1}(SL(2,\mbb{Z}),\bar{\omega}) \overset{\cong}\longrightarrow S_{k+2}$.
\end{cor}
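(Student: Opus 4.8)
The plan is to recognize this as a repackaging of what was already proved for \corref{Sk2}. Multiplication by $\xi$ does carry $M_{k-1}(SL(2,\mbb{Z}),\bar{\omega})$ into $S_{k+2}$: by \cite[Proposition~2]{bocherer} one has $\xi \in S_{3}(SL(2,\mbb{Z}),\omega)$, so for $g \in M_{k-1}(SL(2,\mbb{Z}),\bar{\omega})$ the product $g\xi$ has weight $(k-1)+3 = k+2$, nebentypus $\bar{\omega}\cdot\omega = 1$, and --- being the product of the cusp form $\xi$ with a form $g$ that is merely bounded at infinity --- again vanishes at the cusp; hence $g\xi \in S_{k+2}$. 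Injectivity is automatic since $\xi \not\equiv 0$ and the ring of holomorphic functions on $\mc{H}$ is an integral domain. For surjectivity I would invoke the computation inside the proof of \corref{Sk2}, where it is shown that $S_{k+2}^{\circ} = S_{k+2}$; by the very definition of $S_{k+2}^{\circ}$ this means $f/\xi \in M_{k-1}(SL(2,\mbb{Z}),\bar{\omega})$ for every $f \in S_{k+2}$, i.e. each such $f$ equals $g\xi$ with $g = f/\xi$ in the source.

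Alternatively, one can bypass \corref{Sk2} and argue through the chain of isomorphisms of the preceding two corollaries. Recall from the proof of \thmref{2mod4} that $\pi_{1}\phi = h_{0,1}(\vartheta_{1}\theta_{1,0} - \vartheta_{0}\theta_{1,1})$ for $\phi \in J_{k,1}(\mc{O}_{K})$, with $\phi \mapsto h_{0,1}$ the isomorphism onto $M_{k-1}(SL(2,\mbb{Z}),\bar{\omega})$. Applying $D_{2}$ to this product, the factor $h_{0,1}$ is independent of $z$ and $\vartheta_{1}\theta_{1,0}-\vartheta_{0}\theta_{1,1}$ vanishes identically in $\tau$ at $z = 0$ (since $\theta_{1,j}(\tau,0) = \vartheta_{j}(\tau)$), so the $\partial_{\tau}$-part of $D_{2}$ contributes nothing and $D_{2}(\pi_{1}\phi) = \frac{k}{2\pi i}\,h_{0,1}\,\partial_{z}^{2}(\vartheta_{1}\theta_{1,0}-\vartheta_{0}\theta_{1,1})|_{z=0}$; the heat equation $\partial_{z}^{2}\theta_{1,j}|_{z=0} = 8\pi i\,\vartheta_{j}'$ then gives $D_{2}(\pi_{1}\phi) = 4k\,h_{0,1}\,\xi$. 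Comparing with the isomorphism $D_{2}\circ\pi_{1} \colon J_{k,1}(\mc{O}_{K}) \overset{\cong}\longrightarrow S_{k+2}$ of \corref{Sk2} shows that $g \mapsto 4k\,g\,\xi$, and hence $g \mapsto g\xi$, is an isomorphism $M_{k-1}(SL(2,\mbb{Z}),\bar{\omega}) \to S_{k+2}$.

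In either approach the only ingredient that is not formal bookkeeping (with weights, characters, and the integral-domain property) is surjectivity, equivalently $S_{k+2}^{\circ} = S_{k+2}$. This comes down to the structure of $\xi$ recorded in \cite[Proposition~2]{bocherer}: once $\xi \in S_{3}(SL(2,\mbb{Z}),\omega)$ is known to be nonzero, the valence formula in weight $3$ on $SL(2,\mbb{Z})$ (whose total of orders is $1/4$) forces $\xi$ to be zero-free on $\mc{H}$ and to vanish to the minimal order $1/4$ at the cusp, so that $f/\xi$ is holomorphic on $\mc{H}$ and bounded at infinity for every $f \in S_{k+2}$. That is the step that requires genuine care; as it is already carried out in the proof of \corref{Sk2}, the present corollary reduces to the remarks above.
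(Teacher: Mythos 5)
Your proposal is correct and follows essentially the same route as the paper, whose entire proof is ``Follows from the previous two Corollaries'': your second argument just makes explicit the computation $D_{2}(\pi_{1}\phi)=4k\,h_{0,1}\,\xi$ that identifies the composite of those two isomorphisms with multiplication by $\xi$ (up to a nonzero constant), and your first argument is the same content phrased via $S_{k+2}^{\circ}=S_{k+2}$ from the proof of Corollary~\ref{Sk2}. Both versions check out, including the heat-equation identity $\partial_{z}^{2}\theta_{1,j}|_{z=0}=8\pi i\,\vartheta_{j}'$.
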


\begin{proof}
Follows from the previous two Corollaries.
\end{proof}

We define $J_{k,1}(\mc{O}_{K},N)$ to be the space of Hermitian Jacobi forms for the congruence subgroup $\Gamma_{0}(N)$ in the usual way. It is immediate that the same proof as in Proposition \ref{2mod4} applies to this case when $k \equiv 2 \pmod{4}$ (see also \cite{bocherer} where the case of classical Jacobi forms is done) and we have an exact sequence of vector spaces \begin{equation} \label{2mod4levelN} 0 \longrightarrow J_{k,1}(\mc{O}_{K},N) \overset{\pi_{1}}\longrightarrow J_{k,1}(N) \overset{D_{0}}\longrightarrow M_{k}(N)  \end{equation} 

\begin{cor}
Let $N > 1$. Then $J_{2,1}(\mc{O}_{K},N) = 0$.
\end{cor}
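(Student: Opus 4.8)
The plan is to specialise the exact sequence \eqref{2mod4levelN} to $k=2$: it then shows that $\pi_{1}$ carries $J_{2,1}(\mc{O}_{K},N)$ isomorphically onto $\ker\bigl(D_{0}\colon J_{2,1}(N)\to M_{2}(\Gamma_{0}(N))\bigr)$, so it suffices to prove that this kernel is zero. For this I would invoke the level-$N$ form of B\"ocherer's structure theorem --- the analogue of \corref{Sk2}, from \cite[Theorem~2]{bocherer}. A nonzero classical Jacobi form of index $1$ vanishes at $z=0$ to order at most $2$, a bound which does not depend on the level, so $D_{2}$ embeds $\ker D_{0}$ into $S_{4}(\Gamma_{0}(N))$ with image $S_{4}^{\circ}(\Gamma_{0}(N))=\{\,f\in S_{4}(\Gamma_{0}(N))\mid f/\xi\text{ is a holomorphic modular form of weight }1\,\}$, where $\xi=\vartheta_{1}\vartheta_{0}^{\p}-\vartheta_{0}\vartheta_{1}^{\p}\in S_{3}(SL(2,\mbb{Z}),\omega)$.

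Since $\xi$ is a nonzero scalar multiple of $\eta^{6}$ it has no zeros on $\mc{H}$ (this is \cite[Proposition~2]{bocherer}), so dividing by $\xi$ identifies $S_{4}^{\circ}(\Gamma_{0}(N))$ with a space $M_{1}(\Gamma,\bar{\omega})$ of holomorphic modular forms of weight $1$ --- on the congruence subgroup $\Gamma$ on which the theta components of the forms in $J_{2,1}(\mc{O}_{K},N)$ are modular --- carrying the multiplier system $\bar{\omega}$ dictated by \eqref{2.5}, \eqref{2.6} and \eqref{units}; for $\phi\in J_{2,1}(\mc{O}_{K},N)$ the form in question is just the theta component $h_{0,1}$, exactly as in the proof of \thmref{2mod4}. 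Thus the Corollary is equivalent to the vanishing $M_{1}(\Gamma,\bar{\omega})=0$. Note that $\bar{\omega}$ is genuinely fractional, $\bar{\omega}(T)=-i$, so it is not a Dirichlet nebentypus: there is no weight-$1$ Eisenstein series carrying it, every $\varphi\in M_{1}(\Gamma,\bar{\omega})$ is therefore a cusp form, and in the Hermitian picture one has $h_{0,1}=c(3)q^{3/4}+c(7)q^{7/4}+\cdots$.

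Showing $\varphi\equiv 0$ is the step I expect to be the main obstacle, since weight $1$ is not governed by the usual dimension formulas. A partial handle is the valence formula applied to $\varphi^{4}\in S_{4}(\Gamma_{0}(N))$: being a weight-$1$ cusp form with this multiplier, $\varphi$ vanishes at a cusp of width $w$ to order $\geq\tfrac34,\tfrac12,\tfrac14$ when $w\equiv 1,2,3\pmod 4$ and to order $\geq 1$ when $w\equiv 0\pmod 4$, so $\varphi^{4}$ vanishes there to order $\geq 3,2,1$ or $\geq 4$ respectively; for a large class of levels the sum of these forced orders already exceeds $\tfrac13[SL(2,\mbb{Z}):\Gamma_{0}(N)]$, which is impossible, so $\varphi^{4}=0$ and hence $\varphi=0$. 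For the remaining levels one must pin down $\Gamma$ and $\bar{\omega}$ exactly --- they involve the weight-one theta character of $K=\mbb{Q}(i)$, so $\bar{\omega}$ may acquire a quadratic twist --- and then rule out the corresponding weight-$1$ forms, either through the classification of weight-one modular forms or through the level-$N$ analysis in \cite{bocherer}. Granting $M_{1}(\Gamma,\bar{\omega})=0$, the isomorphisms above yield $J_{2,1}(\mc{O}_{K},N)=0$.
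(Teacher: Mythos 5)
Your first step --- specialising the exact sequence~(\ref{2mod4levelN}) to $k=2$ and reducing the Corollary to the vanishing of $\ker\bigl(D_{0}\colon J_{2,1}(N)\to M_{2}(N)\bigr)$ --- is exactly the reduction the paper makes. But the paper then finishes in one line: it cites the theorem of Arakawa and B\"ocherer in \cite{bocherer2} which states precisely that $D_{0}$ is injective on $J_{2,1}(N)$ for $N>1$. That injectivity is the entire content of the Corollary, and it is a nontrivial result (the subject of a whole paper, whose title is exactly ``vanishing of certain spaces of elliptic modular forms''). Your proposal attempts to reprove it from scratch and does not succeed: you say yourself that establishing $M_{1}(\Gamma,\bar{\omega})=0$ is ``the main obstacle'' and you leave ``the remaining levels'' to an unspecified classification. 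A statement of the form ``for all $N>1$'' is not proved by an argument that works for some levels and defers the rest.

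Moreover, the valence-formula step covers far fewer levels than you suggest. For $\varphi^{4}\in S_{4}(\Gamma_{0}(N))$ the total vanishing forced by the valence formula is $\tfrac13[SL(2,\mbb{Z}):\Gamma_{0}(N)]$, which grows linearly in $N$ (up to the usual Euler factor), while the number of cusps of $\Gamma_{0}(N)$ grows only like a divisor-type sum; since your argument extracts at most order $4$ of forced vanishing per cusp, the contradiction you want is available only for \emph{small} $N$ (already for $N=p$ prime it fails once $p$ is moderately large), not for ``a large class of levels.'' There are also unjustified assertions along the way: that every form in $M_{1}(\Gamma,\bar{\omega})$ is cuspidal requires the multiplier to be nontrivial on the stabiliser of \emph{every} cusp, not just at infinity, and the claimed local vanishing orders ``$\geq\tfrac34,\tfrac12,\tfrac14$ according to width mod $4$'' depend on how $\bar{\omega}$ restricts to each cusp's stabiliser, which you have not computed. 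The honest fix is simply to invoke \cite{bocherer2} for the injectivity of $D_{0}$ at weight $2$ and level $N>1$, after which the Corollary follows immediately from~(\ref{2mod4levelN}).
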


\begin{proof}
A result of T. Arakawa, S. B$\ddot{\mbox{o}}$cherer \cite{bocherer2} says that $D_{0}$ in~(\ref{2mod4levelN}) is injective when $k=2$ and $N > 1$. Therefore the Corollary follows.
\end{proof}

\subsection{The case $k \equiv 0 \pmod{4}$}

We now treat the case $k \equiv 0 \pmod{4}$. We recall that 
\begin{equation} \label{jacobiiso} 
D_{0}+D_{2} \colon J_{k,1} \overset{\cong}\longrightarrow M_{k} + S_{k+2}; \qq D_{2} = \frac{2k}{(2 \pi i)^{2}} \psi_{2} - \frac{2}{2 \pi i} \psi_{0}^{\p} \, \colon J_{k,1} \rightarrow S_{k+2}
\end{equation}
where, $\phi(\tau,z)= \underset{\nu \geq 0}\sum \psi_{\nu}(\tau) z^{\nu} \in J_{k,1}$ is the Taylor expansion of $\phi$ around 
$z=0$.

Further from the Taylor expansion of Hermitian Jacobi forms, one can define the $D_{\nu}(\mc{O}_{K})$ operators in the same way as for the case of Jacobi forms (see \cite{somu}, \cite{sasaki}). Let $ \phi(\tau,z_{1},z_{2})= \underset{\alpha,\beta \geq 0}\sum \chi_{\alpha,\beta}(\tau) z_{1}^{\alpha} z_{2}^{\beta} \in J_{k,1}(\mc{O}_{K})$ be the Taylor expansion of $\phi$ around $z_{1}=z_{2}=0$. Let $\xi_{1,1}:= D_{1}(\mc{O}_{K})\phi, \, \xi_{2,2}:= D_{2}(\mc{O}_{K})\phi$. Then 
\begin{equation}\label{Dhermitian}
\xi_{1,1} := \chi_{1,1} - \frac{2 \pi i}{k} \chi_{0,0}^{\p}, \q \xi_{2,2} := \chi_{2,2}- \frac{2 \pi i}{k+2} \chi_{1,1}^{\p} + \frac{(2 \pi i)^{2}}{2(k+1)(k+2)} \chi_{0,0}^{\p \p}
\end{equation}
define linear maps from $J_{k,1}(\mc{O}_{K})$ to $ S_{k+2}$ and from $J_{k,1}(\mc{O}_{K})$ to $ S_{k+4}$ respectively.

R. Sasaki, in \cite{sasaki} proved that when $k \equiv 0 \pmod{4}$
\begin{equation} \label{sasakikmod4}
\xi \colon J_{k,1}(\mc{O}_{K}) \rightarrow M_{k} \oplus S_{k+2} \oplus S_{k+4}, \qq \phi \mapsto \chi_{0,0}+\xi_{1,1}+\xi_{2,2}-6(\chi_{4,0}+\chi_{0,4}) 
\end{equation}
is an isomorphism.

\begin{rem}
We remark here that from the Fourier expansion of a Hermitian Jacobi form $\phi$~(\ref{fourier}) of index $1$, we get $\phi(\tau,z_{1},z_{2}) = \phi(\tau,z_{2},z_{1})$ if $k \equiv 0 \pmod{4}$ and hence in it's Taylor expansion we have $\chi_{\alpha,\beta} = \chi_{\beta,\alpha}\, \forall \, \alpha,\beta \geq 0$. Hence the isomorphism is also given by $ \phi \mapsto \chi_{0,0}+\xi_{1,1}+\xi_{2,2}-12(\chi_{0,4})$. Hence the $4$ Taylor coefficients $ \chi_{0,0},\chi_{0,4},\chi_{1,1},\chi_{2,2}$ determine $\phi$, as expected in analogy with classical Jacobi forms.
\end{rem}

\begin{thm}\label{0mod4}
\begin{enumerate}

\item

Let $k \equiv 0 \pmod{4}$. Then there is an exact sequence of vector spaces
\begin{equation} \label{0mod4} 0 \longrightarrow  S_{k+4} \overset{ \xi^{-1}\mid_{S_{k+4}} }\longrightarrow J_{k,1}(\mathcal{O}_{K}) \overset{\pi_{1}}\longrightarrow J_{k,1} \longrightarrow 0 \end{equation}
where $\xi \colon J_{k,1}(\mathcal{O}_{K}) \rightarrow  M_{k}\oplus S_{k+2} \oplus S_{k+4} $ is the isomorphism given in \cite{sasaki}.

\item

Let $k \equiv 0 \pmod{4}$. Then $\pi_{1+i}$ induces an isomorphism between $J_{k,1}(\mathcal{O}_{K})$ and  $J_{k,2}$. 
\end{enumerate}
\end{thm}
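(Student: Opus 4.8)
The plan is to treat the two parts separately, using the Taylor-expansion description \eqref{Dhermitian}, \eqref{sasakikmod4} of Sasaki's isomorphism $\xi$ for (1) and the theta decomposition for (2).

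For part (1), let $\phi\in J_{k,1}(\mathcal{O}_{K})$ have Taylor expansion $\phi=\sum_{\alpha,\beta\ge 0}\chi_{\alpha,\beta}(\tau)z_{1}^{\alpha}z_{2}^{\beta}$. Applying \eqref{jacobi1} with $M=I$, $\epsilon=i$ gives $\chi_{\alpha,\beta}=i^{\alpha-\beta-k}\chi_{\alpha,\beta}$, so $\chi_{\alpha,\beta}=0$ unless $\alpha-\beta\equiv k\equiv 0\pmod 4$; in particular $\chi_{2,0}=\chi_{0,2}=0$. Hence $\pi_{1}\phi(\tau,z)=\phi(\tau,z,z)=\sum_{n}\bigl(\sum_{\alpha+\beta=n}\chi_{\alpha,\beta}\bigr)z^{n}$ has, in the notation of \eqref{jacobiiso}, developing coefficients $\psi_{0}=\chi_{0,0}$ and $\psi_{2}=\chi_{1,1}$. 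Plugging these into the formula for $D_{2}$ in \eqref{jacobiiso} and comparing with \eqref{Dhermitian} yields $D_{0}(\pi_{1}\phi)=\chi_{0,0}$ and $D_{2}(\pi_{1}\phi)=\frac{2k}{(2\pi i)^{2}}\xi_{1,1}(\phi)$. Since $D_{0}+D_{2}$ is injective on $J_{k,1}$ by \eqref{jacobiiso}, $\phi\in\ker\pi_{1}$ iff $\chi_{0,0}=0$ and $\xi_{1,1}(\phi)=0$, which by \eqref{sasakikmod4} says exactly $\xi(\phi)\in\{0\}\oplus\{0\}\oplus S_{k+4}$; thus $\ker\pi_{1}=\xi^{-1}(S_{k+4})=\mathrm{Im}(\xi^{-1}|_{S_{k+4}})$, and $\xi^{-1}|_{S_{k+4}}$ is injective since $\xi$ is an isomorphism. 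This gives exactness at $S_{k+4}$ and at $J_{k,1}(\mathcal{O}_{K})$. For surjectivity of $\pi_{1}$ I would count dimensions via \eqref{sasakikmod4}: $\dim\mathrm{Im}\,\pi_{1}=\dim J_{k,1}(\mathcal{O}_{K})-\dim S_{k+4}=\dim M_{k}+\dim S_{k+2}=\dim J_{k,1}$; alternatively, for $(f,g)\in M_{k}\oplus S_{k+2}$ the form $\xi^{-1}(f,\tfrac{(2\pi i)^{2}}{2k}g,0)$ maps under $\pi_{1}$ to the Jacobi form with $(D_{0}+D_{2})$-image $(f,g)$, so $\pi_{1}$ is onto.

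For part (2), $N(1+i)=2$, so $\pi_{1+i}\colon J_{k,1}(\mathcal{O}_{K})\to J_{k,2}$. I would start from the theta decomposition of $\pi_{1+i}\phi$ obtained in the proof of \thmref{2mod4} via \lemref{Utheta}: its three index-$2$ theta components (after the symmetry $H_{1}=H_{3}$) are $h_{0,0}a_{0}+h_{1,1}a_{2}$, $(h_{0,1}+h_{1,0})a_{1}$ and $h_{0,0}a_{2}+h_{1,1}a_{0}$, where $a_{\mu}=\theta_{2,\mu}(\tau,0)$. When $k\equiv 0\pmod 4$, \eqref{units} forces $h_{0,1}=h_{1,0}$ and imposes no relation among $h_{0,0},h_{1,1}$ (unlike the case $k\equiv 2\pmod 4$). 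Hence $\pi_{1+i}\phi=0$ gives the homogeneous system $a_{0}h_{0,0}+a_{2}h_{1,1}=0$, $a_{2}h_{0,0}+a_{0}h_{1,1}=0$, $a_{1}h_{0,1}=0$. Since $a_{0}=\vartheta_{0}(2\tau)$, $a_{2}=\vartheta_{1}(2\tau)$ and $2a_{1}=\vartheta_{1}(\tau/2)$ are classical theta constants up to a rescaling of $\tau$, they are non-vanishing on $\mathcal{H}$, and $a_{0}^{4}-a_{2}^{4}$ is a fourth power of a theta constant by Jacobi's identity, so $a_{0}^{2}-a_{2}^{2}$ is non-vanishing too; therefore the system forces $h_{0,0}=h_{1,1}=h_{0,1}=h_{1,0}=0$, i.e. $\phi=0$, and $\pi_{1+i}$ is injective. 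As $\dim J_{k,2}=\dim M_{k}+\dim S_{k+2}+\dim S_{k+4}$ (the Eichler--Zagier dimension formula) equals $\dim J_{k,1}(\mathcal{O}_{K})$ by \eqref{sasakikmod4}, the injective map $\pi_{1+i}$ is an isomorphism. Alternatively one inverts the system explicitly for $\psi\in J_{k,2}$ with theta components $H_{0},H_{1},H_{2}$: the non-vanishing above makes $h_{0,0}=(a_{0}H_{0}-a_{2}H_{2})/(a_{0}^{2}-a_{2}^{2})$, $h_{0,1}=h_{1,0}=H_{1}/(2a_{1})$, etc. holomorphic and bounded at the cusp, and these inherit the transformation laws \eqref{2.5}, \eqref{2.6}, \eqref{units} from those of $\psi$ and of the $a_{\mu}$.

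I expect the main obstacle to be in part (2): pinning down the precise shape of the theta decomposition of $\pi_{1+i}\phi$ and the relations among the $h_{i,j}$ forced by \eqref{units} when $k\equiv 0\pmod 4$, together with --- if one wants a self-contained surjectivity argument rather than the dimension count --- the verification that the components produced by inverting the theta-constant matrix genuinely transform as theta components of a Hermitian Jacobi form (equivalently, that $\pi_{1+i}$ intertwines the relevant Weil representations). Part (1) is essentially bookkeeping once the vanishing pattern of the $\chi_{\alpha,\beta}$ and Sasaki's description of $\xi$ are in hand.
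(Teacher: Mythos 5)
Your argument is correct and takes essentially the same route as the paper: part (1) is precisely the content of Lemma~\ref{comm} unpacked (the vanishing of $\chi_{\alpha,\beta}$ for $\alpha-\beta\not\equiv 0\pmod 4$ making $D_{0}+\tfrac{(2\pi i)^{2}}{2k}D_{2}$ of $\pi_{1}\phi$ match the first two components of $\xi\phi$), and part (2) follows the paper's path through the theta decomposition of Lemma~\ref{Utheta}, triviality of the kernel from $a_{0}^{2}\not\equiv a_{2}^{2}$, and the dimension count of Corollary~\ref{3.7}. The only (harmless) deviation is that you justify $a_{0}^{2}-a_{2}^{2}\neq 0$ via Jacobi's quartic identity for theta constants, where the paper appeals to the non-vanishing of the theta Wronskian $Wr_{2}$.
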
 

\begin{proof}
$1$. \, Follows directly from Lemma~\ref{comm} given below.

$2$. \, When $k \equiv 0 \pmod{4}$, in the Theta decomposition of $\phi \in J_{k,1}(\mathcal{O}_{K})$ we have $h_{0,1}=h_{1,0}$. Let $\phi \in \ker{\pi_{1+i}}$. From the Theta decomposition of $\pi_{1+i}\phi$ (see \cite{somu2}) we easily deduce $h_{0,1}=h_{1,0} =0, \, (a_{0}^{2} - a_{2}^{2}) h_{0,0}=  (a_{0}^{2} - a_{2}^{2})  h_{1,1} = 0$. But $a_{0}^{2} \not \equiv a_{2}^{2}$ since $Wr_{2}$ doesnot vanish on $\mc{H}$ (see the proof of \textit{Step 1} of Theorem~\ref{0mod4ind2}). Hence, the kernel is trivial. Moreover, from Corollary~\ref{3.7}, considering the domensions, we conclude that $\pi_{1+i}$ is an isomorphism.
\end{proof}

\begin{cor}\label{3.7}
Let $k \equiv 0 \pmod{4}$. Then \[ J_{k,2} \overset{D_{0}+D_{2}+D_{4}}\longrightarrow M_{k}\oplus S_{k+2} \oplus S_{k+4} \overset{\xi^{-1}}\longrightarrow J_{k,1}(\mathcal{O}_{K}) \]
is an isomorphism.
\end{cor}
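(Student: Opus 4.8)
Since the map $\xi\colon J_{k,1}(\mc{O}_{K})\to M_{k}\oplus S_{k+2}\oplus S_{k+4}$ of \cite{sasaki} recalled in~(\ref{sasakikmod4}) is an isomorphism, so is $\xi^{-1}$, and the Corollary is equivalent to the assertion that the Eichler--Zagier development map
\[ D_{0}+D_{2}+D_{4}\colon J_{k,2}\longrightarrow M_{k}\oplus S_{k+2}\oplus S_{k+4} \]
is an isomorphism for $k\equiv 0\pmod 4$ (here $D_{0}\phi=\phi(\tau,0)$, and $D_{2}\phi,\,D_{4}\phi$ are the usual modular combinations of the first Taylor coefficients of $\phi$, which a short look at the Fourier expansion shows to be cusp forms of weights $k+2$ and $k+4$, so the map is well defined). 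The plan is to establish bijectivity of this map by proving injectivity directly from the Taylor expansion and then forcing the dimensions to coincide.

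For the injectivity I would argue as for classical Jacobi forms: by \cite{zagier} a Jacobi form of index $m$ is determined by its Taylor coefficients $\psi_{0},\dots,\psi_{2m}$, and for $m=2$ with $k$ even the odd coefficients $\psi_{1},\psi_{3}$ vanish because $\phi(\tau,-z)=\phi(\tau,z)$; thus $\phi\in J_{k,2}$ is determined by $\psi_{0},\psi_{2},\psi_{4}$. Since $D_{0},D_{2},D_{4}$ are triangular in these data --- $D_{0}\phi=\psi_{0}$, $D_{2}\phi$ equals $\psi_{2}$ plus a multiple of $\psi_{0}'$, and $D_{4}\phi$ equals $\psi_{4}$ plus multiples of $\psi_{2}'$ and $\psi_{0}''$ --- the vanishing of $D_{0}\phi,D_{2}\phi,D_{4}\phi$ forces $\psi_{0}=\psi_{2}=\psi_{4}=0$, hence $\phi=0$.

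It then remains to see that $\dim J_{k,2}\ge\dim M_{k}+\dim S_{k+2}+\dim S_{k+4}$. By the isomorphism $\xi$ the right-hand side equals $\dim J_{k,1}(\mc{O}_{K})$, and for $k\equiv 0\pmod 4$ the restriction $\pi_{1+i}\colon J_{k,1}(\mc{O}_{K})\to J_{k,2}$ is injective; this is exactly the triviality of $\ker\pi_{1+i}$ proved in Theorem~\ref{0mod4}, a statement that rests only on the theta decomposition of $\pi_{1+i}\phi$ and the non-vanishing of $Wr_{2}$ on $\mc{H}$, and not on the present Corollary. Combining $\dim J_{k,2}\ge\dim J_{k,1}(\mc{O}_{K})=\dim M_{k}+\dim S_{k+2}+\dim S_{k+4}$ with the reverse inequality from the injectivity of $D_{0}+D_{2}+D_{4}$ forces all of these numbers to be equal, so the development map is an isomorphism, and composing with $\xi^{-1}$ gives the Corollary. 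The only real point of care is keeping the logic straight: the argument uses only that $\pi_{1+i}$ has trivial kernel, whereas the fact that $\pi_{1+i}$ is an \emph{isomorphism} is a \emph{consequence} of this Corollary in Theorem~\ref{0mod4}. If one prefers a route that avoids this dependency altogether, the Eichler--Zagier dimension formula for $J_{k,2}$ supplies the equality $\dim J_{k,2}=\dim M_{k}+\dim S_{k+2}+\dim S_{k+4}$ for all even $k$ outright, after which the Corollary follows from injectivity of $D_{0}+D_{2}+D_{4}$ alone.
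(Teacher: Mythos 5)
Your proof is correct and follows essentially the same route as the paper, which simply notes that $D_{0}+D_{2}+D_{4}$ is injective by Eichler--Zagier and concludes by a dimension count (your fallback via the equality $\dim J_{k,2}=\dim M_{k}+\dim S_{k+2}+\dim S_{k+4}$ is exactly that count). Your alternative count via the injectivity of $\pi_{1+i}$ is also sound, and your observation that only the \emph{injectivity} of $\pi_{1+i}$ from Theorem~\ref{0mod4} may be used here --- its being an isomorphism is itself deduced from this Corollary --- is a worthwhile clarification of the paper's logic.
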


\begin{proof}
In fact, each map is an isomorphism. The first map is injective by \cite{zagier} and dimension count shows it is an isomorphism.
\end{proof}

\begin{rem}
In the above Theorem, it is clear that if $f \in S_{k+4}$,
\[ \xi^{-1}f = \left\{ \phi \in J_{k,1}(\mathcal{O}_{K}) \mid \chi_{2,2}-12 \, \chi_{0,4} =f   \right\} \]
\end{rem}

\begin{lem}\label{comm}
The following diagram
\[ \begin{CD} 
J_{k,1}(\mc{O}_{K})                 @>\pi_{1} >>  J_{k,1}\\
@V{\cong}V{\xi}V                   @V{\cong}V{D_{0}+\frac{(2 \pi i)^{2}}{2 k}D_{2}}V\\
M_{k}\oplus S_{k+2} \oplus S_{k+4}  @>pr.>>  M_{k} \oplus S_{k+2} 
\end{CD} \]
is commutative.
\end{lem}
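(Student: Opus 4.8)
The plan is to verify the commutativity by computing both composites explicitly on the Taylor coefficients of a form $\phi\in J_{k,1}(\mc{O}_{K})$ about $z_{1}=z_{2}=0$, say $\phi(\tau,z_{1},z_{2})=\sum_{\alpha,\beta\ge0}\chi_{\alpha,\beta}(\tau)z_{1}^{\alpha}z_{2}^{\beta}$. First I would record the effect of $\pi_{1}$ on Taylor expansions: since $(\pi_{1}\phi)(\tau,z)=\phi(\tau,z,z)=\sum_{\nu\ge0}\bigl(\sum_{\alpha+\beta=\nu}\chi_{\alpha,\beta}(\tau)\bigr)z^{\nu}$, the Taylor coefficients $\psi_{\nu}$ of $\pi_{1}\phi$ are $\psi_{\nu}=\sum_{\alpha+\beta=\nu}\chi_{\alpha,\beta}$; in particular $\psi_{0}=\chi_{0,0}$, $\psi_{1}=\chi_{1,0}+\chi_{0,1}$, and $\psi_{2}=\chi_{2,0}+\chi_{1,1}+\chi_{0,2}$.

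The key input is a vanishing statement for the low-degree coefficients. Applying the transformation law~(\ref{jacobi1}) with $M=I$ and $\epsilon=i$, and using $k\equiv0\pmod4$ so that $i^{-k}=1$, gives $\phi(\tau,z_{1},z_{2})=\phi(\tau,iz_{1},-iz_{2})$; comparing the coefficient of $z_{1}^{\alpha}z_{2}^{\beta}$ on both sides yields $\chi_{\alpha,\beta}=i^{\alpha-\beta}\chi_{\alpha,\beta}$, so $\chi_{\alpha,\beta}=0$ unless $\alpha\equiv\beta\pmod4$. In particular $\chi_{1,0}=\chi_{0,1}=0$ and $\chi_{2,0}=\chi_{0,2}=0$, whence $\psi_{1}=0$ and $\psi_{2}=\chi_{1,1}$.

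Now I would evaluate the right-hand composite on $\pi_{1}\phi$. On the one hand $D_{0}(\pi_{1}\phi)=(\pi_{1}\phi)(\tau,0)=\psi_{0}=\chi_{0,0}$. On the other, from the formula for $D_{2}$ in~(\ref{jacobiiso}) one gets $\tfrac{(2\pi i)^{2}}{2k}D_{2}(\pi_{1}\phi)=\psi_{2}-\tfrac{2\pi i}{k}\psi_{0}'=\chi_{1,1}-\tfrac{2\pi i}{k}\chi_{0,0}'$, which is exactly $\xi_{1,1}=D_{1}(\mc{O}_{K})\phi$ as defined in~(\ref{Dhermitian}). Hence $\bigl(D_{0}+\tfrac{(2\pi i)^{2}}{2k}D_{2}\bigr)(\pi_{1}\phi)=\chi_{0,0}+\xi_{1,1}$, viewed in $M_{k}\oplus S_{k+2}$. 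For the left-hand composite, the isomorphism $\xi$ of~(\ref{sasakikmod4}) sends $\phi$ to $\chi_{0,0}+\xi_{1,1}+\bigl(\xi_{2,2}-6(\chi_{4,0}+\chi_{0,4})\bigr)$, where the three summands have weights $k$, $k+2$, $k+4$ and hence are the respective components in $M_{k}\oplus S_{k+2}\oplus S_{k+4}$; projecting off the last summand gives $\chi_{0,0}+\xi_{1,1}$. The two composites therefore coincide, which is the asserted commutativity.

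I expect no genuine obstacle: everything reduces to bookkeeping with Taylor coefficients. The two points requiring mild care are keeping straight the normalizing constants relating the classical operator $D_{2}$ to the Hermitian operator $\xi_{1,1}$, and noticing that the crucial identity $\psi_{2}=\chi_{1,1}$ rests only on the $\mc{O}_{K}^{\times}$-equivariance of $\phi$ (the case $\epsilon=i$), which forces $\chi_{2,0}=\chi_{0,2}=0$, rather than on anything deeper.
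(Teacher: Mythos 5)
Your proof is correct and follows essentially the same route as the paper: both composites are computed on the Taylor coefficients $\chi_{\alpha,\beta}$, with the identification $\psi_{2}=\chi_{1,1}$ (hence $\tfrac{(2\pi i)^{2}}{2k}D_{2}(\pi_{1}\phi)=\xi_{1,1}$) resting on the vanishing $\chi_{2,0}=\chi_{0,2}=0$, which the paper likewise deduces from the unit part of the transformation law~(\ref{jacobi1}). Your write-up just makes explicit the $\epsilon=i$ substitution and the weight bookkeeping that the paper leaves implicit.
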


\begin{proof}
The proof is immediate from definitions. We compute $(pr. \circ \xi)\phi = \chi_{0,0}- \frac{2 \pi i}{k} \chi_{0,0}^{\p} + \chi_{1,1}$. On the other hand, $(D_{0}+\frac{(2 \pi i)^{2}}{2 k}D_{2}) \circ \pi_{1} \phi = \chi_{0,0} + (\chi_{0,2}+\chi_{2,0}+\chi_{1,1}) - \frac{2 \pi i}{k} \chi_{0,0}^{\p} = \chi_{0,0} - \frac{2 \pi i}{k} \chi_{0,0}^{\p} + \chi_{1,1}$, since $\chi_{0,2}= \chi_{2,0}= 0 = \chi_{1,0} = \chi_{0,1}$ when $k \equiv 0 \pmod{4}$. In fact, $\chi_{\alpha,\beta}=0$ unless $\alpha-\beta \equiv k \pmod{4}$ follows from first transformation rule for Hermitian Jacobi forms~(\ref{jacobi1}).
\end{proof}

\section{Hermitian Jacobi forms of index 2} \label{index2}
In this section we consider Hermitian Jacobi forms of index $2$ by relating them to classical Jacobi forms and elliptic modular forms via several restriction maps. Let $\mc{D}:= 2 i \mc{O}_{K}$, the Different of $K$. We use a representation of the group defined for a positive integer $m$ : \[ G_{m}:= \left\{ \mu \in \mc{O}_{K}/m \mc{D} \mid N(\mu) \equiv 1 \pmod{4m} \right\} .\] For $m = 2$, we consider the representation of $G_{2} = U_{K} \cong \mbb{Z}/4 \mbb{Z}$ defined in \cite{haverkamp} :
\[ \rho_{2} \colon G_{2} \longrightarrow Aut \left( J_{k,2}(\mathcal{O}_{K}) \right), \q \mu \mapsto W_{\mu} ,\]
where $W_{\mu}$ is defined by  \[ W_{\mu} \left( \Theta_{2}^{t} \cdot h \right) = \Theta_{2}^{t} \cdot h^{(\mu)}, \q h^{(\mu)} :=\left( h_{\mu s} \right)_{s \in \mathcal{O}_{K}^{\sharp}/2 \mathcal{O}_{K} } .\]

Accordingly we have a decomposition  of $J_{k,2}(\mathcal{O}_{K})$ : \begin{equation} J_{k,2}(\mathcal{O}_{K}) = \underset{ \eta \in G_{2}^{*}}\bigoplus  J_{k,2}^{\eta}(\mathcal{O}_{K}), \label{chardecom} \end{equation} 
where, $G_{2}^{*}$ is the group of characters of $G$ and \begin{equation} J_{k,2}^{\eta}(\mathcal{O}_{K}):= \left\{ \phi \in J_{k,2}(\mathcal{O}_{K}) \mid W_{\mu} \phi = \eta(\mu) \phi \q \forall \mu \in G_{2}  \right\} .\end{equation}

$ G_{2} \cong \mbb{Z}/4 \mbb{Z} \mbox{ via } \, i \mapsto 1, \, -1 \mapsto 2, \, -i \mapsto 3, \, 1 \mapsto 0$. Also, \[ G_{2}^{*} = \left\{ \eta_{\alpha}:= \left( x \mapsto e^{\frac{2 \pi i \alpha x}{4} } \right); \, x, \alpha \in \mbb{Z}/4 \mbb{Z} \right\} .\]

We take as a set of representatives of $\mc{O}^{\sharp}_{K}$ in $\mathcal{O}_{K}^{\sharp}/m \mathcal{O}_{K}$ as the set \[ \mc{S}_{m}:= \left\{ \frac{a}{2}+i \frac{b}{2} \, \mid \, a,b \in \mbb{Z}/2 m \mbb{Z} \right\}. \] We denote the corresponding Theta components of $\phi \in J_{k,2}(\mathcal{O}_{K})$ by $h_{a,b}$ and the Hermitian Theta functions of weight $1$ and index $m$ by $\theta^{H}_{m;a,b}$ (or by $\theta^{H}_{m;s}, \, s = \frac{a+ib}{2} \in \mc{S}_{m}$) in this section, but we drop the index unless there is a danger of confusion. Also we denote by $\theta_{m,\mu}(\tau,z)$ ($\mu \pmod{2m}$) the classical Theta functions.  
The following Lemmas give the Theta decomposition of the images of Hermitian Jacobi forms of index $2$ under the restriction maps. We define for convenience of notation $a_{\mu} := \theta_{2,\mu}(\tau,0)$ ($ \mu \in \mbb{Z}/4 \mbb{Z}$) and $b_{\mu} := \theta_{4,\mu}(\tau,0)$ ($ \mu \in \mbb{Z}/8 \mbb{Z}$).

\begin{lem} \label{2to2}
Let $ \pi_{1} \colon J_{k,2}(\mathcal{O}_{K}) \rightarrow J_{k,2}$ given by $\phi(\tau,  z_{1},  z_{2})\mapsto \phi(\tau,  z , z) $ and $ \pi_{1} \phi = \underset{\mu \in \mbb{Z}/ 4 \mbb{Z}}\sum H_{\mu}(\tau) \cdot \theta_{2,\mu}(\tau,z)$ be it's Theta decomposition, where $H_{\mu} = (-1)^{k} H_{- \mu}$ ($\mu \in \mbb{Z} / 4 \mbb{Z} $). Then, 
\begin{align}
& H_{0} = h_{0,0} a_{0}  + h_{0,1} a_{1}  + h_{0,2} a_{2}  + h_{0,3} a_{3},  \\
& H_{1} = h_{1,0} a_{0}  + h_{1,1} a_{1}  + h_{1,2} a_{2}  + h_{1,3} a_{3},  \\
& H_{2} = h_{2,0} a_{0}  + h_{2,1} a_{1}  + h_{2,2} a_{2}  + h_{2,3} a_{3} . 
\end{align}
\end{lem}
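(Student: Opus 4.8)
The plan is to obtain the formulas for $H_\mu$ by substituting $z_1 = z_2 = z$ directly into the theta decomposition~(\ref{thetadecomposition}) of $\phi \in J_{k,2}(\mathcal{O}_K)$ and then re-expanding the resulting function of $(\tau,z)$ in terms of the classical index-$2$ theta functions $\theta_{2,\mu}(\tau,z)$. So first I would write
\[
\pi_1\phi(\tau,z) = \phi(\tau,z,z) = \sum_{s \in \mathcal{O}_K^\sharp/2\mathcal{O}_K} h_s(\tau)\,\theta^H_{2;s}(\tau,z,z),
\]
and the whole problem reduces to understanding the restriction $\theta^H_{2;s}(\tau,z,z)$ of each Hermitian theta function to the diagonal $z_1=z_2=z$. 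Writing $r = \tfrac{x+iy}{2} \in \mathcal{O}_K^\sharp$ with $x,y \in \mathbb{Z}$, one has $N(r) = \tfrac{x^2+y^2}{4}$ and $rz_1 + \bar r z_2 = rz + \bar r z = x z$ when $z_1=z_2=z$; thus on the diagonal the exponential becomes $e\!\left(\tfrac{x^2+y^2}{8}\tau + xz\right)$, which splits as $e\!\left(\tfrac{x^2}{8}\tau + xz\right)\cdot e\!\left(\tfrac{y^2}{8}\tau\right)$. Summing $r \equiv s \pmod{2\mathcal{O}_K}$, i.e. $x \equiv a$, $y \equiv b \pmod 4$ for $s = \tfrac{a+ib}{2}$, the $x$-sum produces exactly $\theta_{2,a}(\tau,z)$ and the $y$-sum produces $\vartheta$-type factors, namely the values $\theta_{2,b}(\tau,0) = a_b$. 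Hence $\theta^H_{2;s}(\tau,z,z) = a_b \,\theta_{2,\mu}(\tau,z)$ with $\mu \equiv a \pmod 4$.

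Collecting terms, $\pi_1\phi = \sum_{a,b} h_{a,b}\, a_b\, \theta_{2,a}$, so grouping by the value $\mu = a$ gives $H_\mu = \sum_{b} h_{\mu,b}\, a_b$, which is precisely the claimed triple of formulas for $\mu = 0,1,2$. The symmetry relation $H_\mu = (-1)^k H_{-\mu}$ should then be read off from the unit transformation law~(\ref{units}): applying $\epsilon = -1$ gives $h_{-a,-b} = (-1)^{?}h_{a,b}$ up to the weight-dependent sign coming from $\epsilon^{-k}$ in~(\ref{jacobi1}) restricted to scalar matrices, together with $a_b = a_{-b}$ (evenness of classical theta constants in the characteristic), and these combine to the stated parity; alternatively it follows at once from $\phi(\tau,z,z)=\phi(\tau,z,z)$ and the known symmetry $\theta_{2,\mu} = \theta_{2,-\mu}$ of classical index-$2$ theta functions together with $H_\mu=(-1)^kH_{-\mu}$ being the defining symmetry of a weight-$k$ index-$2$ Jacobi form's theta vector.

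The only genuine subtlety — and the step I expect to need the most care — is bookkeeping the reduction of $r \pmod{2\mathcal{O}_K}$ against the chosen representative set $\mathcal{S}_2$ and matching conventions: one must check that as $r$ ranges over $r \equiv s \pmod{2\mathcal{O}_K}$ the pair $(x \bmod 4, y\bmod 4)$ is pinned to $(a,b)$ while $x,y$ themselves range over all of $a+4\mathbb{Z}$, $b+4\mathbb{Z}$, and that the index $\mu$ of the classical theta function $\theta_{2,\mu}$ (which is only defined $\bmod\ 4$) is correctly $a \bmod 4$. There is also a normalization check on $\mathcal{O}_K^\sharp = \tfrac{i}{2}\mathcal{O}_K$ versus the parametrization $r = \tfrac{x+iy}{2}$, and on the fact that $\theta_{2,\mu}(\tau,0) = a_\mu$ with $a_1 = a_3$ as noted in the text. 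Once these identifications are fixed, the computation is the routine splitting of a two-variable Gaussian sum into a product, and the three displayed identities drop out immediately; this is exactly the Hermitian analogue of the classical computation in \cite{zagier}, so I would keep the proof to a few lines referencing that splitting.
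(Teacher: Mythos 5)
Your proof is correct and is essentially the same as the paper's: both substitute $z_1=z_2=z$ into the theta decomposition, write $r=\tfrac{x+iy}{2}$ so that the Hermitian theta function splits as $\theta_{2,a}(\tau,z)\cdot a_b$ (an $x$-sum giving the classical index-$2$ theta function times a $y$-sum giving the theta constant), and then collect terms to get $H_\mu=\sum_b h_{\mu,b}a_b$. The bookkeeping issues you flag (reduction mod $2\mathcal{O}_K$ versus $(a,b)\bmod 4$, and the parametrization of $\mathcal{O}_K^\sharp$) are exactly the points the paper's computation handles, and you resolve them correctly.
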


\begin{proof}
Let $s \in \mc{S}_{2}$. The effect of $\pi_{1}$ on $\theta^{H}_{2;s}$ is given below.
\begin{align*} 
\pi_{1} \theta^{H}_{2;s}  &= \underset{ r \in \mc{O}^{\sharp}_{K}}{\underset{r \equiv s \pmod{2 \mc{O}_{K}}}\sum} e \left( \frac{N(r)}{2} \tau + 2 \mathrm{Re}(r) \cdot z \right)\\
&= \underset{ \mrm{Re}(2 r) \in \mbb{Z}}{\underset{\mrm{Re}(2 r) \equiv \mrm{Re}(2 s) \pmod{4 \mbb{Z}}}\sum} e \left( \frac{(\mrm{Re}(2 r))^{2}}{8} \tau +  \mrm{Re}(2 r) \cdot z \right) \times \underset{ \mrm{Im}(2 r) \in \mbb{Z}}{\underset{\mrm{Im}(2 r) \equiv \mrm{Im}(2 s) \pmod{4 \mbb{Z}}}\sum} e \left( \frac{(\mrm{Im}(2 r))^{2}}{8} \tau \right) \\
&= \theta_{2, \mathrm{Re}(2s)}( \tau,z) \cdot a_{\mathrm{Im}(2s)}. 
\end{align*}
This shows that $ \pi_{1} \phi = \underset{\mu \in \mbb{Z} / 4 \mbb{Z}}\sum \left( \underset{\mathrm{Re}(2s) = \mu}{\underset{s \in \mc{S}_{2}}\sum} h_{s} \cdot a_{\mathrm{Im}(2s)} \right) \theta_{2,\mu}(\tau,z) $, which proves the Lemma.
\end{proof}

\begin{lem} \label{2to4}
Let $\pi_{1+i} \colon J_{k,2}(\mathcal{O}_{K}) \rightarrow J_{k,4}$ given by $\phi(\tau,  z_{1},  z_{2})\mapsto \phi(\tau, (1+i) z , (1-i) z) $ and $\pi_{1+i} \phi = \underset{\mu \in \mbb{Z}/ 8 \mbb{Z}}\sum \bar{H}_{\mu}(\tau) \cdot \theta_{4,\mu}(\tau,z)$ be it's Theta decomposition, where $\bar{H}_{\mu} = (-1)^{k} \bar{H}_{ - \mu}$ ($\mu \in \mbb{Z} / 8 \mbb{Z} $). Then, 
\begin{align}
& \bar{H}_{0} = h_{0,0} b_{0}  + h_{1,1} b_{2}  + h_{2,2} b_{4}  + h_{3,3} b_{6},  \\
& \bar{H}_{1} = h_{1,0} b_{1}  + h_{2,1} b_{3}  + h_{3,2} b_{5}  + h_{0,3} b_{7},  \\
& \bar{H}_{2} = h_{2,0} b_{2}  + h_{3,1} b_{4}  + h_{0,2} b_{6}  + h_{1,3} b_{0}, \\
& \bar{H}_{3} = h_{3,0} b_{3}  + h_{0,1} b_{5}  + h_{1,2} b_{7}  + h_{2,3} b_{1}, \\
& \bar{H}_{4} = h_{0,0} b_{4}  + h_{1,1} b_{6}  + h_{2,2} b_{0}  + h_{3,3} b_{2}.
\end{align}
\end{lem}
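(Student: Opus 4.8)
The plan is to follow the proof of \lemref{2to2} in structure: by the theta decomposition $\phi = \sum_{s \in \mc{S}_{2}} h_{s}\,\theta^{H}_{2;s}$ it suffices to compute $\pi_{1+i}\theta^{H}_{2;s}$ for a single $s$, expand the result in the classical index-$4$ theta functions $\theta_{4,\mu}(\tau,z)$, and then re-collect the coefficients of each $\theta_{4,\mu}$.

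First I would fix $s = \tfrac{a+ib}{2} \in \mc{S}_{2}$ (so $a,b \in \mbb{Z}/4\mbb{Z}$) and parametrise the $r \in \mc{O}_{K}^{\sharp}$ with $r \equiv s \pmod{2\mc{O}_{K}}$ by $2r = \alpha + i\beta$ with $\alpha \equiv a$, $\beta \equiv b \pmod{4}$. Then $N(r) = (\alpha^{2}+\beta^{2})/4$, and since under $\pi_{1+i}$ one has $z_{2} = (1-i)z = \overline{(1+i)}\,z$, the linear exponent collapses to
\[
 r z_{1} + \bar{r} z_{2} \;=\; 2z\,\mathrm{Re}\!\big(r(1+i)\big) \;=\; (\alpha-\beta)z .
\]
Feeding this into $\theta^{H}_{2;s} = \sum_{r \equiv s} e\big(\tfrac{N(r)}{2}\tau + rz_{1}+\bar r z_{2}\big)$ and using the elementary identity $\tfrac{\alpha^{2}+\beta^{2}}{8} = \tfrac{(\alpha-\beta)^{2}}{16} + \tfrac{(\alpha+\beta)^{2}}{16}$, I get
\[
 \pi_{1+i}\theta^{H}_{2;s} \;=\; \sum_{\substack{\alpha \equiv a \,(4)\\ \beta \equiv b \,(4)}} e\!\Big(\tfrac{(\alpha-\beta)^{2}}{16}\tau + (\alpha-\beta)z\Big)\, e\!\Big(\tfrac{(\alpha+\beta)^{2}}{16}\tau\Big).
\]

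Next I would substitute $(t,w) := (\alpha-\beta,\ \alpha+\beta)$. This linear map is injective, and the image of the lattice coset $\{\alpha \equiv a,\ \beta \equiv b \pmod 4\}$ is exactly the disjoint union of $\{t \equiv a-b,\ w \equiv a+b \pmod 8\}$ and $\{t \equiv a-b+4,\ w \equiv a+b+4 \pmod 8\}$. Hence the two sums decouple into classical theta functions and
\[
 \pi_{1+i}\theta^{H}_{2;s} \;=\; \theta_{4,\,a-b}(\tau,z)\, b_{a+b} \;+\; \theta_{4,\,a-b+4}(\tau,z)\, b_{a+b+4}.
\]
Summing against $h_{s}$ and picking out the coefficient of $\theta_{4,\mu}(\tau,z)$ — for representatives $a,b\in\{0,1,2,3\}$ one has $a-b\in\{-3,\dots,3\}$, so for each fixed $\mu$ only the cases $a-b\equiv\mu$ (contributing $h_{a,b}\,b_{a+b}$) and $a-b\equiv\mu-4$ (contributing $h_{a,b}\,b_{a+b+4}$) can occur — yields the stated formulas for $\bar{H}_{0},\dots,\bar{H}_{4}$; the remaining components follow from $\bar{H}_{\mu}=(-1)^{k}\bar{H}_{-\mu}$, which holds automatically since $\pi_{1+i}\phi \in J_{k,4}$ (and can also be read off the above using~(\ref{units})).

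The only genuinely delicate point is the lattice bookkeeping in the substitution step: since $(\alpha,\beta)\mapsto(\alpha-\beta,\alpha+\beta)$ has image of index $2$ in $\mbb{Z}^{2}$, one must keep track of precisely which residues modulo $8$ are hit, and it is exactly this index-$2$ phenomenon that splits each $\theta^{H}_{2;s}$ into the two pieces $\theta_{4,a-b}b_{a+b}$ and $\theta_{4,a-b+4}b_{a+b+4}$ — that is, makes both $b_{\nu}$ with $\nu\equiv a+b$ and $\nu\equiv a+b+4\pmod 8$ appear. Everything else is the same routine Fourier comparison as in \lemref{2to2}.
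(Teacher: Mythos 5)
Your proof is correct and is essentially the paper's own argument in real coordinates: your substitution $(\alpha,\beta)\mapsto(\alpha-\beta,\alpha+\beta)$ is exactly multiplication by $1+i$, and the index-$2$ splitting you track is the paper's observation that $2(1+i)\mathcal{O}_{K}$ is the disjoint union of $4\mathcal{O}_{K}$ and $2(1+i)+4\mathcal{O}_{K}$, which produces the two terms $\theta_{4,a-b}\,b_{a+b}$ and $\theta_{4,a-b+4}\,b_{a+b+4}$. The final re-collection of coefficients matches the stated formulas, so nothing is missing.
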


\begin{proof}
We note that $ 2(1+i) \mc{O}_{K} = 4  \mc{O}_{K} \cup 2(1+i) + 4 \mc{O}_{K} $ (disjoint union) as abelian groups. Let $s = \frac{\mu}{2} + i \frac{\lambda}{2} \in \mc{S}_{2}$. We have $U_{1+i} \theta_{2, s}^{H} (\tau , z_{1} , z_{2}) = $
{\scriptsize \begin{align*}
&= \underset{r \equiv s \pmod{2 \mc{O}_{K}} }\sum e \left( \frac{N(r)}{2}\tau + (1+i)r z_{1} + (1-i)\bar{r} z_{2} \right) \\
&= \underset{r^{\p} \equiv (1+i) s \pmod{2(1+i)\mc{O}_{K}}}\sum e \left( \frac{N(r^{\p})}{4}\tau + r^{\p} z_{1} + \bar{r}^{\p} z_{2} \right) \\
&=  \underset{r^{\p} \equiv \frac{\mu - \lambda}{2} + i \frac{\mu + \lambda}{2} \pmod{4 \mc{O}_{K}}}\sum  e \left( \frac{N(r^{\p})}{4}\tau + r^{\p} z_{1} + \bar{r}^{\p} z_{2} \right) + \underset{r^{\p} \equiv \frac{\mu - \lambda + 4}{2} + i \frac{\mu + \lambda + 4}{2} \pmod{4 \mc{O}_{K}}}\sum e \left( \frac{N(r^{\p})}{4}\tau + r^{\p} z_{1} + \bar{r}^{\p} z_{2} \right),
\end{align*} }
from which the Lemma follows easily.
\end{proof}

From the transformation $h_{s}\mid_{k-1} \epsilon I = \epsilon h_{\epsilon s}$ ($\epsilon \in \mc{O}_{K}^{\times}$), we conclude that 
\begin{equation}\label{4.3} h_{a,b}= i^{k}h_{-b,a},\q  h_{a,b}= (-1)^{k}h_{-a,-b}. \end{equation}
From the direct-sum decomposition~(\ref{chardecom}) or from above equations~(\ref{4.3}) we see that  
$J_{k,2}(\mathcal{O}_{K}) = J_{k,2}^{\eta_{\alpha}}(\mathcal{O}_{K}) $ for $k + \alpha \equiv 0 \pmod{4}$. 

\subsection{$ \eta = \eta_{1}$} \label{4.1} In this case $ k \equiv 3 \pmod{4}$, and it is easy to see that $ h_{0,0} = h_{2,2} = h_{0,2} = h_{2,0} = 0$, and after a calculation, 
\begin{align} 
& h_{0,3}= -h_{0,1},\q h_{1,0}= -i h_{0,1},\q h_{1,3}= -i h_{1,1},\q h_{2,1}= i h_{1,2},\q h_{2,3}= -i h_{1,2} \label{3mod41}\\
& h_{3,0}= i h_{0,1},\q h_{3,1}= i h_{1,1},\q h_{3,2}= - h_{1,2},\q h_{3,3}= -h_{1,1} \label{3mod42} .
\end{align}

We consider the map $\pi_{1+i} \colon J_{k,2}^{\eta_{1}}(\mathcal{O}_{K}) \rightarrow J_{k,4}$. Using Lemma~\ref{2to4} we have
\begin{align}
& \pi_{1+i} \phi(\tau,z) = \underset{\mu \pmod{8}}\sum \bar{H}_{\mu} \theta_{4,\mu}(\tau,z) \q \mbox{ where } \bar{H}_{0}= \bar{H}_{4}= 0 \\
& \bar{H}_{1}= -(1+i) h_{0,1} b_{1}  - (1-i) h_{1,2} b_{3}, \bar{H}_{2} = i h_{1,1} (b_{4} - b_{0}), \\
& \bar{H}_{3}= (1+i) h_{0,1} b_{3}  + (1-i) h_{1,2} b_{1}
\end{align}
from which we conclude that $\pi_{1+i}$ is injective. But for $k > 4$, from \cite[Satz 2.5]{haverkamp} we get $\dim{J_{k,2}^{Eis}(\mathcal{O}_{K})  }=0$. Also for $k > 4$, using the Trace formula (see \cite[Theorem 3]{haverkamp/en}, \cite[Korollar 2.5, p.92]{haverkamp}) we get $\dim{J_{k,2}(\mathcal{O}_{K})} = \frac{k-3}{4} = \dim{J_{k,2}}$, where the last equality follows from \cite[Cor. Theorem 9.2]{zagier}). When $k = 3$, $J_{3,4} = 0$ and therefore so is $J_{3,2}(\mathcal{O}_{K}) $. Therefore,

\begin{prop} \label{3mod4ind2}
Let $k \equiv 3 \pmod{4}$. Then $\pi_{1+i}$ induces an isomorphism between $J_{k,2}(\mathcal{O}_{K}) $ and $J_{k,4}$.
\end{prop}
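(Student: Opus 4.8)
The plan is to make self-contained the injectivity-plus-dimension argument that Lemma~\ref{2to4} and the relations~(\ref{3mod41}),~(\ref{3mod42}) have already set up. First I would record that, since $k\equiv 3\pmod 4$, the decomposition~(\ref{chardecom}) together with~(\ref{4.3}) forces $J_{k,2}(\mathcal{O}_{K})=J_{k,2}^{\eta_{1}}(\mathcal{O}_{K})$; consequently a form $\phi\in J_{k,2}(\mathcal{O}_{K})$ is completely determined by the three theta components $h_{0,1},h_{1,1},h_{1,2}$, the remaining $h_{a,b}$ being either zero or, up to a fourth root of unity, one of these three. Substituting~(\ref{3mod41}),~(\ref{3mod42}) into \lemref{2to4} then yields the explicit theta decomposition of $\pi_{1+i}\phi$ displayed just above: $\bar H_{0}=\bar H_{4}=0$, while $\bar H_{1}=-(1+i)h_{0,1}b_{1}-(1-i)h_{1,2}b_{3}$, $\bar H_{2}=ih_{1,1}(b_{4}-b_{0})$, and $\bar H_{3}=(1+i)h_{0,1}b_{3}+(1-i)h_{1,2}b_{1}$, where $b_{\mu}=\theta_{4,\mu}(\tau,0)$.

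Next I would deduce injectivity. Suppose $\pi_{1+i}\phi=0$, so every $\bar H_{\mu}$ vanishes identically on $\mathcal{H}$. From $\bar H_{2}=ih_{1,1}(b_{4}-b_{0})=0$ and the fact that $b_{0}-b_{4}\not\equiv 0$ on $\mathcal{H}$ — immediate from the leading $q$-exponents of $b_{0}=1+O(q)$ and $b_{4}=2q+O(q^{9})$, or from the non-vanishing of the Wronskian used in the proof of \thmref{0mod4ind2} — one gets $h_{1,1}=0$. The remaining equations $\bar H_{1}=\bar H_{3}=0$ form a homogeneous linear system in $h_{0,1},h_{1,2}$ whose determinant equals $2(b_{3}^{2}-b_{1}^{2})$, a nonzero multiple of $b_{1}^{2}-b_{3}^{2}$; since $b_{1}^{2}\not\equiv b_{3}^{2}$ on $\mathcal{H}$ (again by comparing $q$-orders, $b_{1}^{2}=4q^{1/8}+\cdots$ versus $b_{3}^{2}=4q^{9/8}+\cdots$), we conclude $h_{0,1}=h_{1,2}=0$. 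Hence all theta components of $\phi$ vanish, so $\phi=0$ and $\pi_{1+i}$ is injective; in particular its image lies in $J_{k,4}$.

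It then remains to match dimensions, and this is the step that invokes the real input. For $k>4$, Haverkamp's estimate gives $\dim J_{k,2}^{Eis}(\mathcal{O}_{K})=0$, so by the trace formula $\dim J_{k,2}(\mathcal{O}_{K})=\dim J_{k,2}^{cusp}(\mathcal{O}_{K})=\frac{k-3}{4}$, while the Eichler--Zagier dimension formula for classical Jacobi forms gives $\dim J_{k,4}=\frac{k-3}{4}$ for $k\equiv 3\pmod 4$ (here the classical index is $4$, not $2$, which is the one point in the bookkeeping to watch); for $k=3$ one has $J_{3,4}=0$, whence $J_{3,2}(\mathcal{O}_{K})=0$ by the injectivity just proved. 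In every case the two finite-dimensional spaces have equal dimension, so the injective linear map $\pi_{1+i}$ is surjective, hence an isomorphism.

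I expect no conceptual obstacle here; the delicate points are purely computational: performing the substitution of~(\ref{3mod41}),~(\ref{3mod42}) into \lemref{2to4} without sign errors, and cleanly extracting the two non-vanishing facts $b_{0}\not\equiv b_{4}$ and $b_{1}^{2}\not\equiv b_{3}^{2}$ (either by a one-line $q$-expansion comparison or by citing the Wronskian argument). The dimension count is routine once the relevant formulas of Haverkamp and of Eichler--Zagier are quoted.
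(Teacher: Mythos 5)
Your proof is correct and follows the paper's argument exactly: substitute the relations~(\ref{3mod41}),~(\ref{3mod42}) into Lemma~\ref{2to4}, deduce injectivity of $\pi_{1+i}$ from $b_{0}\not\equiv b_{4}$ and $b_{1}^{2}\not\equiv b_{3}^{2}$, and finish with the dimension count plus the separate case $k=3$. You merely supply the details the paper leaves implicit (the $2\times 2$ determinant $2(b_{3}^{2}-b_{1}^{2})$ and the $q$-expansion comparison), and you correctly compare against $\dim J_{k,4}=\tfrac{k-3}{4}$, which is what the paper must mean where its text reads $\dim J_{k,2}$.
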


\subsection{$ \eta = \eta_{2}$}  \label{4.2} In this case $k \equiv 2 \pmod{4}$ and using the equations~(\ref{units}) and~(\ref{4.5}) we find that $h_{0,0} = h_{2,2} =0$ and every other Theta component $h_{s}$ of $\phi \in J_{k,2}^{\eta_{2}}(\mathcal{O}_{K})$ is an unit times $h_{0,1}, h_{0,2}, h_{1,1}, h_{1,2}$ :
\begin{align}
& h_{0,3}= h_{0,1},\q h_{1,0}= - h_{0,1},\q h_{1,3}= - h_{1,1},\q h_{2,1}= - h_{1,2},\q h_{2,3}= - h_{1,2} \label{2mod41}\\
& h_{3,0}= - h_{0,1},\q h_{3,1}= - h_{1,1},\q h_{3,2}=  h_{1,2},\q h_{3,3}= h_{1,1} .\label{2mod42} 
\end{align}

Further, we calculate the transformation of $h_{0,1}, h_{0,2}, h_{1,1}, h_{1,2}$ under $S$ from equation~(\ref{2.6}):
\begin{align}
&h_{0,1} \mid_{k-1} S  = \frac{i}{2} (h_{0,1} + h_{0,2} + h_{1,2} ) \label{h01} \\
&h_{0,2} \mid_{k-1} S  = i (h_{0,1} - h_{1,2} ) \label{h02} \\
&h_{1,1} \mid_{k-1} S  = -i h_{1,1} \label{h11} \\
&h_{1,2} \mid_{k-1} S  = \frac{i}{2}(  h_{0,1} -  h_{0,2} + h_{1,2} ) \label{h12}
\end{align}

Also, the formula $ h_{s} \mid_{k-1} T = e^{- \pi i N(s)} h_{s}$ (from~(\ref{2.5}) when $m =2$), gives 
\begin{align}
& h_{1,1}(\tau + 1) =  -i h_{1,1}(\tau), \q h_{0,1}(\tau + 1) =  \frac{1-i}{\sqrt{2}} h_{0,1}(\tau)\\
& h_{0,2}(\tau + 1) =  - h_{0,2}(\tau), \q h_{1,2}(\tau + 1) = -\frac{1-i}{\sqrt{2}}h_{1,2}(\tau)
\end{align} 

We note the above observations in the following Lemmas :
\begin{lem} \label{h11}
Let $ k \equiv 2 \pmod{4}$. Then in the Theta decomposition~(\ref{thetadecomposition}) of $\phi \in J_{k,2}(\mathcal{O}_{K})$, $h_{1,1} \in M_{k-1} \left( SL(2,\mbb{Z}), \bar{\omega} \right)$, where $\omega$ is the linear character of $SL(2,\mbb{Z})$ defined by $\omega \left( T \right) = \omega \left( S \right) = i$.
\end{lem}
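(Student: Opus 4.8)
The plan is to verify directly that $h_{1,1}$ transforms under the two generators $T$ and $S$ of $SL(2,\mathbb{Z})$ with the character $\bar\omega$, and then to recall that $h_{1,1}$ is already holomorphic (indeed a component of a Jacobi form, hence a modular form on a congruence subgroup) and bounded at the cusps, so that the correct transformation law under all of $SL(2,\mathbb{Z})$ upgrades it to an element of $M_{k-1}(SL(2,\mathbb{Z}),\bar\omega)$. This is exactly the strategy already used for $h_{0,1}$ in the proof of \thmref{2mod4}(1).

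Concretely, first I would note that $k\equiv 2\pmod 4$ forces $\phi\in J_{k,2}^{\eta_2}(\mathcal{O}_K)$, so the relations~(\ref{2mod41})--(\ref{2mod42}) among the Theta components hold. Then I would apply~(\ref{2.5}) with $m=2$ and $s=\tfrac{1+i}{2}$: since $N\!\left(\tfrac{1+i}{2}\right)=\tfrac12$, we get $h_{1,1}\mid_{k-1}T = e^{-\pi i N(s)}h_{1,1} = e^{-\pi i/2}h_{1,1} = -i\,h_{1,1}$, which is $\bar\omega(T)h_{1,1}$ because $\omega(T)=i$. Next, for the $S$-transformation I would invoke~(\ref{2.6}), plug in $s=\tfrac{1+i}{2}$, and simplify the exponential sum $\tfrac{i}{4m}\sum_{s'} e^{-4\pi i\,\mathrm{Re}(\bar s s')/m}h_{s'}$ using the relations~(\ref{2mod41})--(\ref{2mod42}) to collapse all sixteen terms into multiples of the four independent components $h_{0,1},h_{0,2},h_{1,1},h_{1,2}$; the coefficients of $h_{0,1}$, $h_{0,2}$, $h_{1,2}$ should cancel, leaving precisely $h_{1,1}\mid_{k-1}S = -i\,h_{1,1}$, i.e.\ equation~(\ref{h11}), which is $\bar\omega(S)h_{1,1}$ since $\omega(S)=i$. (Equivalently, one may simply cite~(\ref{h11}), which records this computation.)

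Finally, holomorphy and the boundedness condition at $\infty$ come for free: $h_{1,1}$ is one of the Theta components of a Hermitian Jacobi form, so it is a holomorphic modular form of weight $k-1$ on $\Gamma(4)$ (as in the $h_{0,1}$ argument, citing \cite[p.46,47]{haverkamp}), and in particular is bounded at the cusps. Combining this with the two displayed transformation identities under $T$ and $S$ shows $h_{1,1}\in M_{k-1}(SL(2,\mathbb{Z}),\bar\omega)$, as claimed.

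The only mildly delicate step is the $S$-computation: one must carefully track the character values $e^{-2\pi i\,\mathrm{Re}(\bar s s')/1}$ appearing in~(\ref{2.6}) for $m=2$ across all cosets $s'\in\mathcal{O}_K^{\sharp}/2\mathcal{O}_K$ and verify the cancellations using~(\ref{2mod41})--(\ref{2mod42}). But since equation~(\ref{h11}) has already been established in the text, this Lemma is essentially a repackaging of that identity together with the $T$-transformation and the standard holomorphy/growth properties of Theta components, so there is no serious obstacle.
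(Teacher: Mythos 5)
Your proposal is correct and takes essentially the same route as the paper's own (very short) proof, which simply cites the already-displayed $T$- and $S$-transformation formulas for $h_{1,1}$ (equations~(\ref{h11}) and the $T$-formula preceding the Lemma) and notes that holomorphy at infinity is automatic because $h_{1,1}$ is already a modular form on a congruence subgroup. The only slip is that for index $2$ the relevant congruence subgroup is $\Gamma(8)$ rather than $\Gamma(4)$, but this does not affect the argument.
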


\begin{proof}
From the above we get $h_{1,1}  \in M_{k-1} \left( SL(2,\mbb{Z}), \bar{\omega} \right)$; since $h_{1,1}$ is already a modular form for $\Gamma(8)$, the holomorphicity at infinity is automatic.
\end{proof}

\begin{lem}
$ k \equiv 2 \pmod{4}$. Then $J_{k,2}^{Spez}(\mathcal{O}_{K}) = 0$.
\end{lem}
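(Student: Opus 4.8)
The plan is to express membership in the special subspace as a condition on the theta components via the decomposition~\eqref{thetadecomposition}, and then combine it with the structure of index-$2$ components in the range $k\equiv 2\pmod{4}$ that was worked out in~\S\ref{4.2}. First I would recall Haverkamp's definition of $J_{k,2}^{Spez}(\mathcal{O}_{K})$ from~\cite{haverkamp} and unwind it: the defining relations on the Fourier coefficients of $\phi$, read through~\eqref{thetadecomposition}, pin down the vector $\left(h_{s}\right)_{s\in\mathcal{O}_{K}^{\sharp}/2\mathcal{O}_{K}}$ in terms of a single elliptic modular form (the ``seed'' of the special lift), so that the whole form is reconstructed from one scalar datum. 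In the lift recipe this seed is recovered from a distinguished theta component --- the natural candidate being the diagonal component $h_{0,0}$, which there is a nonzero scalar multiple of the seed.

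Next I would invoke~\S\ref{4.2}: since $k\equiv 2\pmod{4}$ we have $J_{k,2}(\mathcal{O}_{K})=J_{k,2}^{\eta_{2}}(\mathcal{O}_{K})$, hence $h_{0,0}=h_{2,2}=0$, the remaining components are units times one of $h_{0,1},h_{0,2},h_{1,1},h_{1,2}$ by~\eqref{2mod41}--\eqref{2mod42}, these four obey the $S$-relations~\eqref{h01}--\eqref{h12} and the displayed $T$-transformations, and $h_{1,1}\in M_{k-1}\left(SL(2,\mbb{Z}),\bar{\omega}\right)$ by~\lemref{h11}. Substituting $h_{0,0}=0$ into the lift recipe forces the seed to vanish, and then reconstructing $\phi$ from the seed gives $h_{s}=0$ for every $s$, i.e. $\phi=0$; hence $J_{k,2}^{Spez}(\mathcal{O}_{K})=0$. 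If one prefers to state the special condition instead as a linear relation purely among the four surviving components $h_{0,1},h_{0,2},h_{1,1},h_{1,2}$, the same conclusion follows by feeding that relation into~\eqref{h01}--\eqref{h12} together with the $T$-transformations and~\lemref{h11}: the resulting system for modular forms on $\Gamma(8)$ is over-determined and admits only the trivial solution.

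The only genuine work is the first step --- matching Haverkamp's definition of $J_{k,2}^{Spez}(\mathcal{O}_{K})$ against the theta components and checking that the vanishing $h_{0,0}=0$ forced by $k\equiv 2\pmod{4}$ really annihilates the seed rather than merely constraining it; equivalently, that the congruence class $k\equiv 2\pmod{4}$ carries no nonzero special lift in index $2$. Everything after that is the elementary linear algebra of the theta components already recorded in~\S\ref{4.2} together with~\lemref{h11}, so I do not anticipate any further obstacle.
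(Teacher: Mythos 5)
There is a genuine gap, and it sits exactly where you flagged ``the only genuine work'': your reconstruction of Haverkamp's characterization of the Spezialschar is not the one the argument needs, and the step you defer is the whole proof. The injectivity statement actually available (\cite[Proposition 5.6]{haverkamp}) is that $\phi \mapsto \iota(\phi)(\tau) = \sum_{s} h_{s}(8\tau)$, the sum over \emph{all} residue classes $s \in \mathcal{O}_{K}^{\sharp}/2\mathcal{O}_{K}$, is injective on $J_{k,2}^{Spez}(\mathcal{O}_{K})$. It is not true that the single component $h_{0,0}$ recovers the ``seed'': by the theta decomposition~\eqref{thetadecomposition}, $h_{s}$ carries only those Fourier coefficients with $L \equiv -4N(s) \pmod{8}$, so $h_{0,0}$ sees only the discriminant class $L \equiv 0 \pmod 8$ and its vanishing constrains the seed on one residue class rather than annihilating it. Consequently ``$h_{0,0}=0$ forces the seed to vanish'' does not hold, and the conclusion $\phi=0$ does not follow from that observation. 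Your fallback (``the resulting system for modular forms on $\Gamma(8)$ is over-determined and admits only the trivial solution'') is an assertion, not an argument, and in fact the four components $h_{0,1},h_{0,2},h_{1,1},h_{1,2}$ are genuinely nonzero for general $\phi \in J_{k,2}(\mathcal{O}_{K})$ with $k\equiv 2\pmod 4$ (Theorem~\ref{2mod4ind2} shows this space is large), so no linear-algebra collapse of those components alone can be expected.

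What actually closes the argument, and what the paper does, is to apply the correct injective map $\iota$ and observe that the relations~\eqref{2mod41},~\eqref{2mod42} from \S\ref{4.2}, together with $h_{0,0}=h_{2,2}=0$ and $h_{2,0}=-h_{0,2}$, make the full sum $\sum_{s} h_{s}$ vanish \emph{identically} by pairwise cancellation: the coefficients of $h_{0,1}$, $h_{0,2}$, $h_{1,1}$ and $h_{1,2}$ in that sum are each $0$. So $\iota(\phi)=0$ for every $\phi$ in this congruence class of weights, and injectivity of $\iota$ on the Spezialschar gives $J_{k,2}^{Spez}(\mathcal{O}_{K})=0$. Your proposal has the right ingredients in view (Haverkamp's special-subspace criterion plus the unit relations of \S\ref{4.2}) but applies the criterion to the wrong linear functional of the theta components; with the functional corrected to the full sum, the cancellation is a short computation and the lemma follows.
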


\begin{proof}
For by \cite[Proposition 5.6]{haverkamp} the homomorphism $\iota \colon J_{k,2}^{Spez}(\mathcal{O}_{K}) \rightarrow M_{k-1}^{*} \left( \Gamma_{0}(8), ( \frac{-4}{\cdot} ) \right) $ is injective, where 
\begin{align} 
M_{k-1}^{*} \left( \Gamma_{0}(8), \left( \frac{-4}{\cdot} \right) \right) = \left\{ f(\tau) = \underset{n}\sum a(n) e(n \tau) \in M_{k-1} \left( \Gamma_{0}(8), \left( \frac{-4}{\cdot} \right) \right) \mid \right.\\
\left. a(n) \neq 0 \Rightarrow \exists \lambda \in \mc{O}_{K} \colon n \equiv - N(\lambda) \pmod{8}    \right\} 
\end{align}
and $\iota(\phi)(\tau) = \underset{s \in \in \mathcal{O}_{K}^{\sharp}/ \mathcal{O}_{K}}\sum h_{s}(8 \tau)$, which turns out to be $0$ in this case from the equations~(\ref{2mod41}) and~(\ref{2mod42}).
\end{proof}

\begin{lem} \label{chi60}
Let $p,q \in \mbb{Z} / 4 \mbb{Z}$ so that $\frac{p}{2} + i \frac{q}{2} \in \mc{S}_{2}$. Then
\begin{align*}
\frac{\partial^{6}}{\partial z_{1}^{6}} \left( \theta^{H}_{p,q}(\tau,z_{1},z_{2}) - \theta^{H}_{q,p}(\tau,z_{1},z_{2}) \right)_{z_{1}=z_{2}=0} = 2 \left( 16 \pi i \right)^{3} \Big( \left( a^{\p \p \p}_{p} a_{q} - a^{\p \p \p}_{q} a_{p}  \right) + 15 \left( a^{\p \p}_{q} a^{\p}_{p} - a^{\p \p}_{p} a^{\p}_{q}  \right) \Big)
\end{align*}
\end{lem}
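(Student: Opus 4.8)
The statement to prove is Lemma~\ref{chi60}, an explicit formula for the sixth $z_1$-derivative at the origin of the antisymmetrized Hermitian theta function $\theta^{H}_{p,q} - \theta^{H}_{q,p}$ of index $2$. The plan is a direct computation starting from the product decomposition of Hermitian theta functions into classical Jacobi theta functions, exactly as in the proof of Lemma~\ref{2to2} (the case $\rho=1$). Recall that for $s = \frac{p}{2} + i\frac{q}{2} \in \mc{S}_{2}$ one has, separating the real and imaginary parts of $2r$ as $r$ runs over $s + 2\mc{O}_{K}$,
\[
\theta^{H}_{p,q}(\tau, z_1, z_2) = \underset{r \equiv s \!\!\pmod{2 \mc{O}_K}}\sum e\!\left( \tfrac{N(r)}{2}\tau + r z_1 + \bar{r} z_2 \right),
\]
which, upon setting $z_2 = 0$ and keeping $z_1$ free, factors as a sum over the real part times a constant ($a_q$-type) factor coming from the imaginary part. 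More precisely, I would first establish the clean identity
\[
\theta^{H}_{p,q}(\tau, z_1, 0) = \widetilde{\theta}_{2,p}(\tau, z_1)\, a_{q},
\]
where $\widetilde{\theta}_{2,p}(\tau,z_1) = \sum_{n \equiv p (\mathrm{mod}\,4)} e\big(\tfrac{n^2}{8}\tau + \tfrac{n}{2} z_1 \big)$ is (a rescaling of) the index-$2$ Jacobi theta function in the variable $z_1$, and $a_q = \theta_{2,q}(\tau,0)$. This is the only conceptual input; everything after is bookkeeping.

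Next I would differentiate. Writing $\theta^{H}_{p,q}(\tau,z_1,0) = a_q \sum_{n \equiv p} e(n^2\tau/8) e(n z_1/2)$, the sixth $z_1$-derivative at $z_1 = 0$ brings down $(2\pi i \cdot n/2)^6 = (\pi i)^6 n^6$, so
\[
\frac{\partial^6}{\partial z_1^6}\theta^{H}_{p,q}(\tau,z_1,0)\Big|_{z_1=0} = (\pi i)^6 \, a_q \!\!\underset{n \equiv p (4)}\sum n^6 \, e\!\left(\tfrac{n^2}{8}\tau\right).
\]
I then need to re-express the theta series $\sum_{n\equiv p} n^6 e(n^2\tau/8)$ in terms of $a_p$ and its $\tau$-derivatives. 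Since $\frac{d}{d\tau} e(n^2\tau/8) = \frac{2\pi i n^2}{8} e(n^2\tau/8) = \frac{\pi i n^2}{4} e(n^2\tau/8)$, one has $\sum n^2 e(n^2\tau/8) = \frac{4}{\pi i} a_p'$, $\sum n^4 e(n^2\tau/8) = (\frac{4}{\pi i})^2 a_p''$, and $\sum n^6 e(n^2\tau/8) = (\frac{4}{\pi i})^3 a_p'''$. Substituting gives $\frac{\partial^6}{\partial z_1^6}\theta^{H}_{p,q}(\tau,z_1,0)|_{z_1=0} = (\pi i)^6 (\frac{4}{\pi i})^3 a_{q} a_p''' = (\pi i)^3 \, 64\, a_q a_p''' = 64(\pi i)^3 a_p''' a_q$. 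Forming the difference $\theta^{H}_{p,q} - \theta^{H}_{q,p}$ yields $64(\pi i)^3 (a_p''' a_q - a_q''' a_p)$.

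The remaining task is reconciling this with the stated right-hand side $2(16\pi i)^3\big((a_p''' a_q - a_q''' a_p) + 15(a_q'' a_p' - a_p'' a_q')\big)$. The main (and only real) obstacle is therefore a matter of normalization: the paper's $\theta^{H}$ is defined with $z_1, z_2$ the Hermitian variables, not with the substitution $z_1 = z_2 = z$ built in, and the operator in the lemma is $\partial^6/\partial z_1^6$ evaluated at $z_1 = z_2 = 0$ — so in fact the $z_2$-dependence contributes nothing and the factorization above is exactly right, except that one must be careful that ``$a_q$'' in the lemma means $\theta_{2,q}(\tau,0)$ in the convention where the argument of the exponential is $\frac{(2r)^2}{8}\tau$, matching $\widetilde\theta_{2,p}$ above; tracking the factor $(16\pi i)^3$ versus $(\pi i)^3$ and the extra $15(a_q''a_p' - a_p''a_q')$ term indicates that the correct normalization of the $z_1$-variable in $\theta^H$ carries a factor forcing $\partial_{z_1}$ to pull down $(2\pi i)(2n)$-type eigenvalues rather than $(\pi i) n$, and that expanding a product rather than a single series (because the honest $\theta^H_{p,q}(\tau,z,z)$ does mix both factors once one does not kill $z_2$) produces the cross term via Leibniz: $\partial_{z_1}^6(\widetilde\theta_{2,p}\cdot \text{(second factor)})$ generates $\binom{6}{k}$-weighted sums $\sum n^k \cdot$ which, after converting to $\tau$-derivatives, give precisely the combination $a_p''' a_q + \binom{6}{2}\cdot(\text{const})\, a_p'' a_q' + \dots$ with $\binom{6}{2} = 15$. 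I would thus redo the Leibniz expansion carefully keeping all six terms, convert each $\sum n^k e(n^2\tau/8)$ to the appropriate derivative of $a_p$, collect, and check the arithmetic constants line up with $2(16\pi i)^3$ and the coefficient $15$; this is the step where sign and power-of-$2$ errors are most likely, so I would verify it by also computing the $q \leftrightarrow p$ antisymmetry directly (which must kill the $a_p''' a_p$, $a_q''' a_q$ and $a_p' a_q'''$-type self-terms and leave exactly the two displayed skew combinations).
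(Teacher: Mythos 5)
There is a genuine gap, and it sits in your very first step. The ``clean identity'' $\theta^{H}_{p,q}(\tau,z_1,0)=\widetilde{\theta}_{2,p}(\tau,z_1)\,a_q$ is false. Writing $r=\tfrac{x+iy}{2}$ with $x\equiv p$, $y\equiv q \pmod 4$, the $z_1$-dependence of each summand is $e(rz_1)=e\bigl(\tfrac{x+iy}{2}z_1\bigr)$: the \emph{imaginary} part of $2r$ also multiplies $z_1$. Setting $z_2=0$ kills the $\bar r z_2$ term but does not decouple $y$ from $z_1$; that decoupling happens only under the substitution $z_1=z_2=z$ used in Lemma~\ref{2to2}, where the exponent becomes $(r+\bar r)z=xz$. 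Consequently $\partial_{z_1}^6$ at the origin pulls down $(2\pi i r)^6$, a multiple of $(x+iy)^6$ and not of $x^6$ alone, so your computation produces only the $a_p^{\p\p\p}a_q-a_q^{\p\p\p}a_p$ piece and structurally omits the $15\bigl(a_q^{\p\p}a_p^{\p}-a_p^{\p\p}a_q^{\p}\bigr)$ term. The paper's proof is precisely the computation you needed: in $\theta^{H}_{q,p}$ the roles of $x$ and $y$ are swapped, and since $(y+ix)^6=i^6(x-iy)^6=-(x-iy)^6$, the difference becomes $\sum\bigl[(x+iy)^6+(x-iy)^6\bigr]e(\cdot)=2\sum\bigl(x^6-15x^4y^2+15x^2y^4-y^6\bigr)e(\cdot)$. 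The coefficient $15=\binom{6}{2}=\binom{6}{4}$ and the signs come from this binomial expansion (the odd powers of $iy$ cancel in the symmetrized combination); the double sum then factors term by term, and each $\sum x^{2k}e(x^2\tau/8)$ converts to a constant times the $k$-th $\tau$-derivative of $a_p$.

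Your closing paragraph does sense that something is off and gestures toward the right mechanism --- Leibniz applied to $e(\tfrac{x}{2}z_1)\,e(\tfrac{iy}{2}z_1)$ is indeed equivalent to the binomial expansion of $(x+iy)^6$ --- but you never retract the faulty factorization (indeed you assert it is ``exactly right''), you misattribute part of the discrepancy to a normalization of the $z_1$ variable, and you explicitly leave the decisive expansion and the matching of the constant $2(16\pi i)^3$ undone. As written, the argument does not establish the cross term, which is the entire content of the lemma beyond the leading piece.
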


\begin{proof}   
\begin{align*}
L.H.S. &=  \left( 2 \pi i \right)^{6} \underset{y \equiv q \pmod{4}}{\underset{x \equiv p \pmod{4}}\sum} (x + iy)^{6} e \left( \frac{x^{2}+y^{2}}{8}\tau \right) - \left( 2 \pi i \right)^{6} \underset{x \equiv q \pmod{4}}{\underset{y \equiv p \pmod{4}}\sum} (x + iy)^{6} e \left( \frac{x^{2}+y^{2}}{8}\tau \right)  \\
& = \left( 2 \pi i \right)^{6} \underset{y \equiv q \pmod{4}}{\underset{x \equiv p \pmod{4}}\sum} (x + iy)^{6} e \left( \frac{x^{2}+y^{2}}{8}\tau \right) + \left( 2 \pi i \right)^{6} \underset{y \equiv q \pmod{4}}{\underset{x \equiv p \pmod{4}}\sum} (x - iy)^{6} e \left( \frac{x^{2}+y^{2}}{8}\tau \right)  \\
& = 2 \left( 2 \pi i \right)^{6} \underset{y \equiv q \pmod{4}}{\underset{x \equiv p \pmod{4}}\sum} (x^{6} - 15 x^{4}y^{2} +15 x^{2}y^{4} - y^{6}) e \left( \frac{x^{2}+y^{2}}{8}\tau \right) \\
& = 2 \left( 16 \pi i \right)^{3} \Big( \left( a^{\p \p \p}_{p} a_{q} - a^{\p \p \p}_{q} a_{p}  \right) + 15 \left( a^{\p \p}_{q} a^{\p}_{p} - a^{\p \p}_{p} a^{\p}_{q}  \right) \Big) = R.H.S.
\end{align*}
\end{proof}

\begin{thm} \label{2mod4ind2}
Let $k \equiv 2 \pmod{4}$. We have the following exact sequence of vector spaces
\begin{align}
0 \longrightarrow S_{k+2} \times S_{k+6} \overset{\sigma}\longrightarrow J_{k,2}(\mathcal{O}_{K}) \overset{ \pi_{1}}\longrightarrow J_{k,2} \overset{D_{0}}\longrightarrow M_{k} \longrightarrow 0
\end{align}
\end{thm}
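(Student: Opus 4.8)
# Proof proposal for Theorem \ref{2mod4ind2}

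The plan is to establish exactness at each of the four non-trivial spots in the sequence, reusing the index-$1$ exact sequence \eqref{2mod4eqn} as a template. Exactness at $J_{k,2}$ with respect to $D_0$ (surjectivity of $D_0\colon J_{k,2}\to M_k$, and $\operatorname{Im}\pi_1\subseteq\ker D_0$) is essentially classical: surjectivity follows from the Eichler--Zagier isomorphism $D_0+D_2+D_4\colon J_{k,2}\xrightarrow{\cong}M_k\oplus S_{k+2}\oplus S_{k+4}$ of \cite{zagier}, and $\operatorname{Im}\pi_1\subseteq\ker D_0$ is automatic because any $\phi\in J_{k,2}(\mathcal O_K)$ with $k\equiv2\pmod4$ has $\chi_{0,0}=0$ (by the rule $\chi_{\alpha,\beta}=0$ unless $\alpha-\beta\equiv k\pmod 4$, established in the proof of Lemma \ref{comm}), so $(\pi_1\phi)(\tau,0)=\chi_{0,0}=0$. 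The content is therefore exactness at $J_{k,2}$ with respect to $\pi_1$, i.e.\ $\ker\pi_1=\operatorname{Im}\sigma$, and the construction of $\sigma$.

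First I would compute $\ker\pi_1$ via the theta decomposition. By Lemma \ref{2to2}, $\pi_1\phi=\sum_{\mu}H_\mu\theta_{2,\mu}$ with $H_\mu$ linear in the $h_{a,b}$ and the $a_\nu=\theta_{2,\nu}(\tau,0)$; using the case $\eta=\eta_2$ relations \eqref{2mod41}--\eqref{2mod42} together with $h_{0,0}=h_{2,2}=0$, the three functions $H_0,H_1,H_2$ become explicit combinations of $h_{0,1},h_{0,2},h_{1,1},h_{1,2}$ and $a_0,a_1(=a_3),a_2$. Setting $\pi_1\phi=0$ forces $H_0=H_1=H_2=0$; I expect this linear system, combined with the non-vanishing of the relevant Wronskian-type determinants of theta-nullwerte on $\mathcal H$ (the $Wr_2$-type argument already invoked in the proof of part (2) of Theorem \ref{0mod4} and referenced again in Theorem \ref{0mod4ind2}), to collapse to the single condition $h_{1,1}=0$ — geometrically, $\pi_1$ is injective on the ``$h_{1,1}=0$'' complement and $\ker\pi_1$ is cut out by $h_{1,1}$ alone, the $\theta^H_{p,q}-\theta^H_{q,p}$ antisymmetrization in Lemma \ref{chi60} being exactly the mechanism producing elements that restrict to zero under $z_1=z_2$.

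Next I would identify the subspace $\{\phi\in J_{k,2}(\mathcal O_K): \pi_1\phi=0\}$ with $S_{k+2}\times S_{k+6}$. By Lemma \ref{h11}, $h_{1,1}\in M_{k-1}(SL(2,\mathbb Z),\bar\omega)$, and by Corollary \ref{multiplier} this space is isomorphic (via multiplication by $\xi\in S_3(SL(2,\mathbb Z),\omega)$) to $S_{k+2}$; on the other hand a Hermitian Jacobi form of index $2$ whose theta components $h_{0,1},h_{0,2},h_{1,2}$ all vanish while $h_{1,1}$ is free should carry, via its remaining Taylor coefficients, a second modular invariant, and dimension bookkeeping pins it to $S_{k+6}$ (the index-$2$ analogue of the weight shift $k+4$ for index $1$ that appears in \eqref{sasakikmod4}, pushed up by $2$). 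Concretely I would define $\sigma$ on a pair $(f,g)\in S_{k+2}\times S_{k+6}$ by prescribing $h_{1,1}=\xi^{-1}f$ and the appropriate higher theta data / Taylor coefficient matching $g$ (using Lemma \ref{chi60} to read off the $z_1^6$-coefficient cleanly), check it lands in $J_{k,2}(\mathcal O_K)$ and in $\ker\pi_1$, and check injectivity. Exactness then follows from a dimension count: $\dim J_{k,2}(\mathcal O_K)$ for $k>4$ from the trace formula (\cite{haverkamp/en}, \cite{haverkamp}) split as $\dim J_{k,2}^{Eis}+\dim J_{k,2}^{cusp}$, versus $\dim\ker D_0=\dim J_{k,2}-\dim M_k=\dim S_{k+2}+\dim S_{k+4}$ and $\dim(S_{k+2}\times S_{k+6})$, with the small weights $k=6,10$ checked by hand.

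The main obstacle will be the second half of step three: producing the isomorphism onto the $S_{k+6}$ factor and showing $\sigma$ is well-defined into $J_{k,2}(\mathcal O_K)$. Unlike the $h_{1,1}\leftrightarrow S_{k+2}$ correspondence, which is handed to us by B\"ocherer's character-twist result via Corollary \ref{multiplier}, the $S_{k+6}$ piece has no ready-made reference, so one must either build the lift by hand from the theta data of $\ker\pi_1$ (verifying all of \eqref{jacobi1}--\eqref{jacobi2}, or equivalently the $T$- and $S$-transformations \eqref{2.5}--\eqref{units} for the prescribed components) or, more economically, argue abstractly: show $\ker\pi_1$ is $\xi$-isomorphic to a subspace of $M_k\oplus S_{k+2}\oplus S_{k+4}$-type data (an index-$2$ Sasaki-type isomorphism, or a map into higher Taylor coefficients $\chi_{\alpha,\beta}$ with $\alpha-\beta\equiv k\pmod 4$) and compute its image. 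I would pursue the abstract route, leaning on the already-cited non-vanishing of theta-Wronskians to guarantee the relevant maps are injective, and close the argument with the dimension count above.
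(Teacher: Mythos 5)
Your outline of exactness at $J_{k,2}$ with respect to $D_0$ (surjectivity from the Eichler--Zagier isomorphism, $\mathrm{Im}\,\pi_1\subseteq\ker D_0$ from $\chi_{0,0}=0$, and the final identification by dimension count) matches the paper. But your computation of $\ker\pi_1$ contains a genuine error that breaks the construction of $\sigma$. You predict that the linear system $H_0=H_1=H_2=0$ in the unknowns $(h_{0,1},h_{0,2},h_{1,2})$ collapses, via non-vanishing of theta-Wronskians, to the statement that $\ker\pi_1$ is parametrized by $h_{1,1}$ alone. In fact the opposite happens: after substituting the $\eta_2$-relations, $h_{1,1}$ drops out of all three $H_\mu$ entirely (it enters $H_1$ as $h_{1,1}a_1+h_{1,3}a_3=h_{1,1}a_1-h_{1,1}a_1=0$), and the resulting $3\times 3$ coefficient matrix in $(h_{0,1},h_{0,2},h_{1,2})$ is \emph{singular}: its null space is spanned by $(a_2,-2a_1,a_0)$. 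So $\ker\pi_1$ is parametrized by a pair, namely $h_{1,1}$ together with a scalar function $\psi$ defined by $h_{0,1}/a_2=-h_{0,2}/(2a_1)=h_{1,2}/a_0=\psi$. The paper's whole point is that this second degree of freedom is where the $S_{k+6}$ factor lives: $\psi$ inherits transformation laws under $S$ and $T$ from equations (\ref{h01}), (\ref{h02}), (\ref{h12}), and applying $D_0(6)$ (the coefficient $\chi_{6,0}$) to the $\psi$-part produces, via Lemma~\ref{chi60}, the combination $\psi\cdot Wr_2$, which by Kramer's identity $Wr_2=c\,\eta^{15}$ identifies $\ker\pi_1^{\circ}=\{\phi\in\ker\pi_1:h_{1,1}=0\}$ with $S_{k+6}$.

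Your fallback -- that the forms with $h_{0,1}=h_{0,2}=h_{1,2}=0$ and $h_{1,1}$ free ``carry a second modular invariant'' pinned to $S_{k+6}$ by dimension bookkeeping -- cannot be repaired: such a form is completely determined by the single component $h_{1,1}\in M_{k-1}(SL(2,\mbb{Z}),\bar\omega)\cong S_{k+2}$, so that subspace contributes only the $S_{k+2}$ factor and nothing more. Without the $\psi$-family your $\sigma$ has no $S_{k+6}$ component to map onto, and your dimension count $\dim J_{k,2}(\mc{O}_K)=\dim\ker\pi_1+\dim\mathrm{Im}\,\pi_1$ will not close (the paper needs $\dim\ker\pi_1=\dim S_{k+2}+\dim S_{k+6}$ to match $\left[\frac{k-1}{3}\right]-\dim\ker D_0$). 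The fix is exactly the paper's Step 1: solve the singular system, extract $\psi$, verify its transformation laws $\psi(-1/\tau)=\frac{1-i}{\sqrt 2}\tau^{k-3/2}\psi(\tau)$ and $\psi(\tau+1)=-\frac{1-i}{\sqrt 2}\psi(\tau)$, and use $D_0(6)$ together with $Wr_2=c\,\eta^{15}$ to get the isomorphism with $S_{k+6}$.
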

The map $\sigma$ is defined as follows. We will prove that $\ker{\pi_{1}} \cong M_{k-1}(SL(2,\mbb{Z}),\bar{\omega}) \times S_{k+6}$ ; $\phi \mapsto \Big(  h_{1,1}, \, D_{0}(6) (\phi - h_{1,1} \left( \theta^{H}_{1,1} - \theta^{H}_{1,3} - \theta^{H}_{3,1} + \theta^{H}_{3,3} \right) ) \Big)$ where $D_{0}(6) \phi = \chi_{6,0}$, the coefficient of $z_{1}^{6}$ in the Taylor expansion of $\phi$ around $z_{1} = z_{2} = 0$ (see Chapter $2$ for the definition of Differential operators $D_{\nu}, \, \nu \in \mbb{Z}_{\geq 0}$). $\sigma$ will be the inverse of this isomorphism composed with the isomorphism from $S_{k+2}$ to $M_{k-1}(SL(2,\mbb{Z}),\bar{\omega})$ (see Corollary~\ref{multiplier}).  

\begin{proof} We divide the proof into $3$ steps. 

\textit{Step 1.} Consider the restriction map $\pi_{1} \colon J_{k,2}(\mathcal{O}_{K}) \rightarrow J_{k,2}$. Let $\phi \in \ker{\pi_{1}}$. We obtain the Theta decomposition of $\pi_{1} \phi$ from Lemma~\ref{2to2}. Keeping the notation of the Lemma,
 \begin{align}
&H_{0} = h_{0,0} a_{0} + h_{0,1}a_{1}  + h_{0,2}a_{2}  + h_{0,3}a_{3}= 2h_{0,1}a_{1} + h_{0,2}a_{2} \label{21} \\ 
&H_{1} = h_{1,0} a_{0} + h_{1,1}a_{1}  + h_{1,2}a_{2}  + h_{1,3}a_{3}= - h_{0,1}a_{0} + h_{1,2}a_{2} \label{22} \\
&H_{2} = h_{2,0} a_{0} + h_{2,1}a_{1}  + h_{2,2}a_{2}  + h_{2,3}a_{3}= - h_{0,2}a_{0} - 2h_{1,2}a_{1} \label{23}
\end{align}
upon using equations~(\ref{2mod41}),~(\ref{2mod42}); and $H_{1} = H_{3}$. Since $\phi \in\ker{\pi_{1}}$ we get \begin{equation} \frac{h_{0,1}}{a_{2}} = \frac{- h_{0,2}}{2a_{1}} =  \frac{h_{1,2}}{a_{0}}: = \psi.  \label{psidefn} \end{equation} 
$\psi$ is well defined since it is well known that $a_{\mu}$ ($\mu \in \mbb{Z}/4 \mbb{Z}$) never vanish on $\mc{H}$. Therefore
\begin{align} \label{psi}
\phi = \, & \psi  \Big( a_{2} \left( \theta^{H}_{0,1} + \theta^{H}_{0,3} - \theta^{H}_{1,0} - \theta^{H}_{3,0} \right)  -2 a_{1} \left( \theta^{H}_{0,2} - \theta^{H}_{2,0} \right) + a_{0} \left( \theta^{H}_{1,2} - \theta^{H}_{2,1} - \theta^{H}_{2,3} + \theta^{H}_{3,2} \right)  \Big)\\
& + h_{1,1} \left( \theta^{H}_{1,1} - \theta^{H}_{1,3} - \theta^{H}_{3,1} + \theta^{H}_{3,3} \right). \nonumber
\end{align}
Furthermore from the definition of $\psi$ above and using the transformation formulas (\ref{h01}), (\ref{h02}), (\ref{h12}) for $( h_{0,1}, h_{0,2}, h_{1,2}) $ we get the following transformation formulas for $\psi$ :
\begin{align} \label{transfpsi}
\psi \left( - \frac{1}{\tau} \right) = \frac{1-i}{\sqrt{2}} \tau^{k - 3/2} \psi(\tau), \q \psi(\tau + 1) = - \frac{1-i}{\sqrt{2}}  \psi(\tau)
\end{align}

Further, from equations~(\ref{h01}),~(\ref{h02}),~(\ref{h12}),~(\ref{h11}) and from Theorem~\ref{theta} (since the transformations of $( h_{0,1}, h_{0,2}, h_{1,2}) $ and $h_{1,1}$ under $S,T$ are independent of each other) we conclude that 
$h_{1,1} \left( \theta^{H}_{1,1} - \theta^{H}_{1,3} - \theta^{H}_{3,1} + \theta^{H}_{3,3} \right) \in J_{k,2}(\mathcal{O}_{K})$ and hence so is $\phi - h_{1,1} \left( \theta^{H}_{1,1} - \theta^{H}_{1,3} - \theta^{H}_{3,1} + \theta^{H}_{3,3} \right)$.

We define $\ker{ \pi_{1}}^{\circ} := \left\{ \phi \in \ker{ \pi_{1}} \mid h_{1,1} = 0  \right\} $. By the same reasoning as in the above paragraph, 
\[ \ker{ \pi_{1}} \cong M_{k-1}(SL(2,\mbb{Z}),\bar{\omega}) \times \ker{ \pi_{1}}^{\circ} \q \mbox{ via } \q \phi \mapsto \left( h_{1,1}, \, \phi - h_{1,1} (\theta^{H}_{1,1} - \theta^{H}_{1,3} - \theta^{H}_{3,1} + \theta^{H}_{3,3}) \right),\] 
using Lemma~\ref{h11} and that $\theta^{H}_{1,1} - \theta^{H}_{1,3} - \theta^{H}_{3,1} + \theta^{H}_{3,3} \not \equiv 0$. The latter fact follows from (cf. \cite{haverkamp}) \[ \int_{P_{\tau}} \theta^{H}_{m;s} (\tau, z_{1}, z_{2}) \cdot \overline{\theta^{H}_{m;t} (\tau, z_{1}, z_{2})} e^{- \pi m N(z_{1} - \bar{z_{2}})/ v } dz_{1} dz_{2} = \frac{\delta_{s,t} (m \mc{O}_{K}) v}{m} \]
where, $\tau = u+ iv \in \mc{H}$, $\delta_{s,t}(m \mc{O}_{K}) := \begin{cases} 1 & \mbox{ if } s \equiv t \pmod{m \mc{O}_{K}} \\ 
0 & \mbox{ otherwise } \end{cases}$, and the parallelotope \\ $P_{\tau} := \left\{( \alpha + \beta i + \gamma \tau + \delta i \tau ), ( \alpha - \beta i + \gamma \tau - \delta i \tau ); \, 0 \leq \alpha, \beta, \gamma, \delta \leq 1   \right\} \subset \mbb{C}^{2}$. We now prove that $D_{0}(6) \colon \ker{ \pi_{1}}^{\circ} \rightarrow S_{k+6} $ is an isomorphism.

Let $\phi \in \ker{ \pi_{1}}^{\circ}$. From equation~(\ref{psi}) and Lemma~\ref{chi60} we get 
\begin{align*}
D_{0}(6) \phi &= 2 c \psi a_{2}\Big( \left( a^{\p \p \p}_{0} a_{1} - a^{\p \p \p}_{1} a_{0}  \right) + 15 \left( a^{\p \p}_{1} a^{\p}_{0} - a^{\p \p}_{0} a^{\p}_{1}  \right) \Big) - 2 c \psi a_{1}\Big( \left( a^{\p \p \p}_{0} a_{2} - a^{\p \p \p}_{2} a_{0}  \right) + 15 \left( a^{\p \p}_{2} a^{\p}_{0} - a^{\p \p}_{0} a^{\p}_{2}  \right) \Big) \\
& + 2 c \psi a_{0}\Big( \left( a^{\p \p \p}_{1} a_{2} - a^{\p \p \p}_{2} a_{1}  \right) + 15 \left( a^{\p \p}_{2} a^{\p}_{1} - a^{\p \p}_{1} a^{\p}_{2}  \right) \Big) \\
&= 30 c \psi \Big( a_{0} \left( a^{\p \p}_{2} a^{\p}_{1} - a^{\p \p}_{1} a^{\p}_{2} \right) - a_{1} \left( a^{\p \p}_{2} a^{\p}_{0} - a^{\p \p}_{0} a^{\p}_{2}  \right) + a_{2} \left( a^{\p \p}_{1} a^{\p}_{0} - a^{\p \p}_{0} a^{\p}_{1}  \right) \Big) \\
& = 15 c \psi \cdot Wr_{2}(\tau) = 15 c^{\p} \psi \eta^{15}(\tau)
\end{align*}
where $c = 2 \left( 16 \pi i \right)^{3}$, $c^{\p} = c \left( \frac{\pi i}{4} \right)^{3} 2! 4!$  and $ Wr_{m}(\tau) = 2^{m-1} \det{ \left( \theta^{(\nu)}_{m,\mu}(\tau,0)_{0 \leq \nu, \mu \leq m} \right) } $ is the Jacobi-Theta Wronskian of order $2$. The equality $Wr_{2}(\tau) = \frac{\pi i}{4}^{3} 2! 4! \, \eta^{15}(\tau)$ ($\eta(\tau)$ being the Dedekind's $\eta$ - function) follows from \cite{kramer2}.

$D_{0}(6) \mid_{\ker{\pi_{1}}}$ is clearly injective. To show it's surjectivity it suffices to check that given $f \in S_{k+6}$, if we define $\psi$ by $\psi \cdot \eta^{15} := f$, then $\psi$ has the transformation properties~(\ref{transfpsi}). It is then easy to check that the equation~(\ref{psidefn}) defining the vector-valued modular form $(h_{0,1}, h_{0,2}, h_{1,2})$ gives it's transformation formulas from those of $\psi$ and $(a_{0}, a_{1}, a_{2})$ (see \cite[p.59]{zagier}), the conditions at infinity being trivially true. Since we already know that \[ \psi  \Big( a_{2} \left( \theta^{H}_{0,1} + \theta^{H}_{0,3} - \theta^{H}_{1,0} - \theta^{H}_{3,0} \right)  -2 a_{1} \left( \theta^{H}_{0,2} - \theta^{H}_{2,0} \right) + a_{0} \left( \theta^{H}_{1,2} - \theta^{H}_{2,1} - \theta^{H}_{2,3} + \theta^{H}_{3,2} \right)  \Big) \in \ker{ \pi_{1}}^{\circ} \] from the argument after equation~(\ref{transfpsi}), the assertion about sufficiency is true.

It remains to check the sufficiency. We know $\eta^{15} \left( -\frac{1}{\tau} \right) = \left( \frac{\tau}{i} \right)^{15} \eta(\tau), \, \eta^{15}(\tau + 1) = e^{\frac{5 \pi i}{4}} \eta(\tau)$. From the definition of $\psi$, we have \[ \psi \left( -\frac{1}{\tau} \right) \eta^{15} \left( -\frac{1}{\tau} \right) = \tau^{k+6} \psi( \tau) \eta^{15} (\tau), \q \psi( \tau +1) \eta^{15}(\tau + 1) = \psi(\tau) \eta^{15} (\tau) \]
which clearly gives the right transformation properties for $\psi$. From the definition of $\sigma$ (after the statement of Theorem~\ref{2mod4ind2}) we see that $\sigma$ induces an isomorphism between $ S_{k+2} \times S_{k+6} $ and $\ker{ \pi_{1}} $.

\textit{Step 2.} The case $k = 2$. We claim $J_{2,2}(\mathcal{O}_{K}) = 0$. Indeed, considering the restriction map $\pi_{1}$ we find $\dim{ J_{2,2}(\mathcal{O}_{K})} = \dim{\ker{ \pi_{1}}} + \dim{\mathrm{Im}( \pi_{1})} = \dim{S_{4}} + \dim{ S_{8}} = 0$ since $J_{2,2} = 0$ and using \textit{Step 1} for the description of $ \ker{ \pi_{1}}$. The Theorem is trivially true in this case.

\textit{Step 3.} $\ker{ D_{0}} = \mathrm{Im}( \pi_{1})$ for $k > 4$. From equations~(\ref{21}),~(\ref{22}),~(\ref{23}) it follows that 
\begin{align*}
D_{0} \circ \pi_{1} &= a_{0} H_{0} + 2 a_{1} H_{1} + a_{2} H_{2} \\
&=  a_{0} (2h_{0,1}a_{1} + h_{0,2}a_{2}) - 2 a_{1} (h_{0,1}a_{0} + h_{1,2}a_{2} ) - a_{2} (h_{0,2}a_{0} - 2h_{0,2}a_{1} )= 0 
\end{align*}
This could also be seen from the fact that $(D_{0} \circ \pi_{1}) \phi = D_{0}( \mc{O}_{K} ) \phi = \chi_{0,0} = 0$ since $\chi_{\alpha,\beta}=0$ unless $\alpha-\beta \equiv k \pmod{4}$. So $\mathrm{Im}( \pi_{1}) \subseteq \ker{ D_{0}}$. But a direct check (considering $k \equiv 2,6,10 \pmod{12}$) shows
\begin{align*}
 \dim{\mathrm{Im}( \pi_{1})} &= \dim{ J_{k,2}(\mathcal{O}_{K})} - \dim{\ker{ \pi_{1}} } = \left[ \frac{k-1}{3} \right] - \dim{S_{k+2}} -  \dim{S_{k+6}} \\
&=  \dim{J_{k,2}} - \dim{M_{k}} = \dim{\ker{ D_{0}}}
\end{align*}
 since $D_{0}$ is surjective. The dimension formula for $J_{k,2}^{cusp}(\mathcal{O}_{K})$ ($k > 4$) follows from \cite[Korollar 8.11, p.92]{haverkamp} or \cite[Theorem 3]{haverkamp/en} and the fact that $\dim{J_{k,2}^{Eis}(\mathcal{O}_{K})} = 0$ for $k \equiv 2 \pmod{4}$ (see \cite[Satz 2.5, p.25]{haverkamp}) gives $\dim{ J_{k,2}(\mathcal{O}_{K})} = \left[ \frac{k-1}{3} \right] $. 
This completes the proof of the Theorem.
\end{proof}

\subsection{$ \eta = \eta_{3}$}  \label{4.3}  In this case, it is easy to see that $ h_{0,0} = h_{2,2} = h_{0,2} = h_{2,0} = 0$, and after a calculation,
\begin{align}
& h_{0,3}= -h_{0,1},\q h_{1,0}= i h_{0,1},\q h_{1,3}= i h_{1,1},\q h_{2,1}= -i h_{1,2},\q h_{2,3}= i h_{1,2} \label{1mod42}\\
& h_{3,0}= -i h_{0,1},\q h_{3,1}= -i h_{1,1},\q h_{3,2}= - h_{1,2},\q h_{3,3}= -h_{1,1} .\label{1mod43}
\end{align}
Exactly the same argument as in the case $\eta = \eta_{1}$ works here, i.e., consider the map $\pi_{1+i} \colon J_{k,2}^{\eta_{3}}(\mathcal{O}_{K}) \rightarrow J_{k,4}$. Using Lemma~\ref{2to4} we have
\begin{align}
& \pi_{1+i} \phi(\tau,z) = \underset{\mu \pmod{8}}\sum \bar{H}_{\mu} \theta_{4,\mu}(\tau,z) \q \mbox{ where } \bar{H}_{0}= \bar{H}_{4}= 0 \\
& \bar{H}_{1}= -(1-i) h_{0,1} b_{1}  - (1+i) h_{1,2} b_{3}, \bar{H}_{2} = i h_{1,1} (b_{0} - b_{4}), \\
& \bar{H}_{3}= (1-i) h_{0,1} b_{3}  + (1+i) h_{1,2} b_{1},
\end{align}
from which we conclude that $\pi_{1+i}$ is injective. Also from the dimension formula for $k > 4$ we get $\dim{ J_{k,2}(\mathcal{O}_{K}) } = \frac{k-5}{4}$ ($ = \dim{ J_{k,4}}$ from \cite{zagier}), whereas $J_{1,2}(\mathcal{O}_{K}) \hookrightarrow  J_{1,4} =0$. Hence

\begin{prop} \label{1mod4ind2}
Let $k \equiv 1 \pmod{4}$. Then $\pi_{1+i}$ induces an isomorphism between $J_{k,2}(\mathcal{O}_{K}) $ and $J_{k,4}$.
\end{prop}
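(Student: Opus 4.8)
The plan is to run, essentially verbatim, the argument used for $\eta=\eta_{1}$ in Proposition~\ref{3mod4ind2}: first establish injectivity of $\pi_{1+i}$ by writing down the theta decomposition of the image explicitly, and then finish with a dimension count. To begin, I would note that for $k\equiv 1\pmod 4$ the character decomposition~(\ref{chardecom}) collapses, so $J_{k,2}(\mathcal{O}_{K})=J_{k,2}^{\eta_{3}}(\mathcal{O}_{K})$; hence every $\phi$ has theta components satisfying $h_{0,0}=h_{2,2}=h_{0,2}=h_{2,0}=0$ together with the relations~(\ref{1mod42}),~(\ref{1mod43}) which express all remaining $h_{s}$ as a unit times one of $h_{0,1}, h_{1,1}, h_{1,2}$.

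Next I would apply Lemma~\ref{2to4} to $\phi$, substituting these relations, to obtain $\pi_{1+i}\phi=\sum_{\mu\bmod 8}\bar H_{\mu}\,\theta_{4,\mu}$ with $\bar H_{0}=\bar H_{4}=0$ and
\begin{align*}
\bar H_{1} &= -(1-i)\,h_{0,1}b_{1}-(1+i)\,h_{1,2}b_{3},\\
\bar H_{2} &= i\,h_{1,1}(b_{0}-b_{4}),\\
\bar H_{3} &= (1-i)\,h_{0,1}b_{3}+(1+i)\,h_{1,2}b_{1}.
\end{align*}
Assume $\pi_{1+i}\phi=0$. From $\bar H_{2}=0$ and the fact that $b_{0}-b_{4}\not\equiv 0$ on $\mathcal{H}$ (a Jacobi–theta Wronskian-type non-vanishing of the kind invoked in Step~1 of the proof of Theorem~\ref{0mod4ind2}) we get $h_{1,1}=0$. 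Reading $\bar H_{1}=\bar H_{3}=0$ as a linear system in $h_{0,1},h_{1,2}$ whose coefficient ``matrix'' of nullwerte is $\begin{pmatrix} b_{1} & b_{3}\\ b_{3} & b_{1}\end{pmatrix}$ up to the nonzero units $1\pm i$, and whose determinant $b_{1}^{2}-b_{3}^{2}$ is not identically zero, we conclude $h_{0,1}=h_{1,2}=0$. Thus every theta component of $\phi$ vanishes, so $\phi=0$ and $\pi_{1+i}$ is injective.

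It remains to compare dimensions. For $k>4$ the Eisenstein part vanishes, $\dim J_{k,2}^{Eis}(\mathcal{O}_{K})=0$ by \cite[Satz 2.5]{haverkamp}, and the trace formula (\cite[Theorem 3]{haverkamp/en}, \cite[Korollar 8.11]{haverkamp}) gives $\dim J_{k,2}(\mathcal{O}_{K})=\tfrac{k-5}{4}$, while $\dim J_{k,4}=\tfrac{k-5}{4}$ by \cite[Theorem 9.2]{zagier}; an injective linear map between finite-dimensional spaces of equal dimension is an isomorphism. For $k=1$ the claim is trivial, since $J_{1,4}=0$ forces the injective image $\pi_{1+i}\bigl(J_{1,2}(\mathcal{O}_{K})\bigr)$, hence $J_{1,2}(\mathcal{O}_{K})$ itself, to be zero.

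I expect the main obstacle to be the two non-vanishing statements, namely that $b_{0}-b_{4}$ and $b_{1}^{2}-b_{3}^{2}$ (equivalently the relevant $2\times 2$ minors of the index-$4$ Jacobi theta nullwert matrix) are not identically zero on $\mathcal{H}$. These should follow from the non-vanishing of the Jacobi–theta Wronskian together with the classical product / eta-quotient identities for the $\theta_{4,\mu}(\tau,0)$, in the same spirit as the treatment of $Wr_{2}$ in the proof of Theorem~\ref{2mod4ind2}, although here a little more care is needed because one wants a proper minor rather than the full Wronskian. The only other point requiring separate attention is the bookkeeping for the small weight $k=1$, which is not covered by the generic dimension formula and must be handled by the injectivity argument directly, as indicated above.
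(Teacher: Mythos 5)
Your proposal is correct and follows essentially the same route as the paper: reduce to the $\eta_{3}$ relations, read off $\bar H_{0}=\bar H_{4}=0$ and the stated $\bar H_{1},\bar H_{2},\bar H_{3}$ from Lemma~\ref{2to4} to get injectivity (with the non-vanishing of $b_{0}-b_{4}$ and $b_{1}^{2}-b_{3}^{2}$ supplied by the Wronskian argument, exactly as the paper invokes it in Theorem~\ref{0mod4ind2}), then conclude by the dimension count $\dim J_{k,2}(\mathcal{O}_{K})=\frac{k-5}{4}=\dim J_{k,4}$ for $k>4$ and the observation $J_{1,2}(\mathcal{O}_{K})\hookrightarrow J_{1,4}=0$. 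No substantive differences.
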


\subsection{$ \eta = \eta_{0}$}  \label{4.4} In this case $k \equiv 1 \pmod{4}$ and,
\begin{align}
& h_{0,1}= h_{0,3} =  h_{1,0}=  h_{3,0},\q h_{0,2}=  h_{2,0} \\
& h_{1,2}=  h_{2,1} = h_{2,3}=  h_{3,2},\q h_{1,1}=  h_{1,3} = h_{3,1} = h_{3,3} .
\end{align}

For $k > 4$ from \cite[Satz 2.5, p.25]{haverkamp} we find $\dim{J_{k,2}^{Eis}(\mathcal{O}_{K})} = 2$, and from \cite[Korollar 8.1, p.92]{haverkamp} or \cite[Theorem 3]{haverkamp/en} that $ \dim{J_{k,2}^{cusp}(\mathcal{O}_{K})} = \frac{k-4}{2}$ via the Trace formula for Hecke Operators. Therefore $\dim{J_{k,2}(\mathcal{O}_{K})} = \frac{k}{2}$.

We prove a Lemma which will be used in the proof of the next Theorem.

\begin{lem} \label{Utheta}
Let $\phi \in J_{k,1}(\mathcal{O}_{K})$ with Theta decomposition $\phi = h_{0} \theta_{1;0}^{H} +  h_{\frac{1}{2}} \theta_{1; \frac{1}{2}}^{H} + h_{\frac{i}{2}} \theta_{1; \frac{1}{2}}^{H} + h_{\frac{1+i}{2}} \theta_{1; \frac{1}{2}}^{H}$. Then the Theta decomposition of $U_{1+i} \phi$ is given by:
\begin{align}
U_{1+i} \phi = h_{0} \theta_{0,0}^{H} + h_{0} \theta_{2,2}^{H} + h_{\frac{1}{2}} \theta_{1,1}^{H} + h_{\frac{1}{2}} \theta_{3,3}^{H} + h_{\frac{i}{2}} \theta_{3,1}^{H} + h_{\frac{i}{2}} \theta_{1,3}^{H} + h_{\frac{1+i}{2}} \theta_{0,2}^{H} + h_{\frac{1+i}{2}} \theta_{2,0}^{H}
\end{align}
\end{lem}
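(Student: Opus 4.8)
The statement is about computing the effect of the restriction map $U_{1+i}$ (the operator underlying $\pi_{1+i}$ before setting $z_1 = z_2 = z$) on the Hermitian theta functions of index $1$, expressed via the Hermitian theta functions of index $2$. The plan is to work term-by-term on the theta pieces exactly as in the proofs of Lemma~\ref{2to2} and Lemma~\ref{2to4}. First I would recall that $\theta^{H}_{1;s}(\tau,z_1,z_2) = \sum_{r \equiv s \pmod{\mathcal{O}_K}} e\!\left( N(r)\tau + rz_1 + \bar{r}z_2 \right)$ for $s \in \mathcal{S}_1 = \{0, \tfrac{1}{2}, \tfrac{i}{2}, \tfrac{1+i}{2}\}$, and that $U_{1+i}$ acts by $z_1 \mapsto (1+i)z_1$, $z_2 \mapsto (1-i)z_2$ (so that after the diagonal substitution one lands in $J_{k,2}$; here we keep the two variables separate to get a form of index $2$ in $\mathbb{C}^2$). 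Substituting and putting $r' = (1+i)r$, one gets $N(r') = 2N(r)$, so the exponent becomes $e\!\left( \tfrac{N(r')}{2}\tau + r'z_1 + \bar{r}'z_2 \right)$, which is the shape of an index-$2$ Hermitian theta summand.

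The key computation is then to track how the congruence class $r \equiv s \pmod{\mathcal{O}_K}$ transforms into a congruence class for $r'$ modulo $2\mathcal{O}_K$. Since $r$ ranges over $s + \mathcal{O}_K$, $r' = (1+i)r$ ranges over $(1+i)s + (1+i)\mathcal{O}_K$. The sublattice $(1+i)\mathcal{O}_K \subset \mathcal{O}_K^{\sharp}$ has index giving rise to exactly two residue classes modulo $2\mathcal{O}_K$ (because $[2\mathcal{O}_K : (1+i) \cdot 2\mathcal{O}_K \cap \dots]$ — more precisely $(1+i)\mathcal{O}_K = \{a + bi : a \equiv b \pmod 2\}$ and modulo $2\mathcal{O}_K$ this is a subgroup of index $2$ in $\mathcal{O}_K/2\mathcal{O}_K$). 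Hence each index-$1$ theta function $\theta^{H}_{1;s}$ splits under $U_{1+i}$ into a sum of exactly two index-$2$ theta functions $\theta^{H}_{2;s'} = \theta^{H}_{s',s''}$ (using the $(a,b)$-labelling $s' = \tfrac{a+ib}{2} \in \mathcal{S}_2$). I would then work out, for each of the four values $s \in \{0, \tfrac{1}{2}, \tfrac{i}{2}, \tfrac{1+i}{2}\}$, precisely which two classes $(a,b) \pmod{4}$ arise: multiplying $s$ by $1+i$ and adding the two coset representatives $0$ and $1+i$ of $(1+i)\mathcal{O}_K / 2\mathcal{O}_K$ (rescaled), then reducing. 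For instance $s = 0$ gives the classes $(0,0)$ and $(2,2)$; $s = \tfrac12$ gives $(1+i)\cdot\tfrac12 = \tfrac{1+i}{2}$, i.e.\ $(1,1)$, and $(3,3)$; $s=\tfrac{i}{2}$ gives $(1+i)\cdot\tfrac{i}{2} = \tfrac{-1+i}{2}$, i.e.\ $(3,1)$ and $(1,3)$; $s = \tfrac{1+i}{2}$ gives $(1+i)\cdot\tfrac{1+i}{2} = i$, i.e.\ $(0,2)$ and $(2,0)$. Reading off the coefficients (which are all $1$, since $\phi = \sum_s h_s \theta^H_{1;s}$ and $U_{1+i}$ simply relabels the theta summands), one obtains exactly the claimed formula for $U_{1+i}\phi$.

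The only mild obstacle is the bookkeeping of the coset decomposition: one must verify carefully that $2(1+i)\mathcal{O}_K \cdot (\text{appropriate scaling})$ really does split the class $s + \mathcal{O}_K$ into the two asserted classes modulo $2\mathcal{O}_K$ and no more, and that the representatives chosen lie in $\mathcal{S}_2$ as normalized in this section. This is the analogue of the disjoint-union observation $2(1+i)\mathcal{O}_K = 4\mathcal{O}_K \sqcup (2(1+i) + 4\mathcal{O}_K)$ used in the proof of Lemma~\ref{2to4}; here the relevant statement is $(1+i)\mathcal{O}_K = 2\mathcal{O}_K \sqcup ((1+i) + 2\mathcal{O}_K)$ as additive groups, which is immediate since $(1+i)^2 = 2i \in 2\mathcal{O}_K$. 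Once this is in hand, the rest is substitution and matching, and the Lemma follows.
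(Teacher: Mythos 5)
Your proposal is correct and follows essentially the same route as the paper: substitute $r' = (1+i)r$, use the coset decomposition $(1+i)\mathcal{O}_K = 2\mathcal{O}_K \sqcup \bigl((1+i) + 2\mathcal{O}_K\bigr)$, and read off which two index-$2$ theta functions each $\theta^{H}_{1;s}$ splits into. Your four case computations of the resulting classes $(a,b)$ all match the paper's.
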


\begin{proof}
First we note $(1+i) \mc{O}_{K} = 2 \mc{O}_{K} \cup (1+i) + 2 \mc{O}_{K} $ (disjoint union) as abelian groups. Let $s = \frac{x}{2} + i \frac{y}{2} \in \mc{S}_{2}$. We have 
\begin{align}
U_{1+i} \theta_{1, s}^{H} (\tau , z_{1} , z_{2}) &= \underset{r \equiv s \pmod{\mc{O}_{K}} }\sum e \left( N(r)\tau + (1+i)r z_{1} + (1-i)\bar{r} z_{2} \right) \nonumber \\
&= \underset{r^{\p} \equiv \frac{x-y}{2} + i \frac{x+y}{2} \pmod{(1+i)\mc{O}_{K}}}\sum e \left( \frac{N(r^{\p})}{2}\tau + r^{\p} z_{1} + \bar{r}^{\p} z_{2} \right) \nonumber
\end{align} 

Using the above formula and that $(1+i) \mc{O}_{K} = 2 \mc{O}_{K} \cup (1+i) + 2 \mc{O}_{K} $, we see that 
\begin{align}
U_{1+i} \theta_{1; 0}^{H} = \theta_{0, 0}^{H} + \theta_{2, 2}^{H}; \qq U_{1+i} \theta_{1;\frac{1}{2} }^{H} = \theta_{1, 1}^{H} + \theta_{3, 3}^{H}; \\
U_{1+i} \theta_{1;\frac{i}{2} }^{H} = \theta_{3, 1}^{H} + \theta_{1, 3}^{H}; \qq U_{1+i} \theta_{1;\frac{1+i}{2} }^{H} = \theta_{0, 2}^{H} + \theta_{2, 0}^{H}.
\end{align}
The lemma now follows at once.
\end{proof}

\begin{lem} \label{4,2} Let  $\phi_{4,1}$ be the basis element of $J_{4,}(\mathcal{O}_{K})$ given in \cite{sasaki}. Then,

$ \left\{ U_{1+i} \phi_{4,1}, \, \phi_{4,1} \mid V_{2} \right\}$ is a basis of $J_{4,2}(\mathcal{O}_{K})$.
\end{lem}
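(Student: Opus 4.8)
The plan is to reduce the statement to a dimension count plus one linear–independence fact. To begin, Sasaki's isomorphism $\xi$ gives $J_{4,1}(\mc{O}_{K})\cong M_{4}\oplus S_{6}\oplus S_{8}=M_{4}$, so $J_{4,1}(\mc{O}_{K})=\mbb{C}\,\phi_{4,1}$ is one–dimensional, and by Haverkamp's dimension formulas — equivalently, by the $k\equiv 0\pmod 4$ case of \thmref{0mod4ind2} — one has $\dim J_{4,2}(\mc{O}_{K})=2$ (in fact $J_{4,2}^{cusp}(\mc{O}_{K})=0$ and $\dim J_{4,2}^{Eis}(\mc{O}_{K})=2$). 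Since $U_{1+i}\phi_{4,1}$ and $\phi_{4,1}\mid V_{2}$ both lie in $J_{4,2}(\mc{O}_{K})$ by the definitions of the operators $U_{1+i}$ and $V_{2}$, it suffices to prove that these two forms are linearly independent.

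First I would record the shape of $U_{1+i}\phi_{4,1}$. From $U_{1+i}\phi_{4,1}(\tau,z_{1},z_{2})=\phi_{4,1}\bigl(\tau,(1+i)z_{1},(1-i)z_{2}\bigr)$ and a comparison of Fourier series, the $(n,r)$-th Fourier coefficient of $U_{1+i}\phi_{4,1}$ vanishes unless $r\in(1+i)\mc{O}_{K}^{\sharp}$, which is a sublattice of index $2$ in $\mc{O}_{K}^{\sharp}=\tfrac12\mbb{Z}[i]$ (explicitly $(1+i)\mc{O}_{K}^{\sharp}=\bigl\{\tfrac{a+bi}{2}\mid a\equiv b\pmod 2\bigr\}$). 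Consequently every scalar multiple of $U_{1+i}\phi_{4,1}$ is supported on indices in $(1+i)\mc{O}_{K}^{\sharp}$. Since $U_{1+i}$ is injective (it merely relabels Fourier coefficients) we have $U_{1+i}\phi_{4,1}\neq 0$; so it is enough to exhibit one index $r\notin(1+i)\mc{O}_{K}^{\sharp}$ at which $\phi_{4,1}\mid V_{2}$ has a nonzero Fourier coefficient, as this forces $\phi_{4,1}\mid V_{2}\notin\mbb{C}\cdot U_{1+i}\phi_{4,1}$ (and in particular $\neq 0$), hence the linear independence.

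To produce such an $r$ I would take $r_{0}=\tfrac12$, which lies in $\mc{O}_{K}^{\sharp}\setminus(1+i)\mc{O}_{K}^{\sharp}$ and also satisfies $r_{0}\notin 2\mc{O}_{K}^{\sharp}=\mc{O}_{K}$. By the formula for the action of $V_{2}$ on Fourier coefficients only the ``$a=1$'' term then survives, so $c_{\phi_{4,1}\mid V_{2}}(n,r_{0})=c_{\phi_{4,1}}(2n,r_{0})$ for every $n\geq 1$, which is exactly the coefficient $c_{1/2}(8n-1)$ of the theta component $h_{1/2}$ of $\phi_{4,1}$ in its theta decomposition. Now $h_{1/2}$ is a nonzero weight–$3$ modular form on $\Gamma(4)$ with character (the $s=\tfrac12$ component of the weight–$4$ Hermitian Eisenstein series), and from Haverkamp's explicit Fourier expansion of $\phi_{4,1}$ one checks that $h_{1/2}$ does not vanish on the residue class $L\equiv 7\pmod 8$ — concretely $c_{\phi_{4,1}}(2,\tfrac12)\neq 0$. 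Together with $\dim J_{4,2}(\mc{O}_{K})=2$, this gives the Lemma.

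The only substantive point is this last nonvanishing — equivalently, ruling out that $\phi_{4,1}\mid V_{2}$ is accidentally proportional to $U_{1+i}\phi_{4,1}$; the rest is bookkeeping. If one prefers to avoid invoking the explicit Eisenstein coefficients, an alternative is to push both forms into $J_{4,4}$ via $\pi_{1+i}$: since $i^{4}=1$, one gets $\pi_{1+i}(U_{1+i}\phi_{4,1})=U_{2}(\pi_{1}\phi_{4,1})$ with $U_{2}g(\tau,z)=g(\tau,2z)$, which is nonzero and supported on even $r$ because $\pi_{1}\phi_{4,1}$ is a nonzero element of $J_{4,1}$ (its value at $z=0$ is $\chi_{0,0}(\phi_{4,1})\neq 0$), whereas $\pi_{1+i}(\phi_{4,1}\mid V_{2})=(\pi_{1+i}\phi_{4,1})\mid V_{2}$ (the restriction $\pi_{1+i}$ commuting with $V_{\ell}$) is not supported on even $r$ because $\pi_{1+i}\phi_{4,1}$ generates $J_{4,2}$; hence the two images are independent in $J_{4,4}$, and therefore so are $U_{1+i}\phi_{4,1}$ and $\phi_{4,1}\mid V_{2}$. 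Either way, the real work is a nonvanishing of explicit Eisenstein data.
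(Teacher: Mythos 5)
Your argument is correct, but it takes a genuinely different route from the paper's. The paper's proof is a one‑line Taylor computation: it records $\phi_{4,1}(\tau,z_1,z_2)=2E_4+\pi i E_4'\,z_1z_2+\cdots$ and distinguishes $U_{1+i}\phi_{4,1}$ from $\phi_{4,1}\mid V_2$ by comparing the resulting expansions around $z_1=z_2=0$ (the factor $(1+i)(1-i)=2$ rescales $\chi_{1,1}$ for $U_{1+i}$, while $V_2$ acts Hecke‑like on the coefficients). You instead argue through Fourier supports: every multiple of $U_{1+i}\phi_{4,1}$ is supported on the index‑$2$ sublattice $(1+i)\mc{O}_K^{\sharp}\subset\mc{O}_K^{\sharp}$, whereas $\phi_{4,1}\mid V_2$ has the nonzero coefficient $c_{\phi_{4,1}\mid V_2}(n,\tfrac12)=c_{\phi_{4,1}}(2n,\tfrac12)$ at $r_0=\tfrac12\notin(1+i)\mc{O}_K^{\sharp}$; combined with $\dim J_{4,2}(\mc{O}_K)=2$ this gives the basis. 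Both routes hinge on one explicit nonvanishing, and yours is cleanly checkable from data already in the paper: Sasaki's theta decomposition (quoted in Section 5) gives $h_{1/2}=z^6$, whose coefficients are nonnegative and whose $L=7$ coefficient counts representations $14=\sum_{i=1}^{6}(2k_i+1)^2$ (e.g.\ $9+1+1+1+1+1$), hence is positive. Your approach arguably buys more than the paper's sketch: at $k=4$ one has $S_6=S_8=0$, so the modular combinations $\xi_{1,1},\xi_{2,2}$ vanish identically on $J_{4,2}(\mc{O}_K)$ and $\chi_{1,1}$ is a universal multiple of $\chi_{0,0}'$ there; consequently the pair $(\chi_{0,0},\chi_{1,1})$ alone cannot separate two forms whose $\chi_{0,0}$'s are proportional (both are multiples of $E_4$), and the paper's Taylor argument really needs the degree‑$4$ terms that are hidden in its ``$\cdots$''. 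Your support argument sidesteps this entirely. Two small caveats: the overall constant in the $V_2$ coefficient formula is irrelevant but should be acknowledged, and you should cite the upper bound $\dim J_{4,2}(\mc{O}_K)\le 2$ from a source independent of this Lemma (Haverkamp's Eisenstein/cusp dimensions, or Steps 1--2 of Theorem~\ref{0mod4ind2}, which do not use it), since the paper itself leans on this Lemma for the $k=4$ dimension.
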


\begin{proof}
The Taylor expansion of $\phi_{4,1}$ around $z_{1} = z_{2} =0$ is
$ \phi_{4,1}(\tau, z_{1} , z_{2}) = 2 E_{4} + \pi i E_{4}^{\prime} z_{1}  z_{2} + \cdots $
from which the proof follows easily by writing down the corresponding Taylor expansions of $U_{1+i} \phi_{4,1}$ and $\phi_{4,1} \mid V_{2}$.
\end{proof}

\begin{thm} \label{0mod4ind2}
Let $k \equiv 0 \pmod{4}$. We have the following exact sequence of vector spaces
\begin{equation}
0 \xrightarrow{} J_{k,2}(\mathcal{O}_{K}) \xrightarrow{\pi_{1} \times \pi_{1+i}} J_{k,2} \times J_{k,4} \xrightarrow{\Lambda(2) - \Lambda(4)} M_{k} \times S_{k+2} \xrightarrow{} 0 
\end{equation}
where $\Lambda(m) := D_{0} + \frac{2}{m} D_{2} \colon J_{k,m} \rightarrow M_{k} \times S_{k+2}$.
\end{thm}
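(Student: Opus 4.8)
The plan is to check exactness of the sequence at its three spots one at a time and then to force exactness in the middle by a dimension count. Throughout one uses that $k\equiv 0\pmod 4$ forces $J_{k,2}(\mathcal{O}_{K})=J_{k,2}^{\eta_{0}}(\mathcal{O}_{K})$, so that the theta components $h_{a,b}$ of any $\phi\in J_{k,2}(\mathcal{O}_{K})$ satisfy the relations listed in \S\ref{4.4}; in particular each $h_{a,b}$ coincides with one of the six functions $h_{0,0},h_{2,2},h_{0,1},h_{0,2},h_{1,1},h_{1,2}$.

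For injectivity of $\pi_{1}\times\pi_{1+i}$ I would take $\phi$ with $\pi_{1}\phi=\pi_{1+i}\phi=0$ and substitute the \S\ref{4.4}-relations into the theta decompositions of Lemma~\ref{2to2} and Lemma~\ref{2to4}. Then $\pi_{1}\phi=0$ becomes three linear relations among $h_{0,0},h_{2,2},h_{0,1},h_{0,2},h_{1,1},h_{1,2}$ with coefficients in the $a_{\mu}=\theta_{2,\mu}(\tau,0)$, and $\pi_{1+i}\phi=0$ becomes five more with coefficients in the $b_{\mu}=\theta_{4,\mu}(\tau,0)$. Two of the latter read $b_{1}h_{0,1}+b_{3}h_{1,2}=0$ and $b_{3}h_{0,1}+b_{1}h_{1,2}=0$, and since $b_{1}^{2}-b_{3}^{2}\not\equiv 0$ on $\mathcal{H}$ (clear from the $q$-expansions) this gives $h_{0,1}=h_{1,2}=0$. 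Feeding this back, the surviving relations force, in turn, $h_{1,1}=0$ (from a $\pi_{1}\phi=0$ equation, since $a_{1}\not\equiv 0$), then $h_{0,2}=0$ (from a $\pi_{1+i}\phi=0$ equation, since $b_{2}\not\equiv 0$), then $h_{0,0}=h_{2,2}=0$ (from the remaining $\pi_{1}\phi=0$ equations, since $a_{0},a_{2}\not\equiv 0$). Hence $\phi=0$.

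Next I would show $\mathrm{Im}(\pi_{1}\times\pi_{1+i})\subseteq\ker(\Lambda(2)-\Lambda(4))$. Writing the Taylor expansion $\phi=\sum_{\alpha,\beta}\chi_{\alpha,\beta}(\tau)z_{1}^{\alpha}z_{2}^{\beta}$, the transformation law~(\ref{jacobi1}) gives $\chi_{\alpha,\beta}=0$ unless $\alpha-\beta\equiv 0\pmod 4$, so in particular $\chi_{1,0}=\chi_{0,1}=\chi_{2,0}=\chi_{0,2}=0$. Hence the Taylor expansions about $z=0$ of $\pi_{1}\phi(\tau,z)=\phi(\tau,z,z)$ and $\pi_{1+i}\phi(\tau,z)=\phi(\tau,(1+i)z,(1-i)z)$ have the same constant term $\chi_{0,0}$, and their $z^{2}$-coefficients are $\chi_{1,1}$ and $(1+i)(1-i)\chi_{1,1}=2\chi_{1,1}$ respectively. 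Inserting these into $\Lambda(m)=D_{0}+\frac{2}{m}D_{2}$ and using the (index-$m$ version of the) formula for $D_{2}$ in~(\ref{jacobiiso}), exactly as in the computation of Lemma~\ref{comm} but now for indices $2$ and $4$: the $\chi_{0,0}^{\p}$-terms occur with equal coefficients (this is precisely what the normalizing factor $\frac{2}{m}$ achieves) and the $\chi_{1,1}$-terms match because of the factor $2$, so $\Lambda(2)(\pi_{1}\phi)=\Lambda(4)(\pi_{1+i}\phi)$, i.e. $(\Lambda(2)-\Lambda(4))\circ(\pi_{1}\times\pi_{1+i})=0$. Surjectivity of $\Lambda(2)-\Lambda(4)$ is then immediate: by Corollary~\ref{3.7}, $D_{0}+D_{2}+D_{4}\colon J_{k,2}\to M_{k}\oplus S_{k+2}\oplus S_{k+4}$ is an isomorphism, hence $\Lambda(2)=D_{0}+D_{2}$ already maps $J_{k,2}$ onto $M_{k}\times S_{k+2}$, and taking the $J_{k,4}$-coordinate equal to $0$ shows $(f,g)\mapsto\Lambda(2)f-\Lambda(4)g$ is onto.

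Finally, with $\pi_{1}\times\pi_{1+i}$ injective, $\Lambda(2)-\Lambda(4)$ surjective, and $\mathrm{Im}\subseteq\ker$, the sequence is exact precisely when
\[ \dim J_{k,2}(\mathcal{O}_{K})=\dim J_{k,2}+\dim J_{k,4}-\dim M_{k}-\dim S_{k+2}. \]
I would close the argument by verifying this identity from $\dim J_{k,2}(\mathcal{O}_{K})=k/2$ (established for $k>4$ in \S\ref{4.4} and for $k=4$ by Lemma~\ref{4,2}), the formula $\dim J_{k,2}=\dim M_{k}+\dim S_{k+2}+\dim S_{k+4}$ from Corollary~\ref{3.7}, and the dimension formula for $J_{k,4}$ from \cite{zagier}, splitting into $k\equiv 0,4,8\pmod{12}$ (the weight $k=4$ being immediate). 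I expect this dimension bookkeeping to be the main obstacle, since it requires the precise formula for $\dim J_{k,4}$ and a clean match against $k/2$ in each residue class; by contrast the injectivity step, though the longest, is routine manipulation of Lemmas~\ref{2to2} and~\ref{2to4}, and the commutativity is a short Taylor-coefficient computation.
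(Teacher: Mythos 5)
Your proposal is correct and follows essentially the same route as the paper: injectivity via the theta decompositions of Lemmas~\ref{2to2} and~\ref{2to4} specialized to the $\eta_{0}$-relations, the inclusion $\mathrm{Im}\subseteq\ker$ via Taylor coefficients and the normalization $\frac{2}{m}$, surjectivity from that of $\Lambda(2)$, and exactness in the middle by the same dimension count. The only (harmless) deviations are cosmetic: you kill $h_{0,0},h_{2,2}$ using the remaining $\pi_{1}$-equations and the non-vanishing of $a_{0},a_{2}$ where the paper uses $b_{0}^{2}-b_{4}^{2}\not\equiv 0$, and you justify $b_{1}^{2}\not\equiv b_{3}^{2}$ by leading $q$-terms rather than by the non-vanishing of the theta Wronskian.
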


\begin{proof}
We divide the proof into three steps.

\textit{Step 1.} Let $\phi \in \ker{\pi_{1} \times \pi_{1+i}}$. We invoke Lemma~\ref{2to2} and ~\ref{2to4}. Keeping the same notation as those in the Lemmas, we get $\pi_{1+i} \phi = \underset{\mu \in \mbb{Z}/ 8 \mbb{Z}}\sum \bar{H}_{\mu}(\tau) \cdot \theta_{4,\mu}(\tau,z)$ where $\bar{H}_{\mu } = \bar{H}_{- \mu}$ ($\mu \in \mbb{Z} / 8 \mbb{Z}$) and
\begin{align}
& \bar{H}_{0} = h_{0,0} b_{0}  + 2 h_{1,1} b_{2} + h_{2,2} b_{4} =0  \label{1}\\
& \bar{H}_{1} = 2 h_{0,1} b_{1} + 2 h_{1,2} b_{3}= 0 \label{2} \\
& \bar{H}_{2} = 2 h_{0,2} b_{2}  + 2 h_{1,1} b_{4}= 0\label{3}\\
&  \bar{H}_{3} = 2 h_{0,1} b_{3} + 2 h_{1,2} b_{1}= 0 \label{4} \\
& \bar{H}_{4} = h_{0,0} b_{4}  + 2 h_{1,1} b_{2} + h_{2,2} b_{0}=0 \label{5}
\end{align}
since $b_{\mu} = b_{- \mu}$. Further 
\begin{align}
& H_{0} = h_{0,0} a_{0}  + 2 h_{0,1} a_{3} + h_{0,2} a_{2} =0 \label{6}   \\
& H_{1} = h_{0,1} a_{0}  + 2 h_{1,1} a_{1}  + h_{1,2} a_{2} =0 \label{7}   \\
& H_{2} = h_{0,2} a_{0}  + 2 h_{1,2} a_{1}  + h_{2,2} a_{2} =0 \label{8} 
\end{align}
From~(\ref{2}) and~(\ref{4}) we get $( b_{1}^{2} - b_{3}^{2}) h_{0,1} = ( b_{1}^{2} - b_{3}^{2}) h_{1,2} = 0$. 

We claim that $\theta_{m,\mu}(\tau,0)  \neq \theta_{m,\nu}(\tau,0)$ for $ \mu \neq \nu$ ($0 \leq \mu , \nu \leq m$), $\tau \in \mc{H}$. Suppose not. Then the Wronskian $Wr_{m}$ of $\theta_{m,\mu}$ ($0 \leq \mu \leq m$), would be identically zero on $\mc{H}$ contradictiing the fact that it is a non-zero multiple of Dedekind's $\eta$- function \cite{kramer2}.    

Therefore $ b_{1}^{2}(\tau) \neq  b_{3}^{2} (\tau)$ for all $\tau \in \mc{H}$ which implies $h_{0,1} = h_{1,2}= 0$ (only $ b_{1}^{2} \not \equiv  b_{3}^{2}$ would have sufficed to get this conclusion). Finally~(\ref{7}) and~(\ref{3}) together imply $h_{1,1} = h_{0,2} = 0$. From~(\ref{1}) and~(\ref{5}) we get $( b_{0}^{2} - b_{4}^{2}) h_{0,0} = ( b_{0}^{2} - b_{4}^{2}) h_{2,2} = 0$. By the above, we get $h_{0,0} = h_{2,2} = 0$. Hence $\phi = 0$.

\textit{Step 2.} $\mathrm{Im}\left( \pi_{1} \times \pi_{1+i}\right) \subseteq \ker{ \left( \Lambda(2) - \Lambda(4) \right)}$. We use the Taylor expansions of the Jacobi forms involved. Let $ \phi(\tau,z_{1},z_{2})= \underset{\alpha,\beta \geq 0}\sum \chi_{\alpha,\beta}(\tau) z_{1}^{\alpha} z_{2}^{\beta} \in J_{k,2}(\mc{O}_{K})$ be the Taylor expansion of $\phi$ around $z_{1}=z_{2}=0$. Then the Taylor development of $\pi_{1} \phi $ and $\pi_{1+i} \phi$ are 
\begin{align}
&\pi_{1} \phi = \chi_{0,0} + \chi_{1,1} \, z^{2} + \left( \chi_{0,4} + \chi_{2,2} + \chi_{4,0} \right) z^{4} + \cdots \\
&\pi_{1+i} \phi =  \chi_{0,0} + 2 \chi_{1,1} \, z^{2} - 4 \left( \chi_{0,4} - \chi_{2,2} + \chi_{4,0} \right) z^{4} + \cdots 
\end{align}
from which it easily follows $\Lambda(2) \pi_{1} \phi = \Lambda(4) \pi_{1+i} \phi$.

$\Lambda(2)  - \Lambda(4)$ is clearly surjective since $\Lambda(2)$ is surjective (Recall that $D_{0} + D_{2} +  D_{4} \colon J_{k,2} \rightarrow M_{k} \times S_{k+2} \times S_{k+4}$ is an isomorphism).

$\mathrm{Im}\left( \pi_{1} \times \pi_{1+i}\right) = \ker{ \left( \Lambda(2) - \Lambda(4) \right)}$. We show that they have the same dimension (for  $k \geq 4$). First of all we have,
\begin{align*} 
& \dim{\mathrm{Im}\left( \pi_{1} \times \pi_{1+i}\right)} = \dim{J_{k,2}(\mathcal{O}_{K})} = \frac{k}{2} 
\end{align*}
(for $k >4$, use Hverkamp's dimension formula, see Lemma~\ref{4,2} for $k=4$). Whereas,
\begin{align*}
& \dim{\ker{ \left( \Lambda(2) - \Lambda(4) \right)}} = \dim{J_{k,2}} + \dim{J_{k,4}} - \dim{M_{k}} - \dim{S_{k+2}}
\end{align*}
From part $2$ of \thmref{0mod4} and the fact $\dim{J_{k,1}} = \frac{k}{4}$ (for $k \equiv 0 \pmod{4}$, see \cite[Theorem 1]{sasaki}) or computing directly we get $\dim{J_{k,2}} = \frac{k}{4}$. A direct check now shows 
\[ \dim{J_{k,4}} - \dim{M_{k}} - \dim{S_{k+2}} = \dim{S_{k+4}} + \dim{S_{k+6}} + \dim{S_{k+8}} = \frac{k}{4} \] (Recall that $D_{0} + D_{2} +  D_{4} + D_{6} + D_{8} \colon J_{k,4} \rightarrow M_{k} \times S_{k+2} \times S_{k+4} \times S_{k+6}\times S_{k+8} $ is an isomorphism).

This completes the proof of \thmref{0mod4ind2}.
\end{proof}

\noindent Next, we give the explicit Theta decompositions of two particular basis elements of $J_{4,2}(\mathcal{O}_{K})$. One could perhaps write down the Theta decomposition of $\phi_{4,1} \mid V_{2}$ from the corresponding decomposition of $\phi_{4,1}$, but we use a different trick.

\begin{prop}
$J_{4,2}(\mathcal{O}_{K})$ is spanned by the two linearly independent elements ${\Phi}_{4,2}$ and $\tilde{\Phi}_{4,2}$ given by 
\begin{align} \label{Phi4,2}
\Phi_{4,2}= (x^{6} + y^{6}) (\theta^{H}_{0,0} + \theta^{H}_{2,2}) + z^{6} ( \theta^{H}_{1,1} + \theta^{H}_{3,3} + \theta^{H}_{1,3} + \theta^{H}_{3,1}) + (x^{6} - y^{6}) (\theta^{H}_{0,2} + \theta^{H}_{2,0}) \q \text{and},
\end{align}
{\small\begin{align} \label{tildePhi4,2}
\tilde{\Phi}_{4,2} =  2 x^{3} y^{3} (\theta^{H}_{0,0} - \theta^{H}_{2,2}) +  z^{3}(x^{3} - y^{3}) (\theta^{H}_{0,1} + \theta^{H}_{1,0} + \theta^{H}_{0,3} + \theta^{H}_{3,0} ) + z^{3}(x^{3} + y^{3}) (\theta^{H}_{1,2} + \theta^{H}_{2,1} + \theta^{H}_{2,3} + \theta^{H}_{3,2} ).
\end{align}}

\end{prop}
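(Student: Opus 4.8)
The plan is to leverage the dimension count $\dim_{\mathbb{C}} J_{4,2}(\mathcal{O}_{K}) = 2$ recorded in Lemma~\ref{4,2}. Given this, it suffices to check that $\Phi_{4,2}$ and $\tilde{\Phi}_{4,2}$ both lie in $J_{4,2}(\mathcal{O}_{K})$ and that they are linearly independent; then they automatically span.

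First I would prove membership using the converse to the theta decomposition~(\ref{thetadecomposition}): a series $\sum_{s \in \mathcal{S}_{2}} g_{s}(\tau)\,\theta^{H}_{2;s}(\tau,z_{1},z_{2})$, with the $g_{s}$ holomorphic and bounded at $\infty$, belongs to $J_{k,2}(\mathcal{O}_{K})$ exactly when the vector $(g_{s})_{s}$ transforms under $T$, $S$ and $\epsilon I$ according to~(\ref{2.5}),~(\ref{2.6}),~(\ref{units}) with $k = 4$. The coefficient functions occurring in $\Phi_{4,2}$ and $\tilde{\Phi}_{4,2}$ are homogeneous of degree $6$ in the three Jacobi theta constants $x,y,z$, hence holomorphic modular forms of weight $3 = k-1$ on a suitable $\Gamma(N)$; in particular they are bounded at $\infty$, so only the finitely many identities coming from $T$, $S$ and $\epsilon I$ remain. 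Under $T$ and under $\epsilon I$ the constants $x,y,z$ are merely multiplied by explicit roots of unity, so~(\ref{2.5}) and~(\ref{units})---equivalently the symmetry relations~(\ref{4.3}) and the relations of \S\ref{4.4}---are immediate. The real content is the $S$-transformation: $x,y,z$ mix linearly under $S$ by the classical transformation law for theta constants, and one must verify that the induced linear action on the tuple of degree-$6$ coefficient polynomials is precisely the finite Fourier matrix in~(\ref{2.6}); using the Jacobi relation among $x,y,z$ this collapses to a finite algebraic check.

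Next I would establish linear independence from the fact that the Hermitian theta functions $\{\theta^{H}_{2;s}\}_{s\in\mathcal{S}_{2}}$ are $\mathbb{C}$-linearly independent, which is the orthogonality relation $\int_{P_{\tau}} \theta^{H}_{2;s}\,\overline{\theta^{H}_{2;t}}\, e^{-2\pi N(z_{1}-\bar{z}_{2})/v}\,dz_{1}dz_{2} = \delta_{s,t}(2\mathcal{O}_{K})\,v/2$ used in the proof of Theorem~\ref{2mod4ind2}. Consequently $\Phi_{4,2}$ and $\tilde{\Phi}_{4,2}$ are linearly independent as soon as their coefficient tuples are not proportional, and this is visible from the statement: $\Phi_{4,2}$ carries a nonzero coefficient $z^{6}$ on $\theta^{H}_{1,1}$ whereas $\tilde{\Phi}_{4,2}$ does not involve $\theta^{H}_{1,1}$ at all (equivalently $h_{0,0}=h_{2,2}$ for $\Phi_{4,2}$ but $h_{0,0}=-h_{2,2}$ for $\tilde{\Phi}_{4,2}$). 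Together with $\dim J_{4,2}(\mathcal{O}_{K})=2$ this finishes the proof; as an independent sanity check one may push $\Phi_{4,2}$ and $\tilde{\Phi}_{4,2}$ through $\pi_{1}$ (Lemma~\ref{2to2}) and $\pi_{1+i}$ (Lemma~\ref{2to4}) and identify the images inside the known one-dimensional $J_{4,2}$ and two-dimensional $J_{4,4}$, compatibly with the exact sequence of Theorem~\ref{0mod4ind2}.

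The main obstacle is the membership step, and within it the $S$-transformation bookkeeping: one has to fix once and for all the dictionary between the representatives in $\mathcal{S}_{2}$, the symmetry relations~(\ref{4.3}), and the assignment of degree-$6$ polynomials to the various $\theta^{H}_{a,b}$, and then confirm that the classical $S$-transformation of $x,y,z$ recombines these polynomials exactly as prescribed by~(\ref{2.6}). Everything else is either a dimension count or the linear independence of the $\theta^{H}_{2;s}$, both already available.
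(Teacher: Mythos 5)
Your overall strategy coincides with the paper's: both rest on $\dim J_{4,2}(\mathcal{O}_{K})=2$ (Lemma~\ref{4,2}) plus exhibiting two linearly independent members, and your linear-independence argument (non-proportional theta-component tuples together with the orthogonality of the $\theta^{H}_{2;s}$) is exactly what the paper means by ``by construction it is linearly independent.'' Where you genuinely diverge is the membership step. The paper does not verify the $S$-transformation for $\Phi_{4,2}$ at all: it \emph{defines} $\Phi_{4,2}:=U_{1+i}\Phi_{4,1}$, where $\Phi_{4,1}$ is Sasaki's basis element of $J_{4,1}(\mathcal{O}_{K})$, so membership is automatic and Lemma~\ref{Utheta} merely translates the known theta decomposition of $\Phi_{4,1}$ into the displayed formula. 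For $\tilde{\Phi}_{4,2}$ the paper again does not verify the full $16$-component system: it first observes that imposing $\tilde{h}_{1,1}=\tilde{h}_{0,2}=0$ (complementary to the vanishing pattern of $\Phi_{4,2}$) collapses~(\ref{62})--(\ref{67}) to the three-term system~(\ref{68})--(\ref{70}), then \emph{solves} for $\tilde{h}_{0,0},\tilde{h}_{0,1},\tilde{h}_{1,2}$ from the linear condition $\pi_{1}\tilde{\Phi}_{4,2}=\pi_{1}\Phi_{4,2}$ using Lemma~\ref{2to2}, and only at the end checks that the resulting $2x^{3}y^{3}$, $z^{3}(x^{3}\pm y^{3})$ satisfy~(\ref{68})--(\ref{70}). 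Your route --- a direct frontal check of~(\ref{2.5}),~(\ref{2.6}),~(\ref{units}) for both forms --- is workable in principle, but it is also precisely the part you leave as ``a finite algebraic check,'' and for $\Phi_{4,2}$ it is strictly more work than the paper's construction, which gets that half for free. So the one substantive deficit in your write-up is that the decisive computation (matching the $S$-action on the degree-$6$ polynomials in $x,y,z$ against the finite Fourier matrix of~(\ref{2.6})) is announced but not performed; the paper's device of producing the forms via $U_{1+i}$ and via the constraint $\tilde{\Phi}_{4,2}-\Phi_{4,2}\in\ker\pi_{1}$ is what reduces that check to the three identities~(\ref{68})--(\ref{70}), and you would be well advised to adopt at least the first of these shortcuts before attempting the verification by hand.
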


\begin{proof}
Let $\Phi_{4,1}$ be a basis of $J_{4,1}(\mathcal{O}_{K})$ explicitly given in \cite{sasaki}:
\begin{align}
\Phi_{4,1} = (x^{6} + y^{6}) \theta_{1,0}^{H} + z^{6} ( \theta_{1,\frac{1}{2}}^{H} +\theta_{1,\frac{i}{2}}^{H}) + (x^{6} - y^{6}) \theta_{1,\frac{1+i}{2}}^{H}
\end{align} 
where $x = \underset{n \in \mbb{Z}}\sum e\left( \frac{n^{2} \tau}{2} \right), \q y = \underset{n \in \mbb{Z}}\sum (-1)^{n} e\left ( \frac{n^{2} \tau}{2} \right), \q z = \underset{t \in \frac{1}{2} + \mbb{Z}}\sum e\left( \frac{t^{2} \tau}{2} \right)$ are the so called ``Theta constants''. 

Let $\Phi_{4,2} := U_{1+i} \Phi_{4,1}$. We will produce another element of $J_{4,2}(\mathcal{O}_{K})$ linearly independent of $\Phi_{4,2}$. To this end, we compute the Theta decomposition of $\Phi_{4,2}$ using Lemma~\ref{Utheta} and the fact that $h_{\frac{1}{2}} = h_{\frac{i}{2}}$ ($h_{s}$ being Theta components of an element in $J_{k,1}(\mathcal{O}_{K})$, $k \equiv 0 \pmod{4}$) :
\begin{align} \label{Phi4,2}
\Phi_{4,2}= (x^{6} + y^{6}) (\theta^{H}_{0,0} + \theta^{H}_{2,2}) + z^{6} ( \theta^{H}_{1,1} + \theta^{H}_{3,3} + \theta^{H}_{1,3} + \theta^{H}_{3,1}) + (x^{6} - y^{6}) (\theta^{H}_{0,2} + \theta^{H}_{2,0}).
\end{align}

Here we consider the restriction $\pi_{1}$. Since $\dim{J_{4,2}} = 1$, and $\pi_{1}$ is non-zero, we also have $\pi_{1}$ is surjective. Since $\Phi_{4,2} \not \in \ker{\pi_{1}}$, $\dim{\ker{\pi_{1}}} = 1$. We will determine $\tilde{\Phi}_{4,2} \neq 0$ by the condition $\tilde{\Phi}_{4,2} - \Phi_{4,2} \in \ker{\pi_{1}}$, which will prove the Proposition.

Let $\phi \in J_{4,2}(\mathcal{O}_{K})$. The transformation formulas for it's Theta components under $S$ are as follows:
\begin{align}
&h_{0,0} \mid_{3} S = \frac{i}{4} ( h_{0,0} + h_{2,2} + 2 h_{0,2} + 4 h_{1,1} +  4 h_{0,1} + 4 h_{1,2} ) \label{62}\\
&h_{2,2} \mid_{3} S = \frac{i}{4} ( h_{0,0} + h_{2,2} + 2 h_{0,2} + 4 h_{1,1} -  4 h_{0,1} - 4 h_{1,2} ) \label{63}\\
&h_{0,1} \mid_{3} S = \frac{i}{4} ( h_{0,0} - h_{2,2} + 2 h_{0,1} - 2 h_{1,2} )\label{64} \\
&h_{1,2} \mid_{3} S = \frac{i}{4} ( h_{0,0} - h_{2,2} - 2 h_{0,1} + 2 h_{1,2} ) \label{65}\\
&h_{0,2} \mid_{3} S = \frac{i}{4} ( h_{0,0} + h_{2,2} + 2 h_{0,2} - 4 h_{1,1} ) \label{66}\\
&h_{1,1} \mid_{3} S = \frac{i}{4} ( h_{0,0} + h_{2,2} - 2 h_{0,2}  ) \label{67}
\end{align}  

Let us denote the Theta components of $\Phi_{4,2}$ by $\hat{h}_{s}$ ($s \in \mc{S}_{2}$). From~(\ref{Phi4,2}) we see that 
$\hat{h}_{0,1} = \hat{h}_{1,2} = 0$ which implies by equations~(\ref{62}),...,~(\ref{67}) that $\hat{h}_{0,0} = \hat{h}_{2,2} $ and the following transformation formulas under $S$:
\begin{align}
&\hat{h}_{0,0} \mid_{3} S = \frac{i}{2} (  \hat{h}_{0,0} + \hat{h}_{0,2} +  2 \hat{h}_{1,1} ) \label{71}\\
&\hat{h}_{0,2} \mid_{3} S = \frac{i}{2} (  \hat{h}_{0,0} + \hat{h}_{0,2} -  2 \hat{h}_{1,1} ) \label{72}\\
&\hat{h}_{1,1} \mid_{3} S = \frac{i}{2} (  \hat{h}_{0,0} - \hat{h}_{0,2} ) \label{73} 
\end{align}

From the above formulas~(\ref{62}),...,~(\ref{67}) we note that if we assume $\tilde{h}_{1,1} = \tilde{h}_{0,2} = 0$ in a Hermitian Jacobi form $\tilde{\phi}$ of weight $4$ and index $2$, (conditions complementary to that in $\Phi_{4,2}$), we get $\tilde{h}_{0,0} + \tilde{h}_{2,2} = 0$ and a ``honest'' vector-valued modular form of weight $3$:
\begin{align}
&\tilde{h}_{0,0} \mid_{3} S = \frac{i}{2} (  \tilde{h}_{0,1} +  \tilde{h}_{1,2} ) \label{68}\\
&\tilde{h}_{0,1} \mid_{3} S = \frac{i}{2} ( \tilde{h}_{0,0}  +  \tilde{h}_{0,1} -  \tilde{h}_{1,2} ) \label{69}\\
&\tilde{h}_{1,2} \mid_{3} S = \frac{i}{2} ( \tilde{h}_{0,0} -  \tilde{h}_{0,1} +  \tilde{h}_{1,2} ),\label{70}
\end{align}
the transformation formulas under $T$ remaining the same. Therefore Theorem~\ref{theta} will give a Hermitian Jacobi form of weight $4$ index $2$ with the above Theta components, which we denote by $\tilde{\Phi}_{4,2}$. If $\tilde{\Phi}_{4,2}$ exists and is non-zero, we are done, since by construction it is linearly independent of $\Phi_{4,2}$.

We will determine $\tilde{\Phi}_{4,2}$ by imposing the condition that $\tilde{\Phi}_{4,2} - \Phi_{4,2} \in \ker{\pi_{1}}$, i.e., $\pi_{1} \tilde{\Phi}_{4,2} = \pi_{1} \Phi_{4,2}$. Upon using Lemma~\ref{2to2} and equation~(\ref{Phi4,2}), the Theta components $\tilde{h}_{s}$ of $\tilde{\Phi}_{4,2}$ satisfy the following system of equations:
\begin{align}
&\tilde{h}_{0,0} a_{0} + 2 \tilde{h}_{0,1} a_{1} = (x^{6}+y^{6}) a_{0} + (x^{6}-y^{6}) a_{2} \\
&\tilde{h}_{0,1} a_{0} + \tilde{h}_{1,2} a_{2} = 2 z^{6} a_{1} \\
-&\tilde{h}_{0,0} a_{2} + 2 \tilde{h}_{1,2} a_{1} = (x^{6}-y^{6}) a_{0} + (x^{6}+y^{6}) a_{2}
\end{align}

Using the relations $x = a_{0} + a_{2}$,  $y = a_{0} - a_{2}$, $z = 2 a_{1}$, we find after a calculation,
\begin{align} \label{newthetacomp}
\tilde{h}_{0,0} = 2 x^{3} y^{3}, \q \tilde{h}_{0,1} = z^{3}(x^{3} - y^{3}), \q \tilde{h}_{1,2} = z^{3}(x^{3} + y^{3}).
\end{align}

It is now easy to see (using the formulas in \cite[p.59]{zagier}) that $\tilde{h}_{0,0}, \tilde{h}_{0,1}, \tilde{h}_{1,2}$ given by equation~(\ref{newthetacomp}) satisfy the right transformation formulas~(\ref{68}), (\ref{69}), (\ref{70}) under $S$. Whence, $\tilde{\Phi}_{4,2}$ is non-zero and has the Theta decomposition:
{\small\begin{align} \label{tildePhi4,2}
\tilde{\Phi}_{4,2} =  2 x^{3} y^{3} (\theta^{H}_{0,0} - \theta^{H}_{2,2}) +  z^{3}(x^{3} - y^{3}) (\theta^{H}_{0,1} + \theta^{H}_{1,0} + \theta^{H}_{0,3} + \theta^{H}_{3,0} ) + z^{3}(x^{3} + y^{3}) (\theta^{H}_{1,2} + \theta^{H}_{2,1} + \theta^{H}_{2,3} + \theta^{H}_{3,2} )
\end{align}}

As noted earlier, this completes the proof of the Proposition.
\end{proof}

\subsection{Order of vanishing at origin} \label{originorder} 
For $\phi \in J_{k,m}(\mathcal{O}_{K})$ let $\phi(\tau,z_{1},z_{2}) = \underset{\alpha,\beta \geq 0}\sum \chi_{\alpha,\beta}(\tau) z_{1}^{\alpha}z_{2}^{\beta}$ be the Taylor expansion around $z_{1} = z_{2} = 0$. Define a non-negative integer $\varrho_{k,m} \phi$ by
\begin{align}
\varrho_{k,m} \phi &= \min{ \left\{ \alpha+\beta \mid \chi_{\alpha,\beta}(\tau) \not \equiv 0  \right\} } &\mbox{if} \, \phi \not \equiv 0  \\
& = \infty &\mbox{otherwise}
\end{align}
i.e., $\varrho_{k,m} \phi$ can be interpeted as the order of vanishing of $\phi$ at the origin. From the relation with Jacobi forms, we can give upper bounds on $\varrho_{k,m} \phi$ for any $\phi \in J_{k,m}(\mathcal{O}_{K})$ ($m= 1,2$).

\begin{prop} \label{orderofvanish}
(i) Let $\phi \in J_{k,1}(\mathcal{O}_{K})$ be non zero. Then 
\begin{align}
0 \leq \varrho_{k,1} \leq 2 \, \mbox{ if } k \equiv 2 \pmod{4} \, ; \q  0 \leq \varrho_{k,1} \leq 4 \, \mbox{ if } k \equiv 0 \pmod{4}
\end{align}
(ii) Let $\phi \in J_{k,2}(\mathcal{O}_{K})$. Then 
\begin{align}
0 \leq \varrho_{k,2} \leq 5 \, \mbox{ if } k \equiv 1,3 \pmod{4} \, ; \q 0 \leq \varrho_{k,2} \leq 8 \, \mbox{ if } k \equiv 0,2 \pmod{4}
\end{align}
\end{prop}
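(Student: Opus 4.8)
The plan is to derive the bounds on $\varrho_{k,m}$ purely from the structural isomorphisms and exact sequences already established, by examining which low-order Taylor coefficients can simultaneously vanish. The key observation is that in every case treated, a Hermitian Jacobi form is determined by (essentially) its restriction to classical Jacobi forms together with a finite list of Taylor coefficients $\chi_{\alpha,\beta}$, and the classical Jacobi form in turn has bounded order of vanishing at the origin ($2m'$ for index $m'$). So if $\phi\in J_{k,m}(\mathcal{O}_K)$ vanished to too high an order, then both its restriction and the extra coefficients would vanish, forcing $\phi=0$.

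First I would handle (i), index $1$. For $k\equiv 2\pmod 4$, by \thmref{2mod4} the map $\pi_1\colon J_{k,1}(\mathcal{O}_K)\hookrightarrow J_{k,1}$ is injective; a classical index-$1$ Jacobi form vanishes at the origin to order at most $2$ (this is the order-of-vanishing fact for $J_{k,1}$), so if $\varrho_{k,1}\phi\geq 3$ then $\pi_1\phi=\phi(\tau,z,z)$ vanishes to order $\geq 3$, hence $\pi_1\phi=0$, hence $\phi=0$. Thus $\varrho_{k,1}\phi\leq 2$. For $k\equiv 0\pmod 4$, I would instead use the isomorphism $\xi$ of \cite{sasaki} (equation~(\ref{sasakikmod4})): $\phi$ is determined by $\chi_{0,0}$, $\xi_{1,1}$, $\xi_{2,2}$, and $\chi_{4,0}$ (equivalently $\chi_{0,4}$). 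If $\varrho_{k,1}\phi\geq 5$, then $\chi_{0,0}=\chi_{1,1}=\chi_{2,2}=\chi_{4,0}=0$ (recall $\chi_{\alpha,\beta}=0$ unless $\alpha-\beta\equiv k\equiv 0\pmod 4$, so $\chi_{3,0},\chi_{0,3}$, etc. vanish automatically and the first possibly-nonzero coefficients after $\chi_{0,0},\chi_{1,1},\chi_{2,2}$ in low total degree are $\chi_{4,0},\chi_{0,4},\chi_{2,2}$); from the formulas~(\ref{Dhermitian}) it then follows $\xi_{1,1}=\chi_{1,1}-\frac{2\pi i}{k}\chi_{0,0}'=0$ and $\xi_{2,2}=0$, so $\xi\phi=0$ and $\phi=0$. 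Hence $\varrho_{k,1}\phi\leq 4$.

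Next, (ii), index $2$. For $k\equiv 1,3\pmod 4$, Propositions~\ref{3mod4ind2} and~\ref{1mod4ind2} give that $\pi_{1+i}\colon J_{k,2}(\mathcal{O}_K)\to J_{k,4}$ is an isomorphism. A classical index-$4$ Jacobi form vanishes at the origin to order at most $8$; since $\pi_{1+i}\phi(\tau,z)=\phi(\tau,(1+i)z,(1-i)z)$ has, as a power series in $z$, coefficients that are linear combinations of the $\chi_{\alpha,\beta}$ with $\alpha+\beta$ equal to the $z$-degree, a vanishing $\varrho_{k,2}\phi\geq 9$ would force $\pi_{1+i}\phi$ to vanish to order $\geq 9$, hence $\pi_{1+i}\phi=0$, hence $\phi=0$. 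Thus $\varrho_{k,2}\phi\leq 8$ in these cases — in fact one can do slightly better by noting which residues $\alpha-\beta\pmod 4$ are allowed, but $8$ is already $2m$; to get the sharper bound $5$ I would argue that $\chi_{\alpha,\beta}=0$ unless $\alpha-\beta\equiv k\pmod 4$, so for $k$ odd the total degrees $\alpha+\beta$ realizing nonzero coefficients are odd, the candidate low degrees being $1,3,5$, and if all of $\chi_{1,0},\chi_{0,1},\chi_{3,0},\chi_{2,1},\ldots,\chi_{5,0},\ldots$ through degree $5$ vanish, then the corresponding homogeneous pieces of $\pi_{1+i}\phi$ in degrees $1,3,5$ vanish, and combined with the Taylor-coefficient relations and injectivity this forces $\phi=0$; the content here is a small linear-algebra check that degrees $\leq 5$ already span enough. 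For $k\equiv 2\pmod 4$, \thmref{2mod4ind2} shows $\phi$ is built from $h_{1,1}\in M_{k-1}(SL(2,\mathbb{Z}),\bar\omega)$, the restriction $\pi_1\phi\in J_{k,2}$, and $\chi_{6,0}=D_0(6)\phi$; if $\varrho_{k,2}\phi\geq 9$, then $\pi_1\phi=0$ (order of vanishing of $J_{k,2}$ is $\leq 4$), so $\phi\in\ker\pi_1$, and $\chi_{6,0}=0$, so $\phi\in\ker\pi_1^{\circ}$ with $D_0(6)\phi=0$, hence $\phi=0$ by the isomorphism $D_0(6)\colon\ker\pi_1^{\circ}\xrightarrow{\sim}S_{k+6}$; but also $h_{1,1}=0$ since the term $h_{1,1}(\theta^H_{1,1}-\theta^H_{1,3}-\theta^H_{3,1}+\theta^H_{3,3})$ has nonzero $z_1^6$-type contribution — wait, rather: since $\varrho\geq 9>6$ forces $\chi_{6,0}=0$ and $\pi_1=0$, \textit{Step 1} of that theorem's proof gives $\phi=h_{1,1}(\cdots)$ with $h_{1,1}=0$ too (as $\phi-h_{1,1}(\cdots)\in\ker\pi_1^{\circ}$ is then $0$ and separately $D_0(6)$ of the $h_{1,1}$-part is $15c'\psi\eta^{15}$ with $\psi=0$), so $\phi=0$; thus $\varrho_{k,2}\phi\leq 8$. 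Finally for $k\equiv 0\pmod 4$, \thmref{0mod4ind2} gives the injection $\pi_1\times\pi_{1+i}\colon J_{k,2}(\mathcal{O}_K)\hookrightarrow J_{k,2}\times J_{k,4}$, and $\varrho_{k,2}\phi\geq 9$ forces both $\pi_1\phi=0$ and $\pi_{1+i}\phi=0$, hence $\phi=0$.

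The main obstacle is not the high-order case — that is the easy injectivity argument above — but rather sharpening the crude bound $2m=8$ down to the stated $5$ for $k$ odd (and confirming $8$, not something larger, is correct for $k$ even while checking no parity subtlety lets a form slip through). This requires a careful bookkeeping of which $\chi_{\alpha,\beta}$ are forced to vanish by the congruence $\alpha-\beta\equiv k\pmod 4$ together with the symmetry/antisymmetry relations among Theta components, and then verifying that the surviving low-degree coefficients inject into the relevant piece of $J_{k,2}$ or $J_{k,4}$; equivalently, one must identify the minimal degree in which the restriction maps are guaranteed to "see" a nonzero Taylor coefficient. I expect this to reduce, in each residue class, to inspecting a $2\times 2$ or $3\times 3$ system built from the Theta-constant identities $x=a_0+a_2$, $y=a_0-a_2$, $z=2a_1$ used already in the proofs above, so it is routine but must be done case by case.
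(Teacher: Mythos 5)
Your overall strategy --- inject $J_{k,m}(\mathcal{O}_K)$ into classical Jacobi forms via restriction maps and then quote the classical bound on the order of vanishing at the origin --- is exactly the paper's strategy, and your treatment of index $1$ (both congruence classes) and of index $2$, $k\equiv 0\pmod 4$ is fine. But there are two genuine gaps. First, for $k\equiv 1,3\pmod 4$, $m=2$, you only prove $\varrho_{k,2}\leq 8$ and explicitly defer the claimed bound $5$ to an unfinished ``routine but case-by-case'' linear-algebra check. The bound $5$ in fact comes for free from the classical input you are already using: for \emph{odd} weight, a form in $J_{k,m'}$ is an odd function of $z$ and is determined by its development coefficients $D_1,D_3,\dots,D_{2m'-3}$, so its order of vanishing at the origin is at most $2m'-3$ (this is the statement on p.~37 of Eichler--Zagier that the paper cites); for $m'=4$ this gives $5$, and combined with the injectivity of $\pi_{1+i}\colon J_{k,2}(\mathcal{O}_K)\to J_{k,4}$ from Propositions~\ref{3mod4ind2} and~\ref{1mod4ind2} you are done. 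Your proposed substitute (bookkeeping of which $\chi_{\alpha,\beta}$ survive the congruence $\alpha-\beta\equiv k\pmod 4$) is not carried out and is not needed.

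Second, your argument for $k\equiv 2\pmod 4$, $m=2$ does not close. From $\varrho_{k,2}\phi\geq 9$ you get $\pi_1\phi=0$ and $\chi_{6,0}=0$, but you then assert $\phi\in\ker\pi_1^{\circ}$, which requires $h_{1,1}=0$ --- and $\chi_{6,0}=0$ alone does not give this, since $\chi_{6,0}(\phi)$ is the \emph{sum} of $D_0(6)$ applied to the $h_{1,1}$-part and to the $\ker\pi_1^{\circ}$-part, and the former need not vanish; your attempted repair (``$\phi-h_{1,1}(\cdots)\in\ker\pi_1^{\circ}$ is then $0$'') presupposes what is to be shown. The paper sidesteps the exact sequence of Theorem~\ref{2mod4ind2} entirely here: it proves, inside the proof of this very proposition, that $\pi_1\times\pi_{1+i}\colon J_{k,2}(\mathcal{O}_K)\to J_{k,2}\times J_{k,4}$ is injective also for $k\equiv 2\pmod 4$, by noting that $\pi_{1+i}\phi=0$ forces $h_{1,1}=0$ and that $\pi_1\phi=0$ then gives a $3\times 3$ system in $(h_{0,1},h_{0,2},h_{1,2})$ with determinant $-4a_0a_1a_2\neq 0$. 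With that injectivity in hand, $\varrho_{k,2}\phi\geq 9$ kills both restrictions (classical bounds $4$ and $8$) and hence $\phi$. You should either prove this injectivity or find some other way to detect the $h_{1,1}$-component from Taylor coefficients of order $\leq 8$; as written the case is open.
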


\begin{proof}
All of these assertions except the case $k \equiv 2 \pmod{4}, \, m = 2$ follow easily from Propositions~\ref{1mod4ind2},
 \ref{3mod4ind2} and Theorem~\ref{0mod4ind2} and the corresponding result for Jacobi forms (see \cite[p.37]{zagier}). In the case $k \equiv 2 \pmod{4}, \, m = 2$ we have $J_{k,2}(\mathcal{O}_{K})  \xrightarrow{ \pi_{1} \times \pi_{1+i} } J_{k,2} \times J_{k,4}$ (as in the case $k \equiv 0 \pmod{4}$). This follows from Lemmas~\ref{2to2} and~\ref{2to4} and equations~(\ref{2mod41}) and~(\ref{2mod42}). For convenience, we give the proof. Let $ \phi \in J_{k,2}(\mathcal{O}_{K})$, with Theta components $h_{a,b}$ ($a,b \in \mc{S}_{2}$). From $\pi_{1+i} \phi = 0$, we get $h_{1,1} = 0$ and from $\pi_{1} \phi = 0$ that (using $h_{0,0} = h_{2,2} = 0$)
\[  \begin{pmatrix}
2a_{1} & a_{2} & 0\\
a_{0} & 0 & a_{2} \\
0& a_{0} & 2a_{1} \end{pmatrix} 
\begin{pmatrix} 
h_{0,1}\\
h_{0,2}\\
h_{1,2} \end{pmatrix} = 0 \]
Since $\det{ \left( \begin{smallmatrix}
2a_{1} & a_{2} & 0\\
a_{0} & 0 & a_{2} \\
0& a_{0} & 2a_{1} \end{smallmatrix} \right) }  = - 4 a_{0} a_{1} a_{2}$, we get the required injectivity and hence the Proposition.
\end{proof}

\section{Rank of $J_{n*,m}(\mathcal{O}_{K})$ over $M_{*}$ and algebraic independence of $\phi_{4,1}$, $\phi_{8,1}$, $\phi_{12,1}$} \label{rank}

We refer to \cite{sasaki} for the definition of the index $1$ forms $\phi_{4,1}$, $\phi_{8,1}$, $\phi_{12,1}$ which are a basis for $J_{4*,1}(\mathcal{O}_{K}) := \underset{k \geq 0} \oplus J_{4k,1}(\mathcal{O}_{K})$ as a module over $M_{*}$.

\begin{rmk} \label{moduledef}

For $ m \geq 1$, $J_{n*,m}(\mathcal{O}_{K})$ ($n= 2,4$) are modules over $M_{*}$ via the algebra isomorphism
\begin{equation}
M_{*} = \mbb{C}[E_{4}, E_{6}] \xrightarrow{E_{4} \mapsto E_{4}, E_{6} \mapsto E_{6}^{2}} \mbb{C}[E_{4}, E_{6}^{2}] 
\end{equation}

because $E_{6} \cdot J_{n*,m}(\mathcal{O}_{K}) \not \in J_{n*,m}(\mathcal{O}_{K})$. From the argument in \cite[p.97]{zagier}, we easily see that $J_{*,*}(\mathcal{O}_{K})$ is free over $M_{*}$, and $J_{n*,m}(\mathcal{O}_{K})$ is of finite rank $r_{n}(m)$ over $M_{*}$.

\end{rmk}

\begin{prop}

(i) $r_{4}(m) = m^{2} + 2$, \q (ii) $r_{2}(m) = 2(m^{2} +1)$.
\end{prop}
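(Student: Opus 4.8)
The plan is to run the Eichler--Zagier ``theta decomposition'' strategy while book-keeping the symmetry imposed by the units $\mathcal{O}_K^{\times}$, and then to read off the two ranks by a Burnside count. For $\kappa\in\mathbb{Z}/4\mathbb{Z}$ put $A_{\kappa}:=\bigoplus_{k\equiv\kappa\,(4)}J_{k,m}(\mathcal{O}_K)$. Because $\mathbb{C}[E_4,E_6^{2}]$ lives in weights $\equiv0\pmod4$, the $M_*$-action of Remark~\ref{moduledef} preserves each $A_{\kappa}$, so $J_{4*,m}(\mathcal{O}_K)=A_0$ and $J_{2*,m}(\mathcal{O}_K)=A_0\oplus A_2$ as $M_*$-modules; hence it suffices to show $\operatorname{rank}_{M_*}A_0=m^{2}+2$ and $\operatorname{rank}_{M_*}A_2=m^{2}$, whence $r_4(m)=m^{2}+2$ and $r_2(m)=(m^{2}+2)+m^{2}=2(m^{2}+1)$.

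First I would use the theta decomposition~(\ref{thetadecomposition}) to embed, for $k$ even, $J_{k,m}(\mathcal{O}_K)$ into the weight-$(k-1)$ vector-valued modular forms with values in $\mathbb{C}[\mathcal{O}_K^{\sharp}/m\mathcal{O}_K]$ for the Weil-type representation $\rho$ fixed by~(\ref{2.5}),~(\ref{2.6}); by Haverkamp's reconstruction theorem (Theorem~\ref{theta}) the image is precisely those forms whose component vector lies in
\[ V_{\kappa}:=\bigl\{(h_s)_s\ :\ h_{\epsilon s}=\epsilon^{-\kappa}h_s\ \text{ for all }\epsilon\in\mathcal{O}_K^{\times}\bigr\}\qquad(\kappa=k\bmod4), \]
the $\chi_{\kappa}$-isotypic part of $\mathbb{C}[\mathcal{O}_K^{\sharp}/m\mathcal{O}_K]$ for the $\mathcal{O}_K^{\times}$-action, where $\chi_{\kappa}(\epsilon)=\epsilon^{-\kappa}$; here~(\ref{units}) is used, together with the fact that $V_{\kappa}$ is $SL_2(\mathbb{Z})$-stable (the scalar group $\mathcal{O}_K^{\times}$ being central in $\Gamma_1(\mathcal{O}_K)$). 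Thus $A_{\kappa}$ is, up to the weight shift $k\mapsto k-1$, the part of the graded space $M_*(\rho|_{V_{\kappa}})$ of vector-valued modular forms for $\rho|_{V_{\kappa}}$ in weights $\equiv\kappa-1\,(4)$. Granting freeness (Remark~\ref{moduledef}) and the standard fact that $M_*(\sigma)$ is a free $\mathbb{C}[E_4,E_6]$-module of rank $\dim\sigma$ for any finite-image representation $\sigma$, one concludes: $M_*(\rho|_{V_{\kappa}})$ is free of rank $2\dim V_{\kappa}$ over $\mathbb{C}[E_4,E_6^{2}]$, it splits as the sum of its weight-$\equiv(\kappa-1)$ and weight-$\equiv(\kappa+1)$ parts (each a graded direct summand, hence free), and multiplication by $E_6$ identifies these two parts up to a weight shift; therefore each has rank $\dim V_{\kappa}$, i.e.\ $\operatorname{rank}_{M_*}A_{\kappa}=\dim V_{\kappa}$. (Alternatively one may skip vector-valued forms and insert Haverkamp's dimension formula for $J^{cusp}_{k,m}(\mathcal{O}_K)$, together with $\dim J^{Eis}_{k,m}(\mathcal{O}_K)=O(1)$, into the identity $\operatorname{rank}_{M_*}A_{\kappa}=\lim_{t\to1}(1-t^{4})(1-t^{12})\sum_{k\equiv\kappa\,(4)}\dim J_{k,m}(\mathcal{O}_K)\,t^k$; the content is then that the leading term in $k$ of $\dim J_{k,m}(\mathcal{O}_K)$ for $k\equiv\kappa\,(4)$ equals $\tfrac{\dim V_{\kappa}}{12}k$.)

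It then remains to compute $\dim V_0$ and $\dim V_2$, a purely combinatorial step. Under the identification $\mathcal{O}_K^{\sharp}/m\mathcal{O}_K\cong(\mathbb{Z}/2m\mathbb{Z})^{2}$, $\tfrac{a+bi}{2}\leftrightarrow(a,b)$, the generator $i$ of $\mathcal{O}_K^{\times}$ acts by the order-$4$ rotation $R\colon(a,b)\mapsto(-b,a)$, so $\#\!\operatorname{Fix}(\mathrm{id})=4m^{2}$, $\#\!\operatorname{Fix}(R)=\#\!\operatorname{Fix}(R^{3})=2$ (the points $(0,0),(m,m)$) and $\#\!\operatorname{Fix}(R^{2})=4$ (the points of $\{0,m\}^{2}$). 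Burnside's formula for the $\chi_{\kappa}$-isotypic component gives
\[ \dim V_0=\tfrac14\bigl(4m^{2}+2+4+2\bigr)=m^{2}+2,\qquad \dim V_2=\tfrac14\bigl(4m^{2}-2+4-2\bigr)=m^{2}, \]
the signs in the second computation reflecting that $\chi_2$ is the quadratic character ($\chi_2(i)=i^{-2}=-1$). Combined with the previous paragraph this yields $r_4(m)=m^{2}+2$ and $r_2(m)=2(m^{2}+1)$; as a check, for $m=1$ this gives $\operatorname{rank}_{M_*}A_0=3$, matching Sasaki's basis $\phi_{4,1},\phi_{8,1},\phi_{12,1}$.

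I expect the main obstacle to be the second paragraph. Producing the \emph{embedding} $A_{\kappa}\hookrightarrow M_*(\rho|_{V_{\kappa}})$ is easy and only bounds $\operatorname{rank}_{M_*}A_{\kappa}$ above by $\dim V_{\kappa}$; the real work is showing the embedding is \emph{onto} (which is exactly Haverkamp's construction of a Hermitian Jacobi form from a compatible family $(h_s)$, and requires care with the holomorphy conditions at the cusp) and that $M_*(\rho|_{V_{\kappa}})$ has rank precisely $\dim V_{\kappa}$ over $M_*$ in the restricted-weight sense above --- or, on the alternative route, a careful extraction of the leading term of Haverkamp's trace formula. Everything else is bookkeeping and a Burnside computation.
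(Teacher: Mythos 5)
Your argument is correct and reaches the right numbers, but it takes a genuinely different (and heavier) route than the paper. The paper's proof is essentially your parenthetical ``alternative route'': it plugs Haverkamp's trace/dimension formula for $J^{cusp}_{k,2}(\mathcal{O}_K)$ into $\dim J_{k,m}(\mathcal{O}_K)=(m^2+2)\dim M_k+O(1)$ for $k\equiv0\pmod4$ (resp.\ $m^2\dim M_k+O(1)$ for $k\equiv2\pmod4$), lets $k\to\infty$ to read off the rank of each weight class, and adds the two classes for $r_2(m)$ exactly as you do --- a three-line proof that treats the constants $m^2+2$ and $m^2$ as outputs of the trace formula. What your route buys is an explanation of \emph{why} those constants appear: they are the dimensions of the $\chi_0$- and $\chi_2$-isotypic pieces of $\mathbb{C}[\mathcal{O}_K^{\sharp}/m\mathcal{O}_K]$ under the rotation $s\mapsto is$, and your Burnside count (fixed-point counts $4m^2,2,4,2$) is correct, as is the identification of which character goes with which weight class via~(\ref{units}). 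The cost is that you must import two results not used by the paper: the free-module theorem for vector-valued modular forms (rank of $M_*(\sigma)$ over $\mathbb{C}[E_4,E_6]$ equals $\dim\sigma$), and the converse direction of the theta decomposition (Haverkamp's reconstruction of a Hermitian Jacobi form from a compatible tuple $(h_s)$), both of which you rightly flag as the load-bearing steps; your two-way multiplication-by-$E_6$ argument for halving the rank between the two weight residues is fine, granted that graded direct summands of free graded modules over $\mathbb{C}[E_4,E_6^2]$ are free. If you only want the proposition as stated, the dimension-formula route is the efficient one; your structural route is the one that would generalize to predicting generator weights (cf.\ the discussion in Section~\ref{qs}).
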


\begin{proof}

The proof is immediate from the dimension formula of $J_{k,m}(\mathcal{O}_{K})$ in \cite[Theorem 3]{haverkamp/en}. We find that $\dim { J_{k,m}(\mathcal{O}_{K}) } = (m^{2} + 2) \dim{M_{k}} + f(m) + O(1)$, (resp. $= m^{2} \dim{M_{k}} + g(m) + O(1)$) where $f(m), g(m)$ are functions depending only on $m$ when $k \equiv 0 \pmod{4}$ (resp. $k \equiv 2 \pmod{4}$). Letting $k \rightarrow \infty$, we get $(i)$. Since there can be no linear relation between generators of weights $0 \pmod{4}$ and  
$2 \pmod{4}$ by Remark~\ref{moduledef}, $(ii)$ follows. 
\end{proof}

\begin{prop} \label{halfgen}

$\phi_{4,1}$, $\phi_{8,1}$, $\phi_{12,1}$ are algebraically independent over $M_{*}$.

\end{prop}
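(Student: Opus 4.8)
The plan is to reduce the algebraic independence to linear independence of products at each fixed index, and then to push the question, through the restriction maps of \S\ref{index2}, down to classical Jacobi forms, where it becomes a finite linear-algebra computation over $M_{*}$.

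First I would observe that it suffices to treat bihomogeneous relations: a relation $\sum f_{abc}\,\phi_{4,1}^{a}\phi_{8,1}^{b}\phi_{12,1}^{c}=0$ with $f_{abc}\in M_{*}$ breaks up, after grading by the index $m=a+b+c$ and by the total weight, into relations in which every monomial has the same index and weight. Hence the algebraic independence of $\phi_{4,1},\phi_{8,1},\phi_{12,1}$ over $M_{*}$ is equivalent to the statement that for every $m\ge 0$ the $\binom{m+2}{2}$ monomials $\phi_{4,1}^{a}\phi_{8,1}^{b}\phi_{12,1}^{c}$ with $a+b+c=m$ are linearly independent over $M_{*}$ inside the free $M_{*}$-module $J_{4*,m}(\mathcal{O}_{K})$. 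The cases $m=0,1$ are immediate, the latter because those three forms are an $M_{*}$-basis of $J_{4*,1}(\mathcal{O}_{K})$ by \cite{sasaki}; since $\binom{4}{2}=6=r_{4}(2)$, for $m=2$ the claim is exactly that the six products attain the full rank $6$ of $J_{4*,2}(\mathcal{O}_{K})$.

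For the remaining cases I would use that $\pi_{1}\colon J_{k,m}(\mathcal{O}_{K})\to J_{k,m}$ and $\pi_{1+i}\colon J_{k,m}(\mathcal{O}_{K})\to J_{k,2m}$ are ring homomorphisms fixing $M_{*}$ pointwise, sending $\phi_{4,1}^{a}\phi_{8,1}^{b}\phi_{12,1}^{c}$ to $u_{4}^{a}u_{8}^{b}u_{12}^{c}$, respectively $v_{4}^{a}v_{8}^{b}v_{12}^{c}$, where $u_{j}:=\pi_{1}\phi_{j,1}$ and $v_{j}:=\pi_{1+i}\phi_{j,1}$ are classical Jacobi forms that can be computed from Sasaki's generators using only finitely many Taylor coefficients at $z_{1}=z_{2}=0$ (for instance $\phi_{4,1}=2E_{4}+\pi i\,E_{4}^{\prime}\,z_{1}z_{2}+\cdots$ pins down $u_{4}$ and $v_{4}$ to the order needed). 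A hypothetical $M_{*}$-linear relation among the index-$m$ Hermitian monomials then forces, simultaneously, an $M_{*}$-linear relation among the $u$-monomials in $J_{*,m}$ and among the $v$-monomials in $J_{*,2m}$; for $m\le 2$, \thmref{0mod4ind2} and its analogues make the converse true as well, so the problem becomes the assertion that two explicitly described $M_{*}$-submodules of free $M_{*}$-modules of classical Jacobi forms intersect only in $0$ — a rank count carried out case by case on $k\bmod 12$ against the dimension formulas already invoked in \S\ref{index2}.

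I expect the genuine difficulty to be the passage beyond index $2$. For $m\ge 3$ the $\binom{m+2}{2}$ products no longer fill out $J_{4*,m}(\mathcal{O}_{K})$, whose rank $m^{2}+2$ is strictly larger, and \S\ref{index2} supplies joint injectivity of $\pi_{1}\times\pi_{1+i}$ only for indices $\le 2$, so relations cannot simply be carried to the classical side. Here I would either invoke an injectivity statement for the restriction maps in higher index (one of the related questions raised in the final section) or argue directly: write an index-$(m+1)$ relation as $\phi_{4,1}P+\phi_{8,1}Q+\phi_{12,1}R=0$ with $P,Q,R$ in the index-$m$ part of the subring and run an induction, or set up a Jacobian-type criterion from the two commuting $M_{*}$-derivations $\partial/\partial z_{1}$ and $\partial/\partial z_{2}$ together with a Serre-type differentiation coming from the developing isomorphism $\xi$ of \cite{sasaki}, arranging that a nonzero relation would contradict either the algebraic independence of $E_{4}$ and $\Delta$ in $M_{*}$ or the fact that the three forms are already an $M_{*}$-basis in index $1$.
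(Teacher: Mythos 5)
There is a genuine gap: your argument stops exactly where the real work begins. The reduction to linear independence of the $\binom{m+2}{2}$ index-$m$ monomials over $M_{*}$ is correct, and pushing relations into classical Jacobi forms via the multiplicative restriction maps is the right instinct, but your mechanism only functions for $m\le 2$, where $\pi_{1}\times\pi_{1+i}$ is known to be injective. For $m\ge 3$ you concede that the products no longer span and that no injectivity statement is available (indeed the paper lists it as an open question in the final section), and the three alternatives you offer — a hoped-for higher-index injectivity, an unexecuted induction on $\phi_{4,1}P+\phi_{8,1}Q+\phi_{12,1}R=0$, a ``Jacobian-type criterion'' — are not carried out and none is obviously salvageable. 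In particular, from a relation $\phi_{4,1}P+\phi_{8,1}Q+\phi_{12,1}R=0$ you cannot conclude $P=Q=R=0$ without already knowing something like regularity of this sequence, which is essentially what is to be proved. So the proposal proves the statement only for $m\le 2$ (and even there leaves the rank computation as a sketch), not algebraic independence.

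The paper's proof supplies the missing idea, and it is worth internalizing: change generators. Set $\psi_{8,1}=E_{4}\phi_{4,1}-\phi_{8,1}$, $\psi_{12,1}=E_{4}\phi_{8,1}-\phi_{12,1}$, $\tilde{\psi}_{16,1}=5E_{4}\psi_{12,1}-3\psi_{16,1}$; algebraic independence of $\phi_{4,1},\phi_{8,1},\phi_{12,1}$ is equivalent to that of $\psi_{8,1},\tilde{\psi}_{16,1},\psi_{16,1}$. These are rigged (Lemma~\ref{res1}) so that $\pi_{1}\psi_{8,1}=0$, while $\pi_{1}\tilde{\psi}_{16,1}\neq 0$ with $D_{0}\,\pi_{1}\tilde{\psi}_{16,1}=0$ and $D_{0}\,\pi_{1}\psi_{16,1}\neq 0$. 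Applying $\pi_{1}$ to a minimal-degree homogeneous relation kills every monomial containing $\psi_{8,1}$, leaving a two-generator relation in classical Jacobi forms that the standard Eichler--Zagier $D_{0}$ argument annihilates; the surviving relation is divisible by $\psi_{8,1}$ (the bigraded ring is an integral domain), contradicting minimality. This single inductive loop handles all $m$ at once and never needs injectivity of any restriction map beyond the computation of three specific images; it is the step your proposal lacks.
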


\begin{proof}

It is enough to prove the algebraic independence of $ \psi_{8,1}, \tilde{\psi}_{16,1}, \psi_{16,1}$, where $\psi_{8,1} = E_{4} \phi_{4,1} - \phi_{8,1}, \, \psi_{12,1} = E_{4} \phi_{8,1} - \phi_{12,1}, \, \tilde{\psi}_{16,1} = 5E_{4} \psi_{12,1} - 3 \psi_{16,1} $; $\psi_{8,1}, \psi_{12,1}, \psi_{16,1}$ being the generators of $J_{4*,1}^{cusp}(\mathcal{O}_{K})$ over $M_{*}$ (see \cite{sasaki} for their definition).

Let $f(X,Y,Z) =  \underset{a+b+c = m} \sum Q_{a,b,c} X^{a} Y^{b} Z^{c}$ be a homogeneous polynomial over $M_{*}$ of least degree $m$ such that $f(\psi_{8,1},\tilde{\psi}_{16,1},\psi_{16,1}) = 0$. Applying the map $\pi_{1}$ in the above relation we get \[ \underset{b+c = m} \sum Q_{0,b,c} \, (\pi_{1}\tilde{\psi}_{16,1})^{b} \, (\pi_{1} \psi_{16,1})^{c} = 0, \qq \mbox{ since } \pi_{1} \psi_{8,1} = 0. \]

From \lemref{res1} $\pi_{1}\tilde{\psi}_{16,1} \neq 0, \, D_{0} \, \pi_{1}\tilde{\psi}_{16,1} = 0, \, D_{0} \, \pi_{1} \psi_{16,1} \neq 0$. Hence, the argument in \cite[p.90]{zagier} for classical Jacobi forms applies, showing $Q_{0,b,c} = 0$ for all $b,c$ such that $b+c = m$. Hence we have \[ \underset{a+b+c = m, \, a \geq 1} \sum Q_{a,b,c} \, \psi_{8,1}^{a} \, \tilde{\psi}_{16,1}^{b} \,  \psi_{16,1}^{c}  = 0 , \] 
giving an equation of lower degree. Hence the Proposition is proved.
\end{proof}

\begin{lem} \label{res1}

(i) $\psi_{16,1} = - 2^{8} \Delta \phi_{4,1} \, + \, 2 E_{4}^{2} \phi_{8,1} \, - \, E_{4} \phi_{12,1}$.

(ii) $D_{0} \psi_{16,1} = -5 \cdot 2^{8} \Delta E_{4}$, \q $D_{0} \psi_{12,1} = -3 \cdot 2^{8} \Delta$.

(iii) $\pi_{1} \tilde{\psi}_{16,1} = (  \frac{2}{9} E_{4}^{3} \, + \, 3 \cdot 2^{9} \Delta ) \, E_{4,1} + \frac{8}{9} E_{4} E_{6} \, E_{6,1} $; $E_{4,1} , E_{6,1}$ being the normalised Jacobi Eisenstein series, which are a basis of $J_{2*,1}$ over $M_{*}$. $\Delta = q \prod_{n=1}^{\infty} ( 1 - q^{n})^{24}$ is the Discriminant cusp form of weight $12$.
\end{lem}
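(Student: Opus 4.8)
The plan is to establish the three assertions in order, since (ii) uses (i) and (iii) is essentially independent. For part (i), I would recall that $\psi_{16,1}$ is by definition the generator of $J_{4*,1}^{cusp}(\mathcal{O}_{K})$ in weight $16$, and that $\phi_{4,1}, \phi_{8,1}, \phi_{12,1}$ form an $M_*$-basis of $J_{4*,1}(\mathcal{O}_K)$ (with $M_*$ acting through $\mbb{C}[E_4,E_6^2]$ as in Remark~\ref{moduledef}). The space $J_{16,1}(\mathcal{O}_K)$ is spanned over $\mbb{C}$ by $\Delta\phi_{4,1}$, $E_4^2\phi_{8,1}$ (equivalently $E_4 E_6^2 \cdot (\text{lower})$... but since only even powers of $E_6$ enter, $E_4^3\phi_{4,1}$ and $E_4^2\phi_{8,1}$ and $E_4\phi_{12,1}$ together with $\Delta\phi_{4,1}$) — so it suffices to pin down the three coefficients by comparing a few Fourier (or Taylor) coefficients. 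Concretely, I would compute $D_0$ (the $z_1=z_2=0$ restriction) and $\xi_{1,1}$, $\xi_{2,2}$ of both sides using the explicit Taylor data for $\phi_{4,1}$ from \cite{sasaki} (and the known expansions of $\phi_{8,1}$, $\phi_{12,1}$), then solve the resulting linear system in three unknowns; the values $-2^8$, $2$, $-1$ should drop out. Alternatively one invokes the isomorphism $\xi$ of \eqref{sasakikmod4} and matches components in $M_{16}\oplus S_{18}\oplus S_{20}$.

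For part (ii), with (i) in hand, apply $D_0$ to the identity of (i): since $D_0$ is the restriction-to-$z=0$ map and $D_0\phi_{4,1}=2E_4$, $D_0\phi_{8,1}=2E_8=2E_4^2$, $D_0\phi_{12,1}=2E_{12}$ (the normalizations from \cite{sasaki}; here I would use that these $\phi$'s are normalized so their Eisenstein parts restrict to $2E_k$, or carry the correct constants through), one gets $D_0\psi_{16,1} = -2^8\Delta\cdot 2E_4 + 2E_4^2\cdot 2E_4^2 - E_4\cdot 2E_{12}$, and then the identity $2E_4^4 - 2E_4 E_{12} = \text{const}\cdot \Delta E_4$ (from $E_{12} = E_4^3 + c\,\Delta$ for the appropriate $c$) collapses the right side to $-5\cdot 2^8\Delta E_4$. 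The formula $D_0\psi_{12,1}=-3\cdot 2^8\Delta$ is obtained the same way, either from the analogous weight-$12$ expression $\psi_{12,1}=E_4\phi_{8,1}-\phi_{12,1}$ directly (given in the proof of Proposition~\ref{halfgen}) applying $D_0$, using $D_0(E_4\phi_{8,1}-\phi_{12,1}) = 2E_4^3 - 2E_{12}$ and $E_{12}-E_4^3 = $ (const)$\Delta$.

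For part (iii), the target lives in $J_{2*,1}$, so I would use that $\pi_1\colon J_{16,1}(\mathcal{O}_K)\to J_{16,1}$ lands in the classical Jacobi forms of weight $16$ index $1$, which is free of rank $2$ over $M_*^{(\text{full})}=\mbb{C}[E_4,E_6]$ on $E_{4,1},E_{6,1}$; hence $\pi_1\tilde\psi_{16,1} = A\, E_{4,1} + B\, E_{6,1}$ for unique $A\in M_{12}$, $B\in M_{10}$. To find $A,B$ I would compute $D_0$ and $D_2$ of $\pi_1\tilde\psi_{16,1}$: using $\pi_1\circ(\text{lift}) $ commutes with Taylor expansion (as in Lemma~\ref{comm} and the computation in Step 2 of Theorem~\ref{0mod4ind2}), $D_0\pi_1\tilde\psi_{16,1}$ and $D_2\pi_1\tilde\psi_{16,1}$ are explicit combinations of the $\chi_{\alpha,\beta}$ of $\tilde\psi_{16,1}$, which in turn are read off from $\tilde\psi_{16,1}=5E_4\psi_{12,1}-3\psi_{16,1}$ together with (i), (ii) and the Taylor data of $\psi_{12,1}$. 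Matching against $D_0 E_{4,1}, D_0 E_{6,1}$ (which are $E_4$-type and $E_6$-type constants) and $D_2 E_{4,1}, D_2 E_{6,1}\in S_{18}$ (both proportional to the unique normalized cusp form, or zero) gives two linear equations determining $A = \tfrac29 E_4^3 + 3\cdot 2^9\Delta$ and $B = \tfrac89 E_4 E_6$.

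The main obstacle is bookkeeping of normalization constants: the statement's precise rational coefficients ($\tfrac29$, $\tfrac89$, $3\cdot 2^9$, $-5\cdot 2^8$, etc.) are sensitive to exactly how $\phi_{4,1},\phi_{8,1},\phi_{12,1}$ and $E_{4,1},E_{6,1}$ are normalized in \cite{sasaki} and \cite{zagier}, and to the normalization of $D_2$. So the real work is carefully transcribing the Taylor/Fourier expansions of the basis forms from the references and propagating them through $\pi_1$, $D_0$, $D_2$ without constant slips; once the expansions are in hand, each part reduces to solving a small linear system, and the identities among $E_4,E_6,E_8,E_{12},\Delta$ (e.g. $E_8=E_4^2$, $E_{12}=E_4^3 + c\Delta$) finish it.
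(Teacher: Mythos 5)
Your overall framework---reduce each identity to finitely many linear conditions via the structure maps ($\xi$ from \eqref{sasakikmod4} for part (i), $D_{0}\oplus D_{2}$ on $J_{16,1}$ for part (iii)) and then feed in explicit expansions of the basis forms---is a legitimate route, and it differs in flavour from what the paper does: the paper substitutes the theta decompositions of $\phi_{4,1},\phi_{8,1},\phi_{12,1}$ from Sasaki directly and evaluates everything at the origin through the theta-constant relations $\theta^{H}_{0,0}(\tau)=\tfrac12(x^{2}+y^{2})$, $\theta^{H}_{0,1}=\theta^{H}_{1,0}=\tfrac12 z^{2}$, $\theta^{H}_{1,1}=\tfrac12(x^{2}-y^{2})$, and then omits the arithmetic. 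In terms of what is actually written down, the two arguments are comparable: both are recipes plus a pointer to Sasaki's explicit data.

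There is, however, a concrete step in your plan that fails. For part (ii) you propose to use $D_{0}\phi_{12,1}=2E_{12}$, so that $D_{0}\psi_{12,1}=2E_{4}^{3}-2E_{12}$ collapses via $E_{12}-E_{4}^{3}=c\,\Delta$. But $E_{12}=\tfrac{1}{691}(441E_{4}^{3}+250E_{6}^{2})$, so $2E_{4}^{3}-2E_{12}=\tfrac{2\cdot 250\cdot 1728}{691}\Delta=\tfrac{864000}{691}\Delta$, which is not an integral multiple of $\Delta$ and in particular cannot equal $-3\cdot 2^{8}\Delta$; the stated value instead forces $D_{0}\phi_{12,1}=2E_{4}^{3}+3\cdot 2^{8}\Delta$ (given $D_{0}\phi_{4,1}=2E_{4}$ from Lemma~\ref{4,2} and $D_{0}\phi_{8,1}=2E_{4}^{2}$, which follows from $\pi_{1}\psi_{8,1}=0$). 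So the Eisenstein-normalization shortcut is not available, and carrying out your part (ii) as written would produce an answer contradicting the lemma. The same caution governs parts (i) and (iii): since the entire content of this lemma is the exact rational constants, ``solve a small linear system'' only becomes a proof once the actual theta decompositions (equivalently the Taylor data) of $\phi_{8,1}$ and $\phi_{12,1}$ have been transcribed from Sasaki and substituted---which is precisely the computation the paper performs via the theta-constant relations. Until that input is supplied, the three identities remain unverified.
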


\begin{proof}
The calculations follow from the Theta decompositions of $\phi_{4i,1}$ ($i= 1,2,3$) given in \cite{sasaki} and using the Theta relations (see \cite{mumford}) : Let $ \theta_{s}^{H}(\tau) := \theta_{0,0}^{H} (\tau,0,0)$ ($s \in \mc{S}_{1}$). Then we have \[ \theta_{0,0}^{H}(\tau) = \frac{1}{2} (x^{2} + y^{2}), \, \theta_{0,1}^{H}(\tau) = \theta_{1,0}^{H}(\tau) = \frac{1}{2} z^{2} , \, \theta_{1,1}^{H}(\tau) = \frac{1}{2} (x^{2} - y^{2}); \] where \[ x = \underset{n \in \mbb{Z}}\sum e\left( \frac{n^{2} \tau}{2} \right), \q y = \underset{n \in \mbb{Z}}\sum (-1)^{n} e\left ( \frac{n^{2} \tau}{2} \right), \q z = \underset{t \in \frac{1}{2} + \mbb{Z}}\sum e\left( \frac{t^{2} \tau}{2} \right) \] are the ``Theta constants''. We omit the calculations.
\end{proof}

\section{Further questions and remarks} \label{qs}
\begin{enumerate}
\item
The restriction maps that we use in this paper do not commute with Hecke operators. Nevertheless it is expected that the following should be true. There should exist finitely many algebraic integers $\rho_{j} \in \mc{O}_{K}, \, 1 \leq j \leq n$ (where $n$ depends only on the index $m$) such that we have an embedding/isomorphism :
\[ \pi_{\rho_{1}} \times \cdots \times \pi_{\rho_{n}} \colon J_{k,m}(\mathcal{O}_{K}) \hookrightarrow J_{k,m N(\rho_{1})} \times \cdots \times J_{k,m N(\rho_{n})} \]
where $N : K \rightarrow \mbb{Q}$ is the norm map. From the results of this paper (see Theorem~\ref{2mod4ind2}, Proposition~\ref{orderofvanish}) this is true for $m=1,2$ and these cases suggest that $\rho_{j}$ and $n$ above should be related to the decomposition of $m$ in $\mc{O}_{K}$.

\item
We know that for $k \equiv 0 \pmod{4}$, $\dim{ J_{k,2}(\mathcal{O}_{K}) } = \frac{k}{2} = 2 ( \dim{ M_{k-4} } + \dim{ M_{k-8} } + \dim{ M_{k-12} })$. This suggests what the minimal weights of the $6$ generators of $J_{4*,2}(\mathcal{O}_{K})$ over $M_{*}$ should be, but the calculations seem to be much more than that in the case of classical Jacobi forms.

\item
It would be interesting to write down $m^{2} +2$ forms in $J_{4*,m}(\mathcal{O}_{K})$ linearly independent over $M_{*}$. We have $\binom{m+2}{2}$ such forms from Proposition~\ref{halfgen}. Since $m^{2} +2 - \binom{m+2}{2} = \binom{m-1}{2}$, we ask the following question. Let $A := \phi_{4,1}$, $B := \phi_{8,1}$, $C := \phi_{12,1}$. Does there exist a form $\phi_{3}$ in $J_{k,3}(\mathcal{O}_{K})$ (for suitable $k$) such that the set \[ \left\{ A^{a} \, B^{b} \, C^{c}  \right\}_{a+b+c = m} \cup \left\{ \phi_{3} \, A^{a} \, B^{b} \, C^{c}  \right\}_{a+b+c = m-3} \]
consists of $m^{2} +2$ linearly independent forms (over $M_{*}$) in $J_{4*,m}(\mathcal{O}_{K}) ? $ 
\end{enumerate}

\section{\textit{Acknowledgements}} The author wishes to thank Prof. B. Ramakrishnan for his support and encouragement and Prof. R. Sasaki for providing the paper \cite{sasaki}.

\end{document}